\newtheorem{theorem}{Theorem}
\newtheorem{corollary}{Corollary}
\newtheorem{lemma}{Lemma}
\newtheorem{proposition}{Proposition}
\begin{document}

\title{Generation and New Infinite Families of \texorpdfstring{$K_2$}{K2}-hypohamiltonian Graphs}

\author{{\sc Jan GOEDGEBEUR\footnote{Department of Computer Science, KU Leuven Campus Kulak-Kortrijk, 8500 Kortrijk, Belgium}\;\footnote{Department of Applied Mathematics, Computer Science and Statistics, Ghent University, 9000 Ghent, Belgium}}\;,
Jarne RENDERS\footnotemark[1]\;,
and {\sc Carol T. ZAMFIRESCU\footnotemark[2]\;\footnote{Department of Mathematics, Babe\c{s}-Bolyai University, Cluj-Napoca, Roumania}}\;\footnote{E-mail addresses: jan.goedgebeur@kuleuven.be; jarne.renders@kuleuven.be; czamfirescu@gmail.com}}

\date{}

\maketitle

\begin{center}
\begin{minipage}{125mm}
{\bf Abstract.} We present an algorithm which can generate all pairwise non-isomorphic \emph{$K_2$-hypohamiltonian graphs}, i.e.\ non-hamiltonian graphs in which the removal of any pair of adjacent vertices yields a hamiltonian graph, of a given order. We introduce new bounding criteria specifically designed for $K_2$-hypohamiltonian graphs, allowing us to improve upon earlier computational results. Specifically, we characterise the orders for which $K_2$-hypohamiltonian graphs exist and improve existing lower bounds on the orders of the smallest planar and the smallest bipartite $K_2$-hypohamiltonian graphs. Furthermore, we describe a new operation for creating $K_2$-hypohamiltonian graphs that preserves planarity under certain conditions and use it to prove the existence of a planar $K_2$-hypohamiltonian graph of order $n$ for every integer $n\geq 134$. Additionally, motivated by a theorem of Thomassen on hypohamiltonian graphs, we show the existence $K_2$-hypohamiltonian graphs with large maximum degree and size.

\bigskip

{\bf Keywords. }{Hamiltonian Cycle, Hypohamiltonian, Exhaustive Generation, Planar, Infinite Family, Maximum Degree}

\bigskip

{\textbf{MSC 2020. }05C38, 05C45, 05C85, 05C30, 05C76, 05C10}

\end{minipage}
\end{center}

\section{Introduction}

A graph is called \textit{hypohamiltonian} if it is non-hamiltonian and the removal of any vertex leads to a hamiltonian graph. We call a graph \textit{$K_2$-hamiltonian} if the removal of any pair of adjacent vertices leads to a hamiltonian graph. If it is also non-hamiltonian, it is a \textit{$K_2$-hypohamiltonian} graph. In this article we will study $K_2$-hypohamiltonian graphs. We continue work from~\cite{GRWZ22}; for motivation of these concepts, we refer to the introduction and references given in a paper by the third author~\cite{Za21}.

Historically, in the hypohamiltonian case, it was of great interest to investigate the existence of hypohamiltonian graphs for a given order. For instance, in 1966 Herz, Duby and Vigué used a computer search~\cite{HDV66} to prove that there are no hypohamiltonian graphs of orders $11$ and $12$, and in 1967 Lindgren showed the existence of an infinite family of hypohamiltonian graphs~\cite{Li67}. Aldred, McKay and Wormald completed the characterisation of the orders for which hypohamiltonian graphs exist in 1997~\cite{AMW97}, demonstrating the absence of a hypohamiltonian graph of order $17$. This proved that there exists a hypohamiltonian graph of order $n$ if and only if $n\in\{10,13,15,16\}$, or $n$ is an integer greater than or equal to $18$. A historical overview including the remaining orders was given by Holton and Sheehan in~\cite{HS93}.

In~\cite{GRWZ22}, using mathematical constructions as well as a computational filter method, we showed that the Petersen graph is the smallest $K_2$-hypohamiltonian graph. Furthermore, we were able to prove the existence of $K_2$-hypohamiltonian graphs on orders $10,13,15,16$, and from $18$ onwards, as well as their non-existence up to order $9$ and for orders $11$ and $12$. However, whether such graphs exist for orders $14$ and $17$ remained unknown, and the question of the existence of $K_2$-hypohamiltonian graphs with girth $3$ and $4$ persisted.

To address these questions, we describe a specialised generator that exhaustively generates all pairwise non-isomorphic $K_2$-hypohamiltonian graphs of a given order. We used this to determine whether or not $K_2$-hypohamiltonian graphs exist on these two open orders, to increase the known lower bounds on the order of the smallest $K_2$-hypohamiltonian graphs with girth $3$ or girth $4$ and to exhaustively determine which graphs are $K_2$-hypohamiltonian up to higher orders. We did this by proving that there are several \emph{obstructions} for $K_2$-hypohamiltonicity, i.e.\ properties that cannot hold for $K_2$-hypohamiltonian graphs. We then used these obstructions as bounding criteria in the generation algorithm. This method is inspired by a generator for hypohamiltonian graphs showcased in~\cite{GZ17}, which was later also extended for generating \emph{platypus graphs}~\cite{GNZ20} (i.e.\ non-hamiltonian graphs for which every vertex-deleted subgraph is traceable).

Currently, the order of a smallest planar $K_2$-hypohamiltonian graph is not known. This is also the case for the smallest planar hypohamiltonian graph. However, results by Aldred, McKay and Wormald~\cite{AMW97} implied a lower bound of $18$ for the latter. This was later sharpened to $23$ in 2017~\cite{GZ17}. Restricting our generator to the class of planar graphs we increase the lower bound from~\cite{GRWZ22} for the order of a smallest planar $K_2$-hypohamiltonian graph from $18$ to $24$. The smallest known planar $K_2$-hypohamiltonian graph was given in the same paper and has $48$ vertices. 

In the hypohamiltonian case bipartite graphs cannot exist. However, a question by Gr\"otschel \cite[Problem 4.56]{GGL95} asked whether there exist bipartite \emph{hypotraceable graphs}, i.e.\ graphs for which every vertex-deleted subgraph has a hamiltonian path, but which do not contain one themselves. We study here the existence of bipartite $K_2$-hypohamiltonian graphs. In~\cite{GRWZ22} a lower bound of $17$ on the order of a smallest bipartite $K_1$-hypohamiltonian graph was given. Similarly as in the planar case, by restricting the generation to bipartite graphs we increase this lower bound to $30$. However, the existence of bipartite $K_2$-hypohamiltonian graphs is not known.

In Section~\ref{sec:algorithm} we present the exhaustive generation algorithm and the obstructions for $K_2$-hypohamiltonian graphs and prove the correctness of the generator.
In Section~\ref{sec:operation} we introduce new ways of constructing infinite families of $K_2$-hypohamiltonian graphs. In particular, we introduce an operation for the construction of $K_2$-hypohamiltonian graphs, which are not hypohamiltonian, and which preserve planarity under certain conditions. We use this to classify the orders for which planar $K_2$-hypohamiltonian graphs exist, building on work from~\cite{GRWZ22}.

We also describe a new infinite family of hypohamiltonian $K_2$-hamiltonian graphs. Members of this family with $n$ vertices have a maximum degree of approximately $n/3$ and approximately $2n$ edges. It is of interest what the maximum degree and maximum number of edges in a $K_2$-hypohamiltonian graph can be, as was historically the case for hypohamiltonian graphs. For the hypohamiltonian case this question was answered by Thomassen~\cite{Th81} by describing a family of hypohamiltonian graphs attaining up to some constant the upper bounds on the maximum degree and size, which are $(n-3)/2$ and $n^2/4$ for a graph of order $n$, respectively. However, in the $K_2$-hypohamiltonian case no non-trivial upper bounds for the maximum degree and size are known.

Throughout this paper we assume all graphs to be simple, finite and connected unless mentioned otherwise. $K_2$-hypohamiltonian graphs are $3$-connected and hence have minimum degree at least $3$. We will use this fact tacitly throughout the paper. A path with end-vertex $v$ is a \emph{$v$-path}, and a 
 $v$-path with end-vertex $w \ne v$ is a \emph{$vw$-path}. We denote the complement of a graph $G$ by $G^c$, and by $N_G[v]$ the subset of $V(G)$ containing $v$ and all its neighbours in a graph $G$. We will write $N[v]$ if it is clear we are referring to a subset of $V(G)$.
 
\section{Generating \texorpdfstring{\boldmath{$K_2$}}{K2}-hypohamiltonian graphs}
\label{sec:algorithm}

In this section, we present our algorithm to generate all pairwise non-isomorphic $K_2$-hypohamiltonian graphs of a given order. The fundamental idea behind the algorithm is based on the generator for hypohamiltonian graphs from~\cite{GZ17}, which in turn is based on work by Aldred, McKay and Wormald~\cite{AMW97}. However, we introduce essential new bounding criteria specifically designed for $K_2$-hypohamiltonian graphs which are vital for the efficiency of the algorithm and adapt the algorithm from~\cite{GZ17} for use in the $K_2$-hypohamiltonian case.

\subsection{Obstructions for \texorpdfstring{\boldmath{$K_2$}}{K2}-hypohamiltonicity}

We will start this section by presenting several lemmas on properties that hold true for $K_2$-(hypo)hamiltonian graphs. A selection of these will be used as bounding criteria in the algorithm to reduce the search space, i.e.\ if a graph does not satisfy one of the properties, it cannot be $K_2$-(hypo)hamiltonian. The selection of which obstructions or bounding criteria we apply is based on an experimental analysis as there is usually a trade-off between how quickly these conditions can be computed and how much of the search space they reduce.

In~\cite{AMW97} Aldred, McKay and Wormald used what they called type A, B and C obstructions. These are specific configurations that cannot occur in hypohamiltonian graphs. We will use a similar terminology, but note however that here they indicate obstructions for $K_2$-hypohamiltonicity instead of hypohamiltonicity. The statements of the obstructions and lemmas contain some subtle differences compared to the hypohamiltonian case. Moreover, we will introduce several lemmas specifically for $K_2$-hypohamiltonian graphs. While some of these are adaptations from~\cite{AMW97} and~\cite{GZ17}, we also introduce some new ones. The precise definition of these obstructions for $K_2$-hypohamiltonicity will be given later in this section.

A \emph{non-trivial partition} $(W,X)$ of a set $V$ is a pair of two disjoint subsets $W$ and $X$ of $V$ such that their union is $V$ and neither is empty. Let $G$ be a possibly disconnected graph. We denote by $p(G)$ the minimum number of pairwise disjoint paths needed to cover all the vertices of $G$, by $V_1(G)$ all vertices of degree $1$ in $G$ and by $I(G)$ the set of all isolated vertices and all isolated $K_2$'s of $G$, i.e.\ isolated edges with their endpoints. Define 
\[k(G) = \begin{cases*}
0 & if $G$ is empty,\\
\max{\left\{1, \left\lceil\dfrac{\lvert V_1\rvert}{2}\right\rceil\right\}} & if $I(G)=\emptyset$ but $G$ is not empty,\\
\lvert I(G)\rvert + k(G - I(G)) & else.
\end{cases*}\]

\begin{lemma}[Aldred, McKay, Wormald~\cite{AMW97}]\label{lem:kLessThanP}
	For a graph $G$, we have $k(G) \leq p(G)$.
\end{lemma}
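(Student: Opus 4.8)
The plan is to induct on the structure of $G$ following the three-case definition of $k(G)$. The base case is $G$ empty, where $k(G) = 0 = p(G)$ trivially. For the inductive argument I would treat the two remaining cases separately, and the whole point is that each clause in the definition of $k$ only ever decreases or matches the corresponding quantity for $p$.

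First, consider the case $I(G) = \emptyset$ but $G$ nonempty. Here I must show $\max\{1, \lceil |V_1(G)|/2 \rceil\} \le p(G)$. The bound $1 \le p(G)$ is immediate since $G$ is nonempty. For the other term, the key observation is that in any cover of $V(G)$ by $p(G)$ pairwise disjoint paths, each path has exactly two endpoints (or is a single vertex, which can only happen if that vertex is isolated — excluded here since $I(G)=\emptyset$, so actually one should be slightly careful: a degree-$1$ vertex could be an interior... no, a degree-$1$ vertex of $G$ must be an endpoint of whatever path covers it, because it has only one neighbour in $G$ and hence at most one neighbour can be used along the path). So every vertex of $V_1(G)$ is an endpoint of some path in the cover; since each path contributes at most two endpoints, we need at least $\lceil |V_1(G)|/2 \rceil$ paths, giving $\lceil |V_1(G)|/2 \rceil \le p(G)$. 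Combining, $k(G) \le p(G)$.

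Next, the recursive case $I(G) \ne \emptyset$, where $k(G) = |I(G)| + k(G - I(G))$. Each isolated vertex of $G$ must be covered by a path consisting of that vertex alone, and each isolated $K_2$ must be covered by a single path (its edge) — no path covering a vertex of an isolated component can reach outside that component. Hence any optimal path cover of $G$ restricts to a path cover of $G - I(G)$ using exactly $p(G) - |I(G)|$ paths, so $p(G - I(G)) \le p(G) - |I(G)|$; conversely one always has $p(G) \le |I(G)| + p(G-I(G))$, but only the first inequality is needed. By the induction hypothesis applied to $G - I(G)$ (which has strictly fewer vertices, or is empty), $k(G - I(G)) \le p(G - I(G)) \le p(G) - |I(G)|$, and rearranging gives $k(G) = |I(G)| + k(G-I(G)) \le p(G)$.

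The main obstacle — though it is more a matter of care than of depth — is the bookkeeping in the $I(G) = \emptyset$ case: one must make sure that a vertex of degree $1$ genuinely cannot be an internal vertex of a path in the cover, and handle the edge case where $p(G)$ counts a trivial one-vertex path (which cannot occur for a degree-$1$ vertex precisely because such a vertex is not isolated). Everything else is routine induction. Since this is a cited result (Aldred, McKay, Wormald), I would keep the write-up brief, essentially just recording the three cases above.
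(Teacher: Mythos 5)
The paper does not prove this lemma at all: it is quoted from Aldred, McKay and Wormald~\cite{AMW97} and used as a black box, so there is no in-paper argument to compare against. Your proof is correct and complete: the three cases track the definition of $k$, the observation that a degree-one vertex must be an endpoint of its covering path gives $p(G)\geq\lceil|V_1(G)|/2\rceil$ when $I(G)=\emptyset$, and in the recursive case the paths covering the isolated components cannot leave them, so $p(G-I(G))\leq p(G)-|I(G)|$ (here, as you implicitly and correctly assume, $|I(G)|$ counts components of $I(G)$ — isolated vertices and isolated $K_2$'s each once — rather than vertices, since otherwise an isolated $K_2$ would already violate the inequality). The only imprecision is the word ``exactly'' in ``restricts to a path cover of $G-I(G)$ using exactly $p(G)-|I(G)|$ paths'': an optimal cover could in principle waste an extra path on an isolated $K_2$, so you only get ``at most'', which is all you need.
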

Let $\mathfrak{p}$ be a path or cycle in $G$ and $S$ a subset of the vertices of $G$.  We define $c_{\mathfrak{p}}\vert_S$ to be the number of components of $\mathfrak{p}$ restricted to $G[S]$.
	\begin{lemma}\label{lem:typeA}
		Let $G$ be a $K_2$-hamiltonian graph and $(W,X)$ a non-trivial partition $V(G)$. If $G[X]$ contains adjacent vertices and $\lvert X\rvert \geq 3$, then 
		$$p(G[W]) < \lvert X \rvert - 1\quad\text{and}\quad k(G[W]) < \lvert X \rvert - 1.$$
	\end{lemma}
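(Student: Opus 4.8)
The plan is to exploit $K_2$-hamiltonicity directly: since $G[X]$ contains an edge, say $xy$, the graph $G - x - y$ is hamiltonian. I would fix a Hamiltonian cycle $H$ of $G - x - y$ and study how $H$ interacts with the partition. Write $X' = X \setminus \{x,y\}$, so $|X'| = |X| - 2 \ge 1$, and note that $V(G-x-y) = W \cup X'$ is partitioned by $(W, X')$. The restriction of $H$ to $G[W]$ is a disjoint union of paths (some possibly trivial) that together cover all of $W$; call this collection $\mathcal{P}$. The number of paths in $\mathcal{P}$ equals $c_H|_W$, the number of components of $H$ restricted to $G[W]$, and this is at most the number of "maximal $X'$-arcs" of $H$, i.e.\ at most $|X'| = |X|-2$. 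Actually one can do slightly better: because $x$ and $y$ were deleted but the cycle $H$ lives only on $W \cup X'$, each time $H$ leaves $W$ it must enter $X'$, so $c_H|_W \le c_H|_{X'} \le |X'| = |X| - 2$; and since $\mathcal P$ is a path cover of $G[W]$ we get $p(G[W]) \le c_H|_W \le |X|-2$.

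To sharpen this from $\le |X|-2$ to $< |X|-1$, which is the same inequality, I need to be careful that the bound I actually want is $p(G[W]) \le |X| - 2$. This already follows from the previous paragraph, so the first conclusion is in hand once the bookkeeping on $c_H|_W$ versus $c_H|_{X'}$ is done correctly (the point being that along the cyclic order of $H$, blocks of $W$-vertices and blocks of $X'$-vertices alternate, hence are equal in number, and there are at most $|X'|$ blocks of $X'$-vertices). The second conclusion, $k(G[W]) < |X|-1$, i.e.\ $k(G[W]) \le |X|-2$, then follows immediately from Lemma~\ref{lem:kLessThanP}, since $k(G[W]) \le p(G[W]) \le |X|-2$.

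The main obstacle I anticipate is not any deep idea but the precise combinatorial argument that the path cover of $G[W]$ induced by $H$ has at most $|X|-2$ parts, and in particular handling degenerate cases correctly: a "path" in the cover may be a single vertex; a block of $X'$-vertices on $H$ may be a single vertex; and one must make sure the edge $xy$ being in $G[X]$ (rather than merely $G[X]$ having some edge with both ends in $X'$) is used — indeed it is essential, since it is what licenses deleting the specific pair $x,y$ and still landing inside the partition. I would also double-check the hypothesis $|X| \ge 3$ is exactly what is needed so that $|X'| \ge 1$ and the bound $|X|-2$ is a positive integer, making the path cover statement non-vacuous. Once the alternation argument is stated cleanly, both inequalities drop out, with Lemma~\ref{lem:kLessThanP} supplying the second from the first.
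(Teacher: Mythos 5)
Your proposal is correct and follows essentially the same route as the paper's proof: delete the adjacent pair $x,y\in X$, take a hamiltonian cycle of $G-x-y$, use the alternation of $W$-blocks and $(X\setminus\{x,y\})$-blocks along the cycle to get $p(G[W])\leq c_{\mathfrak h}\vert_{X-x-y}\leq \lvert X\rvert-2$, and then invoke Lemma~\ref{lem:kLessThanP} for the bound on $k(G[W])$. No meaningful differences.
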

	\begin{proof}
		Let $v$ and $w$ be adjacent vertices of $G[X]$. Since $G$ is $K_2$-hamiltonian, there is a hamiltonian cycle $\mathfrak{h}$ in $G - v - w$. Then $c_{\mathfrak{h}}\vert_W$ must be at least $p(G[W])$, hence $c_{\mathfrak{h}}\vert_{X-v-w}$ must also be at least $p(G[W])$, since $c_{\mathfrak{h}}\vert_W = c_{\mathfrak{h}}\vert_{X-v -w}$. As $c_{\mathfrak{h}}\vert_{X-v -w}$ is at most $\lvert X \rvert - 2$, we get $$p(G[W])\leq c_{\mathfrak{h}}\vert_{X-v-w} \leq \lvert X\rvert - 2 < \lvert X\rvert - 1.$$ By Lemma~$\ref{lem:kLessThanP}$ the result follows.
	\end{proof}
Consider a graph $G$ containing a non-trivial partition $(W,X)$ of the vertices of $G$ such that $X$ has at least three vertices and contains a pair of adjacent vertices. If $p(G[W])\geq \lvert X \rvert - 1$, we call $(W,X)$ a \textit{type A obstruction} and if $k(G[W])\geq \lvert X \rvert - 1$, we call $(W,X)$ a \textit{type B obstruction}. For the efficiency of our algorithm, we only consider type A obstructions where $G[W]$ is a union of disjoint paths with the extra condition that none of these paths in $G[W]$ have adjacent endpoints. In this way computing $p(G[W])$ is equivalent to counting the degree $1$ vertices of $G[W]$, dividing this number by two and adding the number of degree $0$ vertices.
	
The case when $X$ does not contain adjacent vertices might be of theoretical interest. We explore this in the next lemma.

	\begin{lemma}\label{lem:nonAdjTypeAAndB}
		Let $G$ be a $K_2$-hamiltonian graph and $(W,X)$ a non-trivial partition of $V(G)$ with $\lvert X\rvert > 1$. If $G[X]$ does not contain adjacent vertices, then 
		$$p(G[W - w]) < \lvert X \rvert\quad\text{and}\quad k(G[W-w]) < \lvert X \rvert$$ for any $w\in W$ adjacent to a vertex in $X$.
	\end{lemma}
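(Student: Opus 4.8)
The plan is to mimic the proof of Lemma~\ref{lem:typeA}, but since $X$ has no adjacent pair, we cannot delete an edge inside $X$; instead we delete an edge joining $W$ to $X$. Concretely, fix $w \in W$ adjacent to some $x \in X$. Since $G$ is $K_2$-hamiltonian, there is a hamiltonian cycle $\mathfrak{h}$ in $G - w - x$. I would then count components of $\mathfrak{h}$ restricted to the two sides of the partition of $V(G) - w - x$ given by $W - w$ and $X - x$.

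First I would observe that, just as before, a hamiltonian cycle of a graph $H$ partitioned into $A \cup B$ satisfies $c_{\mathfrak{h}}|_A = c_{\mathfrak{h}}|_B$: walking around the cycle, each maximal run inside $A$ is flanked by runs inside $B$, so the numbers of runs coincide (the only degenerate case, $B = \emptyset$, does not arise here because $x \in X$ has a neighbour in $W$, so after deleting $w$ and $x$ the set $W - w$ is nonempty; and $X - x$ is nonempty or empty — if $X - x = \emptyset$ then $|X| = 1$, excluded by hypothesis, so in fact $X - x \ne \emptyset$). Applying this with $H = G - w - x$, $A = W - w$, $B = X - x$, I get $c_{\mathfrak{h}}|_{W-w} = c_{\mathfrak{h}}|_{X-x}$. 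Since $\mathfrak{h}$ restricted to $G[W-w]$ is a union of vertex-disjoint paths covering $V(G[W-w])$, its number of components is at least $p(G[W-w])$. Hence $p(G[W-w]) \le c_{\mathfrak{h}}|_{W-w} = c_{\mathfrak{h}}|_{X-x} \le |X-x| = |X| - 1 < |X|$. The bound on $k$ then follows immediately from Lemma~\ref{lem:kLessThanP}, which gives $k(G[W-w]) \le p(G[W-w]) < |X|$.

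The one point that needs a little care — and is the main obstacle, such as it is — is the boundary/degenerate bookkeeping: verifying that both sides of the restricted partition are nonempty so that the equality $c_{\mathfrak{h}}|_{W-w} = c_{\mathfrak{h}}|_{X-x}$ genuinely holds (it fails trivially if one side is empty), and checking that $c_{\mathfrak{h}}|_{X-x} \le |X-x|$ rather than $|X-x|-1$, since unlike in Lemma~\ref{lem:typeA} we only removed one vertex from $X$. This is exactly why the conclusion is the slightly weaker $< |X|$ instead of $< |X| - 1$. I would handle the emptiness of $X - x$ by noting $|X| > 1$ forces $|X - x| \ge 1$, and the emptiness of $W - w$ is impossible because otherwise $G - w - x$ would be empty yet must contain a hamiltonian cycle. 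With these checks in place the argument is a routine transcription of the previous proof.
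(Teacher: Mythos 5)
Your proof is correct and follows essentially the same route as the paper's: pick adjacent $w\in W$ and $x\in X$, take a hamiltonian cycle of $G-w-x$, equate the numbers of components of the cycle restricted to the two sides of the partition, and bound the $X$-side by $\lvert X\rvert-1$, finishing with Lemma~\ref{lem:kLessThanP}. (One small slip in your degenerate-case bookkeeping: if $W=\{w\}$ then $G-w-x=G[X-x]$ is nonempty but edgeless, not empty; either way that case is harmless, since $p$ and $k$ of the empty graph are $0<\lvert X\rvert$, which is how the paper dismisses it.)
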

	\begin{proof}
		Let $v,w$ be adjacent vertices of $G$ such that $v$ is in $X$ and $w$ in $W$. We can find such vertices since $G$ is connected. As $G$ is $K_2$-hamiltonian, we have a hamiltonian cycle $\mathfrak{h}$ in $G - v - w$. If $W$ contains only one element, we are done. Assume otherwise; then the number of components of $\mathfrak{h}$ restricted to $W$ is at least $p(G[W - w])$. Hence, the number of components of $\mathfrak{h}$ restricted to $X$ is at least $p(G[W - w])$. Since this number of components is at most $\lvert X\rvert - 1$, we are done by Lemma~\ref{lem:kLessThanP}.
	\end{proof}
	Note that adding a vertex to a subgraph of $G$ can only increase the number of pairwise disjoint paths needed to cover that subgraph by at most one. This yields the following corollary.
\begin{corollary}
	Under the conditions of Lemma~\ref{lem:nonAdjTypeAAndB}, we have $p(G[W]) < \lvert X \rvert + 1$.
\end{corollary}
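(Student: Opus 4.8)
The plan is to combine Lemma~\ref{lem:nonAdjTypeAAndB} with the elementary observation recorded immediately before the corollary, namely that adjoining a single vertex to a graph raises its path-cover number by at most one. So the corollary should follow in one line.

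First I would fix a suitable vertex $w$. Since $G$ is connected and $(W,X)$ is a non-trivial partition of $V(G)$, there is at least one edge of $G$ with one endpoint in $W$ and one in $X$; let $w \in W$ be such an endpoint. Then $w$ is a vertex of $W$ adjacent to a vertex in $X$, so Lemma~\ref{lem:nonAdjTypeAAndB} applies and yields $p(G[W-w]) < \lvert X \rvert$, that is, $p(G[W-w]) \le \lvert X \rvert - 1$.

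Next I would invoke the observation: given a minimum path cover of $G[W-w]$ by $p(G[W-w])$ pairwise disjoint paths, adjoining the trivial $w$-path consisting of the single vertex $w$ yields a cover of $G[W]$ by $p(G[W-w]) + 1$ pairwise disjoint paths. Hence $p(G[W]) \le p(G[W-w]) + 1 \le \lvert X \rvert < \lvert X \rvert + 1$, as claimed. In the degenerate case $W = \{w\}$, the graph $G[W-w]$ is empty, so $p(G[W-w]) = 0$ and $p(G[W]) = 1 \le \lvert X \rvert$ since $\lvert X \rvert > 1$, and the bound still holds.

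There is essentially no obstacle here: the statement is an immediate consequence of the preceding lemma and a triviality about path covers. The only point requiring a moment's care is the existence of the vertex $w$, which is guaranteed by the connectivity of $G$ together with the non-triviality of the partition $(W,X)$.
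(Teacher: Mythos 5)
Your proof is correct and follows exactly the route the paper intends: the corollary is stated immediately after the remark that adjoining one vertex raises the path-cover number by at most one, and your argument is just that remark combined with Lemma~\ref{lem:nonAdjTypeAAndB} applied to a vertex $w\in W$ with a neighbour in $X$ (which exists by connectivity). The extra care for the case $W=\{w\}$ is a harmless bonus.
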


\begin{lemma}\label{lem:typeC}
	Let $G$ be a $K_2$-hamiltonian graph and $(W,X)$ a non-trivial partition of the vertices of $G$, such that $W$ is an independent set. Suppose  $G[X]$ contains adjacent vertices $v$ and $w$. Let $n_1$ and $n_2$ be the number of vertices of $X - v - w$ joined to one or more than one vertex of $W$, respectively. Then $2n_2+n_1 \geq 2\lvert W \rvert$.
\end{lemma}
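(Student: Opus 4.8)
The plan is to argue exactly as in Lemmas~\ref{lem:typeA} and~\ref{lem:nonAdjTypeAAndB}, but to replace the ``counting components'' step by a direct count of edges. Since $v$ and $w$ are adjacent and $G$ is $K_2$-hamiltonian, I would fix a hamiltonian cycle $\mathfrak{h}$ of $G-v-w$; it spans $W\cup(X-v-w)$. The two things I would compare are the number $e$ of edges of $\mathfrak{h}$ having one endpoint in $W$ and the other in $X-v-w$, computed once from the $W$-side and once from the $(X-v-w)$-side.

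For the $W$-side count: because $W$ is independent in $G$ it is independent in $G-v-w$, so no edge of $\mathfrak{h}$ lies inside $W$; and since $v,w$ are not vertices of $G-v-w$, the two edges of $\mathfrak{h}$ incident to any vertex of $W$ must both cross into $X-v-w$. Hence each of the $\lvert W\rvert$ vertices of $W$ is incident to exactly two crossing edges, and (again because $W$ is independent) no crossing edge gets counted twice, so $e=2\lvert W\rvert$. For the $(X-v-w)$-side count: every vertex has degree $2$ in $\mathfrak{h}$, so a vertex $x\in X-v-w$ is incident in $\mathfrak{h}$ to at most $\min\{2,\,d\}$ crossing edges, where $d$ is the number of vertices of $W$ adjacent to $x$ in $G$. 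Thus a vertex of $X-v-w$ with no neighbour in $W$ contributes $0$ to $e$, one with exactly one neighbour in $W$ contributes at most $1$, and one with two or more neighbours in $W$ contributes at most $2$; summing over $X-v-w$ gives $e\le n_1+2n_2$. Combining the two counts yields $2\lvert W\rvert\le n_1+2n_2$, which is the claim.

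I do not expect a genuine obstacle here: the statement is a double-counting argument once the hamiltonian cycle of $G-v-w$ is in hand. The only point needing a little care is the bound on the $(X-v-w)$-side: one must use both that $\mathfrak{h}$ has maximum degree $2$ (to cap the contribution of a vertex with many neighbours in $W$ at $2$) and that a vertex adjacent to exactly one vertex of $W$ simply has no second edge available to $W$ (to cap its contribution at $1$), so that the total comes out as $n_1+2n_2$ rather than $2(n_1+n_2)$. It is also worth noting that, as in the earlier lemmas, independence of $W$ is used only to guarantee that the crossing edges at $W$-vertices are genuinely distinct and genuinely cross; everything else is the degree-$2$ structure of $\mathfrak{h}$.
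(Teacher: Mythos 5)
Your proof is correct and is exactly the paper's argument: fix a hamiltonian cycle of $G-v-w$ and double-count the edges between $W$ and $X-v-w$, getting exactly $2\lvert W\rvert$ from the $W$-side (using independence of $W$) and at most $n_1+2n_2$ from the $X$-side. You simply spell out the degree-$2$ bookkeeping that the paper leaves implicit.
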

\begin{proof}
	Let $\mathfrak{h}$ be a hamiltonian cycle in $G - v - w$. Since $W$ is an independent set, the number of edges of $\mathfrak{h}$ between $W$ and $X$ must be $2\lvert W \rvert$. On the other hand $X$ can supply at most $2n_2+n_1$ edges. The result follows.
\end{proof}
Let $G$ be a graph which is not $K_2$-hamiltonian and suppose that for a non-trivial partition $(W,X)$ of the vertices of $G$ the remaining assumptions of Lemma~\ref{lem:typeC} are met, but $2n_2+n_1 < 2\lvert W\rvert$ for some adjacent $v,w\in X$. Then we call $(W,X,vw)$ a \textit{type C obstruction}.

\begin{lemma}\label{lem:starObstruction}
	Let $G$ be a $K_2$-hypohamiltonian graph containing a triangle $uvw$. Then $u$ has at least two neighbours not in $N[v]\cup N[w]$.
\end{lemma}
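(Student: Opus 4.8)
The plan is to use that $G$ is $K_2$-hamiltonian for the adjacent pair $v,w$, and then inspect a hamiltonian cycle of $G-v-w$ near $u$. First I would invoke $K_2$-hamiltonicity to obtain a hamiltonian cycle $\mathfrak h$ of $G-v-w$. Since $u\in V(G)\setminus\{v,w\}=V(\mathfrak h)$ and $\mathfrak h$ is a cycle on at least three vertices, $u$ has exactly two neighbours on $\mathfrak h$, say $a$ and $b$; these are distinct, both are neighbours of $u$ in $G$, and (being vertices of $\mathfrak h$) both differ from $v$ and from $w$.

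The heart of the argument is the claim that neither $a$ nor $b$ belongs to $N[v]\cup N[w]$. Suppose for contradiction that $a\in N[v]\cup N[w]$; since $a\notin\{v,w\}$, this means $a$ is adjacent to $v$ or to $w$. If $a$ is adjacent to $v$, then $\bigl(E(\mathfrak h)\setminus\{ua\}\bigr)\cup\{uw,wv,va\}$ is the edge set of a hamiltonian cycle of $G$: the edges $uw$ and $wv$ are present because $uvw$ is a triangle, and $va$ by assumption. This contradicts that $G$ is non-hamiltonian. If instead $a$ is adjacent to $w$, the same works with $\{uv,vw,wa\}$ in place of $\{uw,wv,va\}$. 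Hence $a\notin N[v]\cup N[w]$, and by the identical argument $b\notin N[v]\cup N[w]$. As $a\ne b$, the vertex $u$ has the two neighbours $a$ and $b$ outside $N[v]\cup N[w]$, which is exactly the assertion.

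I do not expect a genuine obstacle: the only points needing care are that $a$ and $b$ are distinct and distinct from $v,w$ (immediate, since they are the two cycle-neighbours of $u$), and that the described rerouting really yields a spanning cycle of $G$ — which it does because $\mathfrak h-ua$ is a hamiltonian path of $G-v-w$ from $u$ to $a$, and we reconnect its two ends by a path from $u$ to $a$ through the two previously deleted vertices $w$ and $v$. In short, the whole proof amounts to the observation that a triangle on $u,v,w$ supplies precisely the two edges at $u$ needed to splice $v$ and $w$ back into a hamiltonian cycle of $G-v-w$ as soon as one of $u$'s cycle-neighbours is adjacent to $v$ or to $w$.
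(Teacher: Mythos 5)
Your proof is correct and uses the same core idea as the paper: take a hamiltonian cycle of $G-v-w$ and observe that if a cycle-neighbour of $u$ were adjacent to $v$ or $w$, one could splice $v$ and $w$ back in through the triangle edges at $u$ to obtain a hamiltonian cycle of $G$. The paper phrases this as a proof by contradiction (assuming $u$ has at most one neighbour outside $N[v]\cup N[w]$), whereas you argue directly that both cycle-neighbours of $u$ lie outside $N[v]\cup N[w]$; the two formulations are equivalent.
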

\begin{proof}
	Suppose that $u$ has at most one neighbour not in $N[v]\cup N[w]$. There exists a hamiltonian cycle in $G - v - w$. Note that this cycle does not use the edge $ux$, otherwise replacing $ux$ with $uvwx$, we would obtain a hamiltonian cycle in G. Hence, it must contain an edge $ux$ where $x$ is a neighbour of $u$ that also neighbours either $v$ or $w$ in $G$. However, we can then replace this edge by $xvwu$ or $xwvu$ which yields a hamiltonian cycle in $G$ which is a contradiction.
\end{proof}
Let $G$ be a graph and $u$ a vertex of $G$ lying on a triangle $uvw$ such that $u$ has at most one neighbour not in $N[v]\cup N[w]$. Then we say $(u, uvw)$ is a \emph{triangle obstruction} in $G$.
\begin{corollary}[\cite{Za21}]\label{lem:threeCycle}
	Let $G$ be a $K_2$-hypohamiltonian graph. The vertices of a triangle in $G$ have degree at least $4$.
\end{corollary}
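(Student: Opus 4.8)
The plan is to obtain this immediately from Lemma~\ref{lem:starObstruction} together with the symmetry of a triangle. Fix a triangle $uvw$ in $G$. It suffices to prove $\deg(u)\geq 4$, since the hypotheses of Lemma~\ref{lem:starObstruction} are symmetric in the three triangle vertices, so the same argument with $v$ or $w$ playing the role of $u$ then yields $\deg(v)\geq 4$ and $\deg(w)\geq 4$.

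First I would record that, because $uvw$ is a triangle, both $v$ and $w$ are neighbours of $u$, and moreover $v\in N[v]\cup N[w]$ and $w\in N[v]\cup N[w]$. Now apply Lemma~\ref{lem:starObstruction}: since $G$ is $K_2$-hypohamiltonian and contains the triangle $uvw$, the vertex $u$ has at least two neighbours, say $x_1$ and $x_2$, that do \emph{not} lie in $N[v]\cup N[w]$. As $v$ and $w$ do lie in $N[v]\cup N[w]$, the vertices $x_1,x_2$ are distinct from $v$ and $w$, so $\{v,w,x_1,x_2\}$ consists of four distinct neighbours of $u$. Hence $\deg(u)\geq 4$, and the symmetric argument finishes the proof.

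I do not anticipate any genuine obstacle: the only point requiring a moment's care is verifying that the two neighbours supplied by Lemma~\ref{lem:starObstruction} are really "new", i.e.\ different from $v$ and $w$, which is immediate from $v,w\in N[v]\cup N[w]$. Everything else is a one-line counting argument on top of the already-established lemma.
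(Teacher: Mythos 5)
Your proposal is correct and matches the paper's intent exactly: the statement is presented as an immediate corollary of Lemma~\ref{lem:starObstruction}, and your derivation (the two neighbours outside $N[v]\cup N[w]$ are necessarily distinct from $v$ and $w$, giving four neighbours of $u$, then symmetry) is precisely the intended one-line argument.
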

\begin{lemma}\label{lem:arrowObstruction}
	Let $G$ be a $K_2$-hypohamiltonian graph containing a $4$-cycle $uvwx$. Then $u$ has at least two neighbours not in $N[w]$. 
\end{lemma}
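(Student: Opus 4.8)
The plan is to argue by contradiction, mirroring the proof of Lemma~\ref{lem:starObstruction} and exploiting that in the $4$-cycle $uvwx$ the vertices $u$ and $w$ lie at distance two, so the detour $u\text{-}v\text{-}w$ can absorb \emph{both} deleted vertices. So suppose, for contradiction, that $u$ has at most one neighbour outside $N[w]$. Since $vw$ is an edge of the $4$-cycle and $G$ is $K_2$-hypohamiltonian, there is a hamiltonian cycle $\mathfrak{h}$ of $G-v-w$. This cycle meets $u$ in exactly two edges, say $ua$ and $ub$ with $a\neq b$; and since $v,w\notin V(G-v-w)$, we have $a,b\notin\{v,w\}$.

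The key step is to show that neither $a$ nor $b$ is adjacent to $w$ in $G$. Indeed, if, say, $wa\in E(G)$, then replacing the edge $ua$ in $\mathfrak{h}$ by the path $u\text{-}v\text{-}w\text{-}a$, all of whose edges $uv$, $vw$, $wa$ lie in $G$, produces a cycle of $G$: since $v$ and $w$ do not lie on $\mathfrak{h}$ while $u$ and $a$ do, this rerouting through the two new vertices $v,w$ yields a single cycle on all of $V(G)$, i.e.\ a hamiltonian cycle of $G$ — contradicting that $G$ is non-hamiltonian. (Note this also covers the possibility $a=x$, as $wx\in E(G)$.) The same argument applies to $b$.

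Consequently $a$ and $b$ are two distinct neighbours of $u$ with $a,b\notin\{v,w\}$ and neither adjacent to $w$, hence $a,b\notin N[w]$. This contradicts the assumption that $u$ has at most one neighbour outside $N[w]$, which proves the claim. (By the symmetry $v\leftrightarrow x$ of the $4$-cycle, one could equally delete $w$ and $x$ and reroute via $u\text{-}x\text{-}w$; either choice works.)

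I do not expect any genuine obstacle here: the only points requiring care are that the two $\mathfrak{h}$-edges at $u$ lead to distinct vertices, that neither of those vertices is among the deleted pair, and that the rerouting genuinely yields one hamiltonian cycle rather than a disjoint union — all of which are immediate. The argument is essentially the triangle/star-obstruction argument of Lemma~\ref{lem:starObstruction}, adapted to a $4$-cycle.
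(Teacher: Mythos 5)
Your proof is correct and is essentially the paper's own argument: the paper likewise takes a hamiltonian cycle of $G-v-w$, observes that under the assumption it must use an edge $uy$ with $y$ adjacent to $w$, and replaces $uy$ by the path $ywvu$ to obtain a hamiltonian cycle of $G$. Your presentation merely runs the same rerouting on both cycle-edges at $u$ to conclude directly that both cycle-neighbours of $u$ lie outside $N[w]$.
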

\begin{proof}
	Assume that $u$ has at most one neighbour not in $N[w]$. There exists a hamiltonian cycle in $G - v - w$ and it must contain the edge $uy$ where $y$ is a neighbour of $u$ adjacent to $w$ in $G$. Replacing this edge with $ywvu$ yields a hamiltonian cycle in $G$, which is a contradiction.
\end{proof}
Suppose that $u$ belongs to a $4$-cycle $uvwx$ such that $u$ has at most one neighbour which is not $w$ and which is non-adjacent to $w$. Then we say $(u, uvwx)$ is a \emph{$4$-cycle obstruction}.
\begin{corollary}[\cite{Za21}]\label{lem:fourCycle}
	Let $G$ be a $K_2$-hypohamiltonian graph. The vertices of a $4$-cycle in $G$ have degree at least $4$.
\end{corollary}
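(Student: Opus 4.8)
The plan is to read this off directly from Lemma~\ref{lem:arrowObstruction}, using the standing fact that a $K_2$-hypohamiltonian graph has minimum degree at least $3$. First I would fix a $4$-cycle $uvwx$ in $G$ and show that each of its four vertices has degree at least $4$; by the cyclic symmetry of the cycle it suffices to treat a single vertex, say $u$. Suppose towards a contradiction that $\deg_G(u) \leq 3$. Since the minimum degree is $3$, in fact $\deg_G(u) = 3$, so $u$ has exactly one neighbour besides $v$ and $x$. Now $v$ and $x$ are the two cycle-neighbours of $w$, so $v, x \in N[w]$, and hence $u$ has at most one neighbour outside $N[w]$. This contradicts Lemma~\ref{lem:arrowObstruction}, which applied to the $4$-cycle $uvwx$ guarantees that $u$ has at least two neighbours not in $N[w]$. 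Running the same argument for $v$, $w$, $x$ (reading the cycle as $vwxu$, $wxuv$, $xuvw$, so that the relevant excluded closed neighbourhoods are $N[x]$, $N[u]$, $N[v]$ respectively) yields that all four vertices of the $4$-cycle have degree at least $4$.

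I do not anticipate any genuine obstacle: the statement is an essentially immediate consequence of Lemma~\ref{lem:arrowObstruction}, with no case analysis or computation required. The only point that deserves a moment's attention is verifying that the two cycle-neighbours of the vertex under consideration indeed lie in the closed neighbourhood that the lemma excludes; this holds automatically because in a $4$-cycle the vertex ``opposite'' a given vertex is adjacent to both of that vertex's cycle-neighbours.
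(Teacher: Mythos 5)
Your proposal is correct and is exactly the intended derivation: the paper states this as an immediate corollary of Lemma~\ref{lem:arrowObstruction} (citing~\cite{Za21}) without writing out the argument, and your observation that the two cycle-neighbours of $u$ lie in $N[w]$, so a degree-$3$ vertex would have at most one neighbour outside $N[w]$, is precisely the step being left implicit. The cyclic-symmetry remark handling the other three vertices is also fine.
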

A \textit{diamond} is a $K_4$ from which we remove one edge. Its \textit{central edge} is the edge between the two cubic vertices.
\begin{corollary}\label{lem:diamond}
	Let $G$ be a $K_2$-hypohamiltonian graph containing a diamond with vertices $a,b,c,d$ and central edge $ac$. Then the degrees of $a$ and $c$ (in $G$) are at least $5$.
\end{corollary}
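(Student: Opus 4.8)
The plan is to apply the triangle obstruction result (Lemma~\ref{lem:starObstruction}) twice: once with apex $a$ and once with apex $c$, each time using a triangle sitting inside the diamond. Recall that a diamond on $\{a,b,c,d\}$ with central edge $ac$ has edge set $\{ab,ac,ad,bc,cd\}$; in particular $a$ and $c$ are each adjacent to all three other diamond vertices, while $b$ and $d$ are adjacent only to $a$ and $c$. Consequently $N[a]$ and $N[c]$ each already contain $\{a,b,c,d\}$, a fact I would record at the outset.

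First I would bound $\deg_G(a)$. The vertices $a,b,c$ form a triangle, so Lemma~\ref{lem:starObstruction} applied with $u=a$ and $\{v,w\}=\{b,c\}$ gives that $a$ has at least two neighbours not in $N[b]\cup N[c]$. Since $N[c]\supseteq\{a,b,c,d\}$, these two neighbours lie in $V(G)\setminus\{a,b,c,d\}$; together with the three diamond-neighbours $b,c,d$ of $a$, this yields $\deg_G(a)\geq 5$.

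Symmetrically, $a,c,d$ form a triangle, so applying Lemma~\ref{lem:starObstruction} with $u=c$ and $\{v,w\}=\{a,d\}$ shows that $c$ has at least two neighbours not in $N[a]\cup N[d]$, and $N[a]\supseteq\{a,b,c,d\}$ forces these two neighbours to be outside $\{a,b,c,d\}$. Adding the diamond-neighbours $a,b,d$ of $c$ gives $\deg_G(c)\geq 5$, completing the argument.

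There is essentially no hard step; the only point requiring care is to pick the triangle so that the two non-apex vertices have closed neighbourhoods covering all of $\{a,b,c,d\}$, ensuring the two neighbours guaranteed by Lemma~\ref{lem:starObstruction} are genuinely new rather than already among the diamond vertices. (One could alternatively invoke the $4$-cycle obstruction of Lemma~\ref{lem:arrowObstruction} on the $4$-cycle $abcd$, but the triangle route is the most direct.)
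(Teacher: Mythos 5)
Your argument is correct: the paper states this as a corollary without proof, and your double application of the triangle obstruction (Lemma~\ref{lem:starObstruction}) with apexes $a$ and $c$, using that $N[b]\cup N[c]$ and $N[a]\cup N[d]$ each cover all four diamond vertices, is exactly the intended one-line derivation. The corollary's placement immediately after the $4$-cycle material suggests the authors may have had the equally short route via Lemma~\ref{lem:arrowObstruction} on the $4$-cycle $abcd$ in mind, which you also correctly identify as an alternative.
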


 Let $G$ be a non-hamiltonian graph. If adding a non-edge $e$ to $G$ makes $G$ hamiltonian, we call $e$ a \emph{bad edge}. Fix an obstruction $O$ in $G$. We say a non-edge $e$ \textit{destroys} $O$ in $G$ if $G + e$ does not contain $O$. Sometimes it is necessary to add multiple non-edges of $G$ in order to destroy an obstruction $O$. We say for each of these non-edges that they \emph{work towards the destruction} of $O$. We will call such a non-edge of $G$ a \textit{good edge}. We will more formally define good edges for the different obstructions.

Let $G'$ be a $K_2$-hypohamiltonian graph such that $G$ is a spanning subgraph.
Let $G$ contain a type A obstruction $(W,X)$, i.e.\ $(W,X)$ is a non-trivial partition of $V(G)$, $G[X]$ contains at least three vertices some of which are adjacent and $p(G[W])\geq \lvert X\rvert - 1$. $G'$ cannot contain a type A obstruction by Lemma~\ref{lem:typeA} since $G'$ is $K_2$-hypohamiltonian, hence there exists an edge in $E(G')\setminus E(G)$ with endpoints in different components of $G[W]$. We will call such an edge a \textit{good $(W,X)$ A-edge}. 
%Since we assume $G[W]$ to be a union of disjoint paths, there must exist an edge in $E(G')\setminus E(G)$ for which one of the endpoints has degree $1$ in $G[W]$ (but both endpoints could be in the same component of $G[W]$).
Similarly, let $G$ contain a type B obstruction $(W,X)$, i.e.\ $(W,X)$ is a non-trivial partition of $V(G)$, $G[X]$ contains at least three vertices some of which are adjacent and $k(G[W])\geq \lvert X\rvert - 1$. By definition of $k(G)$, we can only destroy a type B obstruction by destroying the isolated vertices and isolated $K_2$'s in $G[W]$ or by destroying the vertices of degree $1$ in $G[W] - I(G[W])$. Hence, an edge in $G^c$ is a \textit{good $(W,X)$ B-edge} if both of its endpoints lie in $W$ and at least one of its endpoints has degree at most $1$ in $G[W]$. It follows from Lemma~\ref{lem:kLessThanP} that every type B obstruction is a type A obstruction but not vice versa. However, experimental evidence showed that a type A obstruction is only very sporadically not a type B obstruction. Since the computational overhead of searching for type B obstructions is greater than that of searching for type A obstructions we obtained better results by only searching for type A obstructions. We therefore omit the type B obstructions entirely from Algorithm~\ref{alg:AddEdges} below and our implementation.

Let $G$ contain a type C obstruction $(W,X,vw)$, i.e.\ $(W,X)$ is a non-trivial partition, $W$ and independent set, $v$ and $w$ are adjacent vertices in $G[X]$, $n_1$ and $n_2$ are the number of vertices of $X - v - w$ joined to one or more than one vertex, respectively, of $W$ and $2n_2 + n_1 < 2\lvert W\rvert$.
A non-edge $e$ of $G$ destroys or works towards destruction of $(W,X,vw)$ if adding it to $G$ increases $2n_2 + n_1$ or if $W$ is not an independent set of $G + e$.
Hence, we call an edge in $G^c$ a \textit{good $(W,X,vw)$-edge} if it connects a vertex of $X - v - w$ which has at most one neighbour in $W$ to $W$ or if both of its endpoints are in $W$.

If $G$ contains a triangle obstruction $(u, uvw)$, i.e.\ $uvw$ is a triangle in $G$ such that $\lvert N(u)\setminus (N(v)\cup N(w))\rvert\leq 1$, then there must be an edge $ux\in E(G')\setminus E(G)$ such that $x$ is not an element of $N[v]\cup N[w]$. We call such an edge a \textit{good $(u,uvw)$-edge}.

Similarly, if $G$ contains a $4$-cycle obstruction $(u, uvwx)$, i.e.\ $uvwx$ is a $4$-cycle of $G$ and $\lvert N(u)\setminus N[w]\rvert\leq 1$, then there must be an edge $uy\in E(G')\setminus E(G)$ such that $y$ is not an element of $N[w]$. We call such an edge a \textit{good $(u, uvwx)$-edge}.

\subsection{Algorithm}
The goal of our algorithm is to generate all pairwise non-isomorphic $K_2$-hypohamiltonian graphs of a given order $n$. Its pseudocode is split into two parts given by Algorithm~\ref{alg:Start} and Algorithm~\ref{alg:AddEdges}. The former contains the initialisation of the algorithm, which is only called once, while the latter contains the main routine, which recursively calls itself many times. We initialise the generation by taking an $(n-2)$-cycle and a copy of $K_2$, i.e.\ an isolated edge $uv$. In this way $G - u - v$ is hamiltonian. We then get new graphs by adding two edges with endpoint $u$ and two edges with endpoint $v$ to the graph in all possible ways that do not make the graph hamiltonian, filtering out isomorphic copies. Note that any $K_2$-hypohamiltonian graph of order $n$ must have a subgraph isomorphic to one of these new graphs, as $K_2$-hypohamiltonian graphs are $3$-connected. Then on each of these new graphs we call Algorithm~\ref{alg:AddEdges}, which will recursively add edges until we can prune the search. We use obstructions to minimise the number of edges we need to add in each iteration. During both Algorithm~\ref{alg:Start} and Algorithm~\ref{alg:AddEdges}, we only add edges between existing vertices of the graph. Hence, if a graph is hamiltonian, we prune the search and do not examine its supergraphs. In particular, suppose we add an edge $e$ to a non-hamiltonian graph $G$ and obtain a hamiltonian graph. Then we forbid the adding of $e$ to any graph containing $G$ as a subgraph by keeping and dynamically updating a list of  ``forbidden edges" associated with the graph which is currently being constructed by the algorithm. For efficiency reasons, we store this list of forbidden edges as a bitvector.

Computing whether or not a graph is $K_2$-hypohamiltonian is computationally quite expensive as determining if a graph is hamiltonian is already NP-complete. Therefore, before doing this we look for obstructions that make it impossible for the graph to be $K_2$-hypohamiltonian and perform the $K_2$-hypohamiltonicity check in the few cases that the graph does not contain any obstructions. (E.g.\ for $n = 18$, only $98$ of the $6\,116\,186$ generated intermediate candidate graphs contained no obstructions. Three of these were $K_2$-hypohamiltonian.) Moreover, since we know that the parent graph was non-hamiltonian, if the current graph would be hamiltonian, the hamiltonian cycle must go through the most recently added edge. For the efficiency of the algorithm it is important that in each call of Algorithm~\ref{alg:AddEdges} as few edges as possible are added. To this end, after finding an obstruction, we only need to add edges that destroy or work towards the destruction of this obstruction instead of adding all non-forbidden edges. In Theorem~\ref{thm:inductionProof} we show this is sufficient. In Algorithm~\ref{alg:AddEdges}, we investigate which obstructions are present in the graph and count how many non-hamiltonian successors each investigated obstruction has. The obstruction with the fewest will be the one we destroy or work towards the destruction of. The good edges for this obstruction which, if added, will not yield a hamiltonian graph will be stored in the list \texttt{EdgesToBeAdded}. While checking the different obstructions, we will encounter edges that, if added, give a hamiltonian graph. We add these to the list \texttt{NewForbiddenEdges} as they should not be added to any supergraph of the one we are currently inspecting.

Adding edges would in many cases lead to graphs isomorphic to ones generated earlier. In order to prevent this, we keep a list of the canonical forms of all generated intermediate candidate graphs (computed by \texttt{nauty}~\cite{MP13}). Every time we inspect a candidate graph, its canonical form is compared to those in the list. If it is already present we can reject it, otherwise it is added.

Note that more advanced isomorphism rejection methods exist, for example the canonical construction path method by McKay~\cite{Mc98}. However, such methods are not compatible with the abovementioned obstructions. Hence, this seemingly naive isomorphism rejection method is very suitable for this problem.

In the following theorem, we show that the algorithm produces all $K_2$-hypohamiltonian graphs of a given order.
\begin{theorem}\label{thm:inductionProof}
	Let $n$ be a positive integer. When Algorithm~\ref{alg:Start} terminates, we have output precisely all pairwise non-isomorphic $K_2$-hypohamiltonian graphs of order $n$.
\end{theorem}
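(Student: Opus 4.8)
The plan is to prove the two required properties separately: (i) every graph the algorithm outputs is $K_2$-hypohamiltonian of order $n$, and (ii) every $K_2$-hypohamiltonian graph of order $n$ is output (exactly once, up to isomorphism). Property (i) is essentially immediate from the structure of the algorithm: a graph is only output after an explicit $K_2$-hypohamiltonicity check passes, and the order is fixed to $n$ from the initialisation in Algorithm~\ref{alg:Start} since we never add vertices. The ``exactly once'' part follows from the canonical-form bookkeeping: every intermediate candidate is compared (via its \texttt{nauty} canonical form) against all previously generated candidates and discarded if a match is found, so no isomorphism class is output twice.

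The substance is property (ii): completeness. First I would set up the right notion of progress. Since $K_2$-hypohamiltonian graphs are $3$-connected, any such target graph $H$ on $n$ vertices has, for each vertex $u$, at least two neighbours; picking an edge $uv$ of $H$ and two further $H$-neighbours of each of $u,v$ produces a subgraph of $H$ isomorphic to one of the starting graphs enumerated in Algorithm~\ref{alg:Start} (here one uses that $H-u-v$ is hamiltonian, hence $H$ minus those four chosen edges is non-hamiltonian, so the starting graph is a legitimate non-hamiltonian seed). So $H$ has a spanning subgraph $G_0$ that the algorithm actually constructs. The goal is then to show that from any spanning subgraph $G \subsetneq H$ that the algorithm constructs, it also constructs a spanning subgraph $G' $ with $G \subsetneq G' \subseteq H$; iterating reaches $H$ itself (the process must terminate since $E(H)$ is finite and each step strictly enlarges the edge set).

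The inductive step is the main obstacle and rests on two facts established earlier in the excerpt. (a) \textbf{Forbidden edges are safe to forbid:} an edge is added to the forbidden list only when adding it to the current graph yields a hamiltonian graph; but $H$ is non-hamiltonian, so no edge of $E(H)\setminus E(G)$ is ever forbidden — hence the algorithm never rejects an edge we need. (b) \textbf{Good edges suffice:} if $G$ contains an obstruction $O$ of type A, C, triangle, or $4$-cycle, then by Lemmas~\ref{lem:typeA},~\ref{lem:typeC},~\ref{lem:starObstruction},~\ref{lem:arrowObstruction} the $K_2$-hypohamiltonian graph $H$ cannot contain $O$, so $E(H)\setminus E(G)$ must contain at least one good edge for $O$ (this is exactly how ``good edge'' was defined for each obstruction type); Algorithm~\ref{alg:AddEdges} tries adding every non-hamiltonian good edge for the chosen obstruction, so in particular it tries adding a good edge lying in $H$, producing the desired $G'$. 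If $G$ contains no obstruction at all, then Algorithm~\ref{alg:AddEdges} falls back to adding all non-forbidden edges (one at a time), and since $E(H)\setminus E(G)\ne\emptyset$ and those edges are not forbidden by (a), some edge of $H$ is tried. Finally, the choice among obstructions (picking the one with fewest non-hamiltonian successors) does not harm completeness: whichever obstruction is chosen, $H$ avoids it, so a good edge of $H$ for that obstruction exists and is tried. The only subtlety to address carefully is the isomorphism-rejection interaction: when $G'$ (or some graph isomorphic to it) was already generated along another branch, the algorithm does not re-expand it — but then that isomorphic copy was expanded earlier, and one argues by a secondary induction (on, say, the number of edges still to be added to reach a target isomorphic to $H$) that the expansion from \emph{that} copy also reaches a graph isomorphic to $H$; since canonical forms are isomorphism invariants and $H$ itself is eventually constructed on some branch, it passes the final check and is output. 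I expect writing this interaction cleanly — rather than the obstruction bookkeeping, which is routine once (a) and (b) are in hand — to be the fiddly part of the argument.
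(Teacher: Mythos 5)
Your proposal is correct and follows essentially the same route as the paper: soundness and uniqueness read off from the output and isomorphism-check lines, and completeness by induction on the number of edges, starting from the $(n-2)$-cycle-plus-$K_2$ seed and using that the target graph contains no obstruction (so a good, non-forbidden edge of the target always exists to be added, and in the obstruction-free case the fall-back loop adds some target edge). Your extra care about the interaction with isomorphism rejection and about why target edges are never forbidden is exactly the content the paper compresses into ``without loss of generality, $G'$ is a subgraph of $G$'', so nothing is missing.
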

\begin{proof}
	By line~\ref{line:Output} of Algorithm~\ref{alg:AddEdges}, we see that only $K_2$-hypohamiltonian graphs are output.  By line~\ref{line:iso} we see that no isomorphic copies are ever output.
	It remains to show that the algorithm produces all such pairwise non-isomorphic graphs of order $n$. Let $G$ be a $K_2$-hypohamiltonian graph. It is easy to see that there is a spanning subgraph $G_0$ of $G$ consisting of an $(n-2)$-cycle $\mathfrak{h}$ and a copy of $K_2$, i.e.\ an edge $uv$, disjoint from $\mathfrak{h}$ such that $u$ is adjacent to two vertices of $\mathfrak{h}$ and $v$ is adjacent to two vertices of $\mathfrak{h}$. Clearly if $G_0$ is hamiltonian, then $G$ is as well, which is a contradiction. 
	
We now proceed by induction and show that if we call Algorithm~\ref{alg:AddEdges} on a graph which is isomorphic to a spanning subgraph of $G$ on $m$ edges with $\lvert E(G_0)\rvert \leq m < \lvert E(G)\rvert $, then we will call Algorithm~\ref{alg:AddEdges} on a graph isomorphic to a spanning subgraph of $G$ on $m+1$ edges. Assume that $G'$ is such a graph with $m$ edges for $\lvert E(G_0)\rvert \leq m < \lvert E(G)\rvert$ and that we call Algorithm~\ref{alg:AddEdges} on $G'$. Without loss of generality, we assume that $G'$ is a subgraph of $G$.

Suppose that the algorithm did not find any obstructions for $G'$. Since $G'$ is a strict subgraph of $G$, there must be an edge $e$ in $E(G)\setminus E(G')$. Clearly $G' + e$ cannot be hamiltonian. Hence, we add $e$ on line~\ref{line:end}, since we do this for all non-forbidden and non-bad edges in $G'^c$.

Now suppose that $G'$ does contain an obstruction. Assume the algorithm found a type A obstruction $(W,X)$ which has the fewest non-forbidden good edges among all obstructions of the graph that were checked. By Lemma~\ref{lem:typeA}, $(W,X)$ is not a type A obstruction in $G$. Hence, $G$ contains a good $(W,X)$ A-edge $e\in E(G)\setminus E(G')$. Since $G'+e$ is not  hamiltonian, $e$ is not forbidden and it is added to $G'$ on line~\ref{line:AddObstructionEdges} of Algorithm~\ref{alg:AddEdges}. An analogous reasoning holds for the other obstructions.

Thus, we call Algorithm~\ref{alg:AddEdges} on a graph isomorphic to a spanning subgraph of $G$ with $m+1$ edges. By induction, we will call Algorithm~\ref{alg:AddEdges} on a graph isomorphic to $G$, say $H$, and since it is $K_2$-hypohamiltonian, we will output $H$ on line~\ref{line:Output}.
\end{proof}

\begin{algorithm}[!htb]
\caption{Generate all $K_2$-hypohamiltonian graphs of order $n$}
\label{alg:Start}
\begin{algorithmic}[1]
	\State let $G:=C_{n-2}+\{\{u,v\}, \{uv\}\}$
	\For{every way of adding two edges incident to $u$ and two edges incident to $v$}
		\State Call the resulting graph $G'$
		\If{$G'$ is hamiltonian}
			\State Go to the next iteration of the loop.
		\EndIf
		\If{$G'$ is isomorphic to a previously constructed graph}
			\State Go to the next iteration of the loop.
		\EndIf
		\State Create list of bitvectors \texttt{ForbiddenEdges} containing only edges which, if added, \hspace*{4mm} make $G'$ hamiltonian
		\State \textsc{AddEdges}($G'$, \texttt{ForbiddenEdges}) // i.e.\ perform Algorithm~\ref{alg:AddEdges}.
	\EndFor
\end{algorithmic}
\end{algorithm}
\begin{algorithm}[!htb]
\caption{\textsc{AddEdges}(Graph $G$, List \texttt{ForbiddenEdges})}\label{alg:AddEdges}
\begin{algorithmic}[1]
	\If{an isomorphic copy of $G$ was generated before}\label{line:iso}
		\State \Return 
	\EndIf
	\State Create List \texttt{NewForbiddenEdges} = \texttt{ForbiddenEdges}
	\State Create List \texttt{EdgesToBeAdded} containing all edges of $G^c$
	\For{type A obstructions $(W,X)$}
		\ForAll{non-forbidden good $(W,X)$ A-edges $e$}
			\State Add bad edges $e$ to \texttt{NewForbiddenEdges}
		\EndFor
		\If {$\# \text{non-forbidden good } (W,X) \text{ A-edges} < \# \texttt{EdgesToBeAdded} $}
			\State \texttt{EdgesToBeAdded} = non-forbidden good $(W,X)$ A-edges 
		\EndIf
	\EndFor
	\ForAll{type C obstructions $(W,X)$}
		\ForAll{non-forbidden good C-edges $e$}
			\State Add bad edges $e$ to \texttt{NewForbiddenEdges}
		\EndFor
		\If {$\# \text{non-forbidden good } (W,X,vw)\text{-edges} < \# \texttt{EdgesToBeAdded} $}
			\State \texttt{EdgesToBeAdded} = non-forbidden good $(W,X,vw)$-edges 
		\EndIf
	\EndFor
	\ForAll{vertices $w$ of degree $2$}
		\ForAll{edges $e$ in $G^c$ having $w$ as an endpoint}
			\State Add bad edges $e$ to \texttt{NewForbiddenEdges}
		\EndFor
%		\If {there are fewer non-forbidden edges $G^c$ having $w$ as an endpoint than edges in EdgesToBeAdded}
		\If {$\#\text{non-forbidden edges in }G^c \text{ incident to }w < \# \texttt{EdgesToBeAdded}$}
			\State \texttt{EdgesToBeAdded} = non-forbidden edges in $G^c$ incident to $w$
		\EndIf
	\EndFor
	\ForAll{triangle obstructions $(v, uvw)$}
		\ForAll{good $(v,uvw)$-edges $e$}
			\State Add bad edges $e$ to \texttt{NewForbiddenEdges}
		\EndFor
		\If {$\# \text{non-forbidden good } (v, uvw)\text{-edges} < \# \texttt{EdgesToBeAdded}$}
			\State \texttt{EdgesToBeAdded} = non-forbidden good $(v, uwv)$-edges 
		\EndIf
	\EndFor
	\ForAll{$4$-cycle obstructions $(u, uvwx)$}
		\ForAll{good $(u, uvwx)$-edges $e$}
			\State Add bad edges $e$ to \texttt{NewForbiddenEdges}
		\EndFor
		\If {$\# \text{non-forbidden good } (u, uvwx)\text{-edges} < \# \texttt{EdgesToBeAdded}$}
			\State \texttt{EdgesToBeAdded} = non-forbidden good $(u, uwvx)$-edges 
		\EndIf
	\EndFor
\If{we found any of the above obstructions}
	\ForAll{edges $e$ in \texttt{EdgesToBeAdded}}
		\State \textsc{AddEdges}($G+e$, \texttt{NewForbiddenEdges})\label{line:AddObstructionEdges}
	\EndFor
	\State \Return
\Else
	\If{$G$ is $K_2$-hypohamiltonian}\label{line:Output}
		\State Output $G$ 
	\EndIf
	\ForAll{edges $e$ in \texttt{EdgesToBeAdded}}
		\If{$e$ is not forbidden and $G+e$ is not hamiltonian}
			\State \textsc{AddEdges}($G + e$, \texttt{NewForbiddenEdges}) \label{line:end}
		\EndIf
	\EndFor
\EndIf
\end{algorithmic}
\end{algorithm}

\subsection{Results}
We created an implementation of this algorithm which can be found on GitHub~\cite{GRZ23} and used this to improve the results given in~\cite{GRWZ22}. We generated all pairwise non-isomorphic $K_2$-hypohamiltonian graphs of order $n$ for all $n\leq 19$. Note that previously this was only done up to order $13$. We were also able to extend the results for given lower bounds on the girth. Indeed, since we only add edges in the algorithm, we can simply prune the search as soon as a cycle of forbidden length appears. The longest computations were parallellised and performed on the supercomputer of the VSC (Flemish Supercomputer Center) as they took 1-2 CPU-years. The results are summarised in Table~\ref{table:general_counts}. The complete lists can be downloaded from the \textit{House of Graphs}~\cite{CDG23} at \url{https://houseofgraphs.org/meta-directory/K2-hypohamiltonian}.

\begin{table}[!htb]
\centering
\begin{tabular}{c | c | c | c | c | c | }
	Order & $g\geq 3$ & $g\geq 4$ & $g\geq 5$ & $g\geq 6$ & $g\geq7$\\\hline
	10 & 1 & 1 & 1 & 0 & 0\\
	11 & 0 & 0 & 0 & 0 & 0\\
	12 & 0 & 0 & 0 & 0 & 0\\
	13 & 1 & 1 & 1 & 0 & 0\\
	14 & \textbf{0} & 0 & 0 & 0 & 0\\
	15 & \textbf{1} & 1 & 1 & 0 & 0\\
	16 & \textbf{4} & 4 & 4 & 0 & 0\\
	17 & \textbf{0} & \textbf{0} & 0 & 0 & 0\\
	18 & \textbf{3} & \textbf{3} & 3 & 0 & 0\\
	19 & \textbf{28} & \textbf{28} & 28 & 0 & 0\\
	20 & ? & \textbf{2} & 2 & 0 & 0\\
	21 & ? & ? & 31 & 0 & 0\\
	22 & ? & ? & \textbf{332} & 0 & 0\\
	23 & ? & ? & \textbf{19} & 0 & 0\\
	24 & ? & ? & \textbf{613} & 0 & 0\\
	25 & ? & ? & ? & \textbf{1} & 0\\
	26 & ? & ? & ? & \textbf{0} & \textbf{0}\\
	27 & ? & ? & ? & \textbf{0} & \textbf{0}\\
%	27 & ? & ? & $\geq 13574$ & 0 & 0\\
	28--30 & ? & ? & ? & ? & \textbf{0}
\end{tabular}
\caption{Counts of $K_2$-hypohamiltonian graphs. Columns with $g\geq k$ indicate that these are only the graphs with girth at least $k$. Bold entries indicate new results.}
\label{table:general_counts}
\end{table}

\interfootnotelinepenalty=10000

In particular, we show that there exist no $K_2$-hypohamiltonian graphs of order $14$ or order $17$. Combined with our results from~\cite{GRWZ22}, we can now characterise the orders for which there exist $K_2$-hypohamiltonian graphs, thereby mirroring the characterisation of orders of hypohamiltonian graphs given by Aldred, McKay and Wormald~\cite{AMW97}.
\begin{theorem}
	There exists a $K_2$-hypohamiltonian graph of order $n$ if and only if $n\in \{10,13,15,16\}$ or $n\geq 18$.
\end{theorem}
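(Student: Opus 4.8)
The plan is to split the biconditional into its two implications and settle each by combining the exhaustive generator established above with the constructions from~\cite{GRWZ22}.

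For the ``if'' direction it suffices to exhibit one graph per admissible order. By~\cite{GRWZ22} the Petersen graph is $K_2$-hypohamiltonian, covering order $10$; ad hoc examples there cover orders $13$, $15$ and $16$; and an infinite family together with finitely many sporadic graphs covers every $n\ge 18$. Hence $K_2$-hypohamiltonian graphs exist whenever $n\in\{10,13,15,16\}$ or $n\ge 18$, and no new argument is needed.

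For the ``only if'' direction I would argue order by order for the small orders not listed above, namely $n\le 9$, $n=11$, $n=12$, $n=14$ and $n=17$. The cases $n\le 9$ and $n\in\{11,12\}$ were already treated in~\cite{GRWZ22}; the two genuinely open orders are $14$ and $17$. To handle these --- and, if one wants a uniform proof, also $n=11,12$ --- I would run Algorithm~\ref{alg:Start} with the respective value of $n$. By Theorem~\ref{thm:inductionProof} the algorithm outputs precisely all pairwise non-isomorphic $K_2$-hypohamiltonian graphs of that order, so an empty output certifies non-existence. As recorded in Table~\ref{table:general_counts}, the output is empty for $n=14$ and for $n=17$, which rules these orders out; together with the existence results this yields the stated characterisation.

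The real issue is not mathematical but one of trusting the computation: the $n=14$ and $n=17$ runs of Algorithm~\ref{alg:Start} must be genuinely exhaustive. This is exactly what Theorem~\ref{thm:inductionProof} provides, since each pruning step there --- discarding hamiltonian graphs, rejecting isomorphic duplicates, and, after locating an obstruction with the fewest non-hamiltonian successors, adding only its good edges --- is shown not to miss any $K_2$-hypohamiltonian graph. Once that theorem is in hand the characterisation follows by merely assembling the pieces; a sensible extra safeguard would be to re-verify the two empty outputs with an independent implementation.
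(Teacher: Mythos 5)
Your proposal matches the paper's argument exactly: existence for $n\in\{10,13,15,16\}$ and $n\ge 18$ and non-existence for $n\le 9$ and $n\in\{11,12\}$ are imported from~\cite{GRWZ22}, and the two remaining orders $14$ and $17$ are eliminated by the exhaustive generator, whose completeness is guaranteed by Theorem~\ref{thm:inductionProof} and whose empty output for these orders is recorded in Table~\ref{table:general_counts}. This is precisely how the paper establishes the characterisation, so no further comment is needed.
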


Currently it is unknown whether $K_2$-hypohamiltonian graphs of girth $3$ or girth $4$ exist. This is different from the hypohamiltonian setting where the smallest hypohamiltonian graph of girth $3$ and the smallest of girth $4$ have order $18$. From Table~\ref{table:general_counts} we infer the following.
\begin{proposition}
	A $K_2$-hypohamiltonian graph of girth $3$ has order at least $20$. A $K_2$-hypohamiltonian graph of girth $4$ has order at least $21$.
\end{proposition}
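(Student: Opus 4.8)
The plan is to read both bounds straight off the exhaustive output of the generation algorithm, using Theorem~\ref{thm:inductionProof} to know that this output is complete. The crucial observation is that the algorithm only ever adds edges, so a partial graph already containing a cycle of length smaller than a target girth $g$ can never be completed to a $K_2$-hypohamiltonian graph of girth at least $g$. Hence we may prune the search the instant a short cycle appears, and the pruned run enumerates exactly the pairwise non-isomorphic $K_2$-hypohamiltonian graphs of order $n$ and girth at least $g$. Running this once with no girth restriction (equivalently $g\ge 3$, since every $K_2$-hypohamiltonian graph is $3$-connected and thus contains a cycle) and once with the restriction $g\ge 4$ yields the first two columns of Table~\ref{table:general_counts}.

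For the girth-$3$ claim I would compare these two counts for each $n\le 19$. A $K_2$-hypohamiltonian graph of order $n$ has girth exactly $3$ if and only if it is counted in the unrestricted run but not in the $g\ge 4$ run; therefore equality of the two counts for a given $n$ certifies the nonexistence of a $K_2$-hypohamiltonian graph of girth $3$ on $n$ vertices. Since the $g\ge 3$ and $g\ge 4$ columns of Table~\ref{table:general_counts} agree for every $n\le 19$, no such graph exists on at most $19$ vertices, i.e.\ a $K_2$-hypohamiltonian graph of girth $3$ has order at least $20$.

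For the girth-$4$ claim the same reasoning applies with $4$ and $5$ in place of $3$ and $4$: a $K_2$-hypohamiltonian graph has girth exactly $4$ precisely when it is counted with the constraint $g\ge 4$ but not with the constraint $g\ge 5$. The $g\ge 4$ and $g\ge 5$ columns of Table~\ref{table:general_counts} coincide for all $n\le 20$, which forces every $K_2$-hypohamiltonian graph of order at most $20$ to have girth at least $5$. Hence a $K_2$-hypohamiltonian graph of girth $4$ has order at least $21$.

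The sole genuine difficulty is computational rather than conceptual: the unrestricted enumeration must reach order $19$ and the $g\ge 4$ enumeration must reach order $20$, and this is exactly the range in which the bounding criteria of Section~\ref{sec:algorithm} — the type~A and type~C obstructions, the triangle and $4$-cycle obstructions, and the degree-$2$ pruning — combined with the girth-based pruning are indispensable for tractability; these are the runs described in the text as consuming one to two CPU-years on the VSC cluster. Beyond the completeness guaranteed by Theorem~\ref{thm:inductionProof}, no further mathematical input is required.
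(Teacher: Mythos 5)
Your proposal is correct and matches the paper's argument exactly: the proposition is read off Table~\ref{table:general_counts} by comparing the $g\geq 3$ and $g\geq 4$ columns up to order $19$ and the $g\geq 4$ and $g\geq 5$ columns up to order $20$, with completeness of the girth-restricted enumeration guaranteed by Theorem~\ref{thm:inductionProof} together with the observation that the algorithm only adds edges and can therefore prune once a short cycle appears. No discrepancy to report.
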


We remarked in~\cite{GRWZ22} that the smallest $K_2$-hypohamiltonian graph which is not hypohamiltonian has order at least $14$ and at most $18$. This upper bound comes from a graph of girth $5$. As our generation results did not yield any $K_2$-hypohamiltonian graphs of girth $3$ or $4$ up to order $18$, we have shown the following.
\begin{proposition}
	The smallest non-hypohamiltonian $K_2$-hypohamiltonian graph has order $18$. 
\end{proposition}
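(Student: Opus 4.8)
The statement splits into an existence claim --- some non-hypohamiltonian $K_2$-hypohamiltonian graph has order $18$ --- and a minimality claim --- none has order at most $17$. For the existence half there is nothing new to do: in~\cite{GRWZ22} an explicit $K_2$-hypohamiltonian graph of order $18$ and girth $5$ was constructed which is not hypohamiltonian, and the plan is simply to cite it. So what remains is the minimality half: every $K_2$-hypohamiltonian graph on at most $17$ vertices is hypohamiltonian.

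The idea is to reduce this to a finite, easily verified statement using the exhaustive generation behind Table~\ref{table:general_counts}, whose correctness is Theorem~\ref{thm:inductionProof}. By~\cite{GRWZ22} the smallest non-hypohamiltonian $K_2$-hypohamiltonian graph has order at least $14$, so only the orders $14,15,16,17$ remain to be dealt with. Orders $14$ and $17$ contribute nothing, since the generator shows there is no $K_2$-hypohamiltonian graph at all on $14$ or on $17$ vertices (the preceding theorem). For order $15$ the generator yields a single $K_2$-hypohamiltonian graph and for order $16$ exactly four; for each of these five graphs I would check directly that every vertex-deleted subgraph is hamiltonian, i.e.\ that the graph is hypohamiltonian. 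In fact these five graphs are precisely the unique hypohamiltonian graph on $15$ vertices and the four hypohamiltonian graphs on $16$ vertices known from the classical enumeration, so the verification is immediate. Hence no $K_2$-hypohamiltonian graph of order at most $17$ fails to be hypohamiltonian, and combining this with the order-$18$ example finishes the proof.

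There is an equivalent phrasing through girth, which is the one the surrounding text hints at. Hypohamiltonian graphs of order at most $17$ exist only for orders $10,13,15,16$, and all of them are well known to have girth $5$; hence a non-hypohamiltonian $K_2$-hypohamiltonian graph of order below $18$, if one existed, would be forced to have girth $3$ or $4$. But the exhaustive generation detects no $K_2$-hypohamiltonian graph of girth $3$ or $4$ of order at most $18$ --- indeed the girth-$3$ and girth-$4$ columns of Table~\ref{table:general_counts} agree with the girth-$5$ column up to order $19$ --- so no such graph exists. I would present whichever of the two versions is more economical given the results already quoted.

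I do not expect a genuine obstacle: essentially all the work is carried by Theorem~\ref{thm:inductionProof} (exhaustiveness of the generator) and by the two facts imported from~\cite{GRWZ22} (the bound ``order at least $14$'' and the order-$18$ non-hypohamiltonian example). The only points needing care are bookkeeping --- ensuring that the ranges covered by the generator and by the cited lower bound together leave no gap --- and, if a fully self-contained argument is wanted, running the same trivial hypohamiltonicity check on the Petersen graph and on the unique $K_2$-hypohamiltonian graph of order $13$ as well, so that the appeal to the ``at least $14$'' bound can be dropped entirely.
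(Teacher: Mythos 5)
Your first version is correct and is in substance the paper's own argument: everything rests on the exhaustive generation (Theorem~\ref{thm:inductionProof} and Table~\ref{table:general_counts}) together with the two facts imported from~\cite{GRWZ22}, namely the lower bound $14$ and the non-hypohamiltonian order-$18$ example. The only real difference is how orders $15$ and $16$ are discharged: you check hypohamiltonicity of the $1+4$ generated graphs directly, whereas the paper routes through girth, observing that the girth-$\geq 5$ cases on at most $17$ vertices were already settled in~\cite{GRWZ22} and that the new computation rules out girth $3$ and $4$ up to order $18$. Your direct check is, if anything, more self-contained.

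One caution about your alternative girth phrasing: the inference ``all hypohamiltonian graphs of order at most $17$ have girth $5$, hence a non-hypohamiltonian $K_2$-hypohamiltonian graph of order below $18$ would be forced to have girth $3$ or $4$'' is a non sequitur --- the girths of hypohamiltonian graphs place no constraint on the girth of a hypothetical \emph{non}-hypohamiltonian example. The correct justification for that step is the one the paper implicitly relies on: \cite{GRWZ22} already determined all $K_2$-hypohamiltonian graphs of girth at least $5$ on at most $17$ vertices and verified that they are hypohamiltonian, so only girths $3$ and $4$ remained open. If you present the girth version, substitute this justification (or simply keep your first version, which needs no such detour).
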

There is only one such graph of order $18$ and it is obtained by applying the operation detailed in Section~\ref{sec:operation} to two copies of the Petersen graph, see \cite[Fig.~4b]{GRWZ22}\footnote{This graph can also be inspected on the \textit{House of Graphs}~\cite{CDG23} at \url{https://houseofgraphs.org/graphs/49001}.}.

In~\cite{GZ17} it is shown that a smallest planar hypohamiltonian graph has order at least 23, improving a result by Aldred, McKay and Wormald~\cite{AMW97}. In~\cite{GRWZ22} it was proven that a smallest planar $K_2$-hypohamiltonian graph has order at least $18$ and at most $48$. There is no mention of planar $K_2$-hypohamiltonian graphs of girth at least $4$. We improve the lower bound on the order of a smallest planar $K_2$-hypohamiltonian graph using our generator and also give a lower bound when the girth is at least $4$.

Note that it is relatively simple and efficient to restrict the algorithm to the class of planar graphs. In particular, checking planarity at the start of Algorithm~\ref{alg:AddEdges} and pruning if the graph is non-planar will generate all planar $K_2$-hypohamiltonian graphs. We implemented this using the planarity algorithm distributed with \texttt{nauty}~\cite{MP13} based on a method by Boyer and Myrvold~\cite{BM04} and this allowed us to show the following.

\begin{proposition}
	A smallest planar $K_2$-hypohamiltonian graph has order at least $24$. A smallest planar $K_2$-hypohamiltonian graphs of girth at least $4$ has order at least $26$.
\end{proposition}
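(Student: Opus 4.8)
The plan is to reduce the statement to a finite computation with the exhaustive generator of Theorem~\ref{thm:inductionProof}, after checking that the generator remains exhaustive when restricted to planar graphs (and, for the second assertion, additionally to graphs of girth at least $4$).

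First I would observe that both Algorithm~\ref{alg:Start} and Algorithm~\ref{alg:AddEdges} only ever \emph{add} edges to the graph under construction, so every graph passed to a recursive call is a spanning subgraph of every graph produced later along that branch. Since planarity is preserved under taking subgraphs, if a $K_2$-hypohamiltonian graph $G$ is planar then every spanning subgraph of $G$ — in particular every intermediate graph appearing in the induction in the proof of Theorem~\ref{thm:inductionProof} on the way to $G$ — is planar as well. Hence inserting a planarity test at the start of Algorithm~\ref{alg:AddEdges} and pruning whenever the current graph is non-planar removes no branch of the search tree leading to a planar $K_2$-hypohamiltonian graph; the induction in the proof of Theorem~\ref{thm:inductionProof} carries over verbatim with ``spanning subgraph of $G$'' replaced by ``planar spanning subgraph of $G$''. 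The same reasoning applies to a lower bound $g$ on the girth: adding edges can only create new cycles, hence can only decrease the girth, so pruning the moment a cycle of length less than $g$ appears discards no branch leading to a $K_2$-hypohamiltonian graph of girth at least $g$. Therefore the planarity-restricted (respectively planarity- and girth-restricted) version of the algorithm outputs precisely all planar $K_2$-hypohamiltonian graphs (respectively all planar $K_2$-hypohamiltonian graphs of girth at least $g$) of the prescribed order.

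Next I would run this restricted implementation — using the Boyer--Myrvold planarity routine bundled with \texttt{nauty}~\cite{MP13,BM04} — for every order $n\le 23$ in the planar case and for every order $n\le 25$ in the planar, girth-$\ge 4$ case, and record that no graph is output. Combined with the exhaustiveness established above, this shows that a planar $K_2$-hypohamiltonian graph has order at least $24$ and that a planar $K_2$-hypohamiltonian graph of girth at least $4$ has order at least $26$, which is the assertion.

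The main obstacle is not mathematical but computational: the number of intermediate candidate graphs grows quickly with $n$, so the runs at the largest orders (especially $n=25$ with the girth constraint) are the bottleneck, and it is the bounding criteria of Section~\ref{sec:algorithm} — the type~A and type~C obstructions together with the triangle- and $4$-cycle-obstructions — that make these computations feasible. A secondary point worth stating explicitly is that restricting to planar graphs is compatible with the obstruction-based pruning: the ``good edge'' in $E(G)\setminus E(G')$ whose existence drives the induction is singled out purely by Lemmas~\ref{lem:typeA}, \ref{lem:typeC}, \ref{lem:starObstruction} and~\ref{lem:arrowObstruction}, none of which refers to planarity, so it is still available when $G$ is planar.
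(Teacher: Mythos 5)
Your proposal is correct and is essentially the paper's own argument: the paper likewise justifies the restriction by noting that pruning non-planar (respectively short-cycle-containing) intermediate graphs is safe because the algorithm only adds edges, so every intermediate graph is a spanning subgraph of the target and inherits planarity and any girth lower bound, and then reduces the proposition to the exhaustive runs up to orders $23$ and $25$. Your write-up merely spells out in more detail the inheritance argument and its compatibility with the obstruction-based pruning, which the paper states only briefly.
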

The smallest planar $K_2$-hypohamiltonian graph of girth $5$ was determined by the authors in~\cite{GRWZ22}. It has $48$ vertices\footnote{This graph can also be inspected on the \textit{House of Graphs}~\cite{CDG23} at \url{https://houseofgraphs.org/graphs/49121}.}. Note that $3$-connected planar graphs have girth at most $5$, hence there are no planar $K_2$-hypohamiltonian graphs of higher girth. 

Similarly, we can extend our algorithm to exhaustively search for all bipartite $K_2$-hypohamiltonian graphs. Since we start from a cycle and a $K_2$ in the algorithm there are two options. If the order is odd, the cycle will be odd and the graph can never become bipartite. If the order is even, the graph is bipartite and after adding the first edge with endpoint on the cycle and on the $K_2$ the colour classes are completely fixed. We can then add all edges between vertices of the same colour class to the list of forbidden edges. Our generator will then only add edges that preserve bipartiteness, which allows to advance several orders further than in the general case.

For a bipartite $K_2$-hypohamiltonian graph $G$ it must hold that $G$ is \emph{balanced}, i.e.\ both colour classes have the same number of vertices, hence that every vertex-deleted subgraph is non-hamiltonian. A hypohamiltonian graph cannot be bipartite, since we cannot have for all vertices that their removal yields a balanced bipartite graph. However, a question by Gr\"otschel~\cite[Problem 4.56]{GGL95} in the same vein asked whether there exist bipartite hypotraceable graphs, i.e.\ a graph for which every vertex-deleted subgraph has a hamiltonian path, but which itself does not contain one.

Our implementation yields the following results.
\begin{proposition}
	A smallest bipartite $K_2$-hypohamiltonian graph has order at least $30$. If its girth is at least $6$ its order is at least $32$. If its girth is at least $8$ its order is at least $36$.
\end{proposition}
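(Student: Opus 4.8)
The plan is to establish this proposition computationally, by running the exhaustive generation algorithm of Section~\ref{sec:algorithm} specialised to bipartite graphs, exactly as sketched in the paragraph preceding the statement, and then reading off the counts. First I would record the easy structural observation that a bipartite $K_2$-hypohamiltonian graph $G$ must be \emph{balanced}: if $(A,B)$ is the bipartition with $|A|\ne|B|$, then for an edge $e=xy$ with $x\in A$ and $y\in B$ the graph $G-x-y$ is bipartite with parts of sizes $|A|-1\ne|B|-1$, so $G-x-y$ cannot be hamiltonian, contradicting $K_2$-hamiltonicity. Hence $|A|=|B|$ and $|V(G)|$ is even, so it suffices to rule out the even orders below each claimed threshold. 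This also explains why the starting configuration $C_{n-2}\cup K_2$ of Algorithm~\ref{alg:Start} is bipartite exactly when $n$ is even; odd orders are automatically excluded.

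Next I would verify that the bipartite restriction of the algorithm is still exhaustive. Starting from the even cycle $C_{n-2}$ together with a disjoint edge $uv$, as soon as one edge joining a vertex of $\{u,v\}$ to $\mathfrak{h}$ is added the colours of $u$ and $v$ are forced; putting every resulting monochromatic non-edge into \texttt{ForbiddenEdges} guarantees that every graph produced remains bipartite. Conversely, the induction of Theorem~\ref{thm:inductionProof} applies verbatim to a bipartite target $G$: the spanning subgraph $G_0$ and every intermediate spanning subgraph of $G$ is bipartite, and each good edge used in the induction step is an edge of $G$, hence respects the bipartition and is not forbidden. Therefore the modified algorithm outputs precisely all pairwise non-isomorphic bipartite $K_2$-hypohamiltonian graphs of order $n$. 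For the girth claims one additionally prunes a branch as soon as a $4$-cycle appears (girth $\ge 6$) or as soon as a $4$-cycle or a $6$-cycle appears (girth $\ge 8$); odd short cycles cannot occur by bipartiteness, so these are the only forbidden lengths that need to be tested.

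Finally I would run the implementation and report the outcome: no bipartite $K_2$-hypohamiltonian graph occurs among the (even) orders up to $28$, which yields the bound $30$; none of girth at least $6$ occurs among the orders up to $30$, yielding $32$; and none of girth at least $8$ occurs among the orders up to $34$, yielding $36$. The main obstacle is simply the computational cost, since the number of intermediate candidate graphs grows rapidly with $n$; however, as in the planar case, imposing bipartiteness (and, where relevant, a girth lower bound) collapses most of the search tree compared with the general generation, so the required orders are within reach. The obstruction-based pruning and the \texttt{nauty}-based isomorphism rejection are orthogonal to the bipartiteness constraint and remain correct, so no further justification beyond Theorem~\ref{thm:inductionProof} is needed.
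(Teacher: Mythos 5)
Your proposal matches the paper's approach exactly: the proposition is established purely computationally by restricting the generator of Section~\ref{sec:algorithm} to bipartite graphs (fixing the colour classes after the first cycle--$K_2$ edge and forbidding monochromatic edges, with odd orders excluded by the balancedness observation), and the stated bounds are simply read off from the exhaustive search over the relevant even orders. The correctness argument you give via the induction of Theorem~\ref{thm:inductionProof} is the same justification the paper relies on, so there is nothing to add.
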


The correctness of our generation algorithm was proven in Theorem~\ref{thm:inductionProof}. To verify we did not make any implementation mistakes, we performed various correctness tests.

In~\cite{GRWZ22} we already gave counts for $K_2$-hypohamiltonian graphs for given lower bounds on the girth. Our results coincide. 

It is important to note that we use the same routines for checking hamiltonicity and $K_2$-hypohamiltonicity as in that paper. As mentioned in~\cite{GRWZ22}, these were extensively tested using independent programs.

It is easy to adapt our algorithm to only generate graphs with minimum degree at least $x$ and maximum degree at most $y$. Using this we can generate cubic $K_2$-hypohamiltonian graphs. Since there exist very efficient generators for cubic graphs (e.g.\ \texttt{snarkhunter}~\cite{BGM11}), using one of these and then filtering the $K_2$-hypohamiltonian graphs is much faster. Counts of cubic $K_2$-hypohamiltonian graphs were given in~\cite{GRWZ22}. We adapted our algorithm and generated all cubic $K_2$-hypohamiltonian graphs up to order 22. The results are the same in both cases.

Our implementation of the algorithm is open source software and can be found on GitHub~\cite{GRZ23} where it can be verified and used by other researchers.

\section{Infinite families of \texorpdfstring{\boldmath{$K_2$}}{K2}-hypohamiltonian graphs}\label{sec:operation}
\subsection{Amalgam of \texorpdfstring{\boldmath{$K_2$}}{K2}-hypohamiltonian graphs}
Next to the exhaustive generation of $K_2$-hypohamiltonian graphs, operations preserving $K_2$-hypohamiltonicity are of interest in order to describe infinite families. Here, we introduce an operation creating graphs from smaller ones which will preserve $K_2$-hypohamiltonicity as well as planarity under certain conditions. Using this we improve a result from~\cite{GRWZ22} attempting to characterise the orders for which planar $K_2$-hypohamiltonian graphs exist.

 Let $G$ be a graph and $a,a',b,b'\in V(G)$ be pairwise distinct vertices. We say the tuple $(G, a, a', b, b')$ \textit{satisfies the gluing property} if
\begin{itemize}
 \item $a$ and $b$ have degree three and $a'$ and $b'$ have degree at least three,
 \item $aa', bb', ab\in E(G)$ and $ab', a'b, a'b' \not\in E(G)$.
\end{itemize} 
Let $(G_i, a_i, a'_i, b_i, b'_i)$, for $i=1,2$, be two tuples satisfying the gluing property and $E_1 = E(G_1) \setminus \{a_1b_1,b_1b'_1\}$ and $E_2=E(G_2)\setminus \{ a_2b_2 ,b_2b'_2\}$. Take the disconnected graph
 $$\left(V(G_1)\cup V(G_2), E_1 \cup E_2 \cup \{b_1b_2, b_1b'_2, b'_1b_2\}\right)$$
 and identify $a_1$ with $a_2$ and $a'_1$ with $a'_2$ to get a connected graph $G$. We say $G$ is the \textit{amalgam} of $G_1$ and $G_2$ if it is clear which tuples we started with and we define the vertices $a:=a_1 = a_2$, $a':=a'_1=a'_2$ of $G$. 
 
 \begin{figure}[!htb]
	\centering
	\begin{tikzpicture}[vertex/.style = {circle,fill=black,minimum size=5pt,inner sep=0pt}]
		\node[vertex](a) at (0,0) {};
		\path (a) ++(0,0.3) node {$a$};
		\node (al) at (-1.5,0) {};
		\node (ar) at (1.5,0) {};
		\node[vertex] (a') at (0,-1) {};
		\path (a') ++(0.25,0.35) node {$a'$};
		\node (a'lb) at (-1.5, -0.3) {};
		\node (a'lo) at (-1.5,-1.3) {};
		\node[label={20:\vdots}] (a'lm) at (-1.5,-1.1) {};
		\node (a'rb) at (1.5, -0.3) {};
		\node (a'ro) at (1.5,-1.3) {};
		\node[label={160:\vdots}] (a'rm) at (1.5,-1.1) {};
		\node[vertex] (b'1) at (-0.5, -2) {};
		\path (b'1) ++(0,0.35) node {$b'_1$};
		\node[vertex] (b'2) at (0.5, -2) {};
		\path (b'2) ++(0,0.35) node {$b'_2$};
		\node[vertex] (b1) at (-0.5, -3) {};
		\path (b1) ++(-0.1,0.3) node {$b_1$};
		\node[vertex] (b2) at (0.5, -3) {};
		\path (b2) ++(0.1,0.3) node {$b_2$};
		\node (b1l) at (-1.5,-3) {};
		\node (b2r) at (1.5, -3) {};
		\node (b'1lb) at (-1.5, -1.5) {};
		\node (b'1lo) at (-1.5, -2.5) {};
		\node[label={20:\vdots}] (b'1lm) at (-1.5,-2.3) {};
		\node (b'2rb) at (1.5, -1.5) {};
		\node (b'2ro) at (1.5, -2.5) {};
		\node[label={160:\vdots}] (b'2rm) at (1.5,-2.3) {};
		\draw (al) to (a) to (ar);
		\draw (a) to (a');
		\draw (a') to (a'lb);
		\draw (a') to (a'lo);
		\draw (a') to (a'lm);
		\draw (a') to (a'rb);
		\draw (a') to (a'ro);
		\draw (a') to (a'rm);
		\draw (b'1) to (b'1lb);
		\draw (b'1) to (b'1lo);
		\draw (b'1) to (b'1lm);
		\draw (b'2) to (b'2rb);
		\draw (b'2) to (b'2ro);
		\draw (b'2) to (b'2rm);
		\draw (b'1) to (b2) to (b1) to (b'2);
		\draw (b1l) to (b1);
		\draw (b2r) to (b2);
		\draw[rounded corners] (-1.35,-3.2) rectangle (-3,0.2);
		\draw[rounded corners] (1.35,-3.2) rectangle (3,0.2);
		\node at (-2.2,-1.5) {$G[L]$};
		\node at (2.2,-1.5) {$G[R]$};
	\end{tikzpicture}
	\caption{Visualisation of the amalgam $G$ of $G_1$ and $G_2$.}
\end{figure}
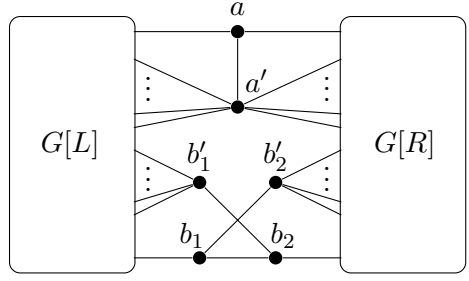
 
 In what follows, we show that under certain constraints this operation preserves non-hamiltonicity, $K_2$-hamiltonicity and planarity. To aid in the notation of the proofs, we define subsets of $V(G)$: $$L := V(G_1)\setminus \{a_1, a'_1, b_1, b'_1\}, \quad R:= V(G_2)\setminus \{a_2, a'_2, b_2, b'_2\}.$$

Let $S$ be a subset of $V(G) - L - R$ and $\mathfrak{h}$ be a cycle in $G$. Suppose we have a pair of edges $(lx, ry)$ such that $l\in L$, $r\in R$ and $x,y\in S$ such that $lx$ and $ry$ are edges in $\mathfrak{h}$ and $x = y$ or there is an $xy$-path in $\mathfrak{h}$ containing only vertices in $S$. Then we call this pair a \textit{traversal} of $\mathfrak{h}$ through $S$.

 Let $s_1,\ldots, s_4$ be all pairwise different traversals of some cycle $\mathfrak{h}$ through $\{a, a'\}$ up to symmetry, i.e.\ up to switching $L$ and $R$, defined in Fig.~\ref{fig:passingOptions}. Let $t_1,\ldots,t_4$ be all pairwise different traversals of some cycle $\mathfrak{h}$ through $\{b_1,b'_1,b_2,b'_2\}$ up to symmetry defined in Fig.~\ref{fig:passingOptions}.
For $s_i$, $i=1,\ldots,4$, we define the \textit{traversal number} $\phi(s_i)$ to be the number of traversals of $\mathfrak{h}$ through $\{a, a'\}$, e.g.\ $\phi(s_1) = 2$, $\phi(s_3) = 0$. Similarly for $t_i$, $i=1,\ldots,4$ we define $\phi(t_i)$ the be the number of traversals of $\mathfrak{h}$ through $\{b_1,b'_1,b_2,b'_2\}$, e.g.\ $\phi(t_1) = 1$.
 
\begin{figure}[htb]
\centering
\begin{minipage}{29mm}
\centering
		\begin{tikzpicture}[vertex/.style={circle,fill=black,minimum size=5pt,inner sep=0pt}]
		
			\node[vertex, label={[label distance=-2.5pt]$a$}](a) at (0,0) {};
			\node[vertex] (onder) at (0,-1) {};
			\path (a') ++(0.2,0.35) node {$a'$};
			\draw[color=gray] (a) to (a');
			\node (l) at (-1,0) {};
			\draw[color=gray] (l) to (a);
			\node (r) at (1,0) {};
			\draw[color=gray] (r) to (a);
			
			\node (lb) at (-1, -1.25) {};
			\draw[color=gray] (lb) to (a');
			\node (ls) at (-1, -1) {};
			\draw[color=gray] (ls) to (a');
			\node[color=gray] at (-0.8,-0.6) {$\vdots$};
			\node (lt) at (-1, -0.25) {};
			\draw[color=gray] (lt) to (a');
			
			\node (rb) at (1, -1.25) {};
			\draw[color=gray] (rb) to (a');
			\node (rs) at (1, -1) {};
			\draw[color=gray] (rs) to (a');
			\node[color=gray] at (0.8,-0.6) {$\vdots$};
			\node (rt) at (1, -0.25) {};
			\draw[color=gray] (rt) to (a');
			
			\draw[ultra thick] (l) to (a.center) to (r);
			\draw[ultra thick] (ls) to (onder.center) to (rs);
			
		\end{tikzpicture}\\[-3mm]
		$s_1$
\end{minipage}
\begin{minipage}{29mm}
\centering
		\begin{tikzpicture}[vertex/.style={circle,fill=black,minimum size=5pt,inner sep=0pt}]
			\node[vertex, label={[label distance=-2.5pt]$a$}](a) at (0,0) {};
			\node[vertex] (onder) at (0,-1) {};
			\path (a') ++(0.2,0.35) node {$a'$};
			\draw[color=gray] (a) to (a');
			\node (l) at (-1,0) {};
			\draw[color=gray] (l) to (a);
			\node (r) at (1,0) {};
			\draw[color=gray] (r) to (a);
			
			\node (lb) at (-1, -1.25) {};
			\draw[color=gray] (lb) to (a');
			\node (ls) at (-1, -1) {};
			\draw[color=gray] (ls) to (a');
			\node[color=gray] at (-0.8,-0.6) {$\vdots$};
			\node (lt) at (-1, -0.25) {};
			\draw[color=gray] (lt) to (a');
			
			\node (rb) at (1, -1.25) {};
			\draw[color=gray] (rb) to (a');
			\node (rs) at (1, -1) {};
			\draw[color=gray] (rs) to (a');
			\node[color=gray] at (0.8,-0.6) {$\vdots$};
			\node (rt) at (1, -0.25) {};
			\draw[color=gray] (rt) to (a');
			
			\draw[ultra thick] (l) to (a.center) to (r);
			\draw[ultra thick] (lt) to (a'.center) to (lb);
		\end{tikzpicture}
		\\[-3mm]
		$s_2$
\end{minipage}
\begin{minipage}{29mm}	
\centering
		\begin{tikzpicture}[vertex/.style={circle,fill=black,minimum size=5pt,inner sep=0pt}]
			\node[vertex, label={[label distance=-2.5pt]$a$}](a) at (0,0) {};
			\node[vertex] (onder) at (0,-1) {};
			\path (a') ++(0.2,0.35) node {$a'$};
			\draw[color=gray] (a) to (a');
			\node (l) at (-1,0) {};
			\draw[color=gray] (l) to (a);
			\node (r) at (1,0) {};
			\draw[color=gray] (r) to (a);
			
			\node (lb) at (-1, -1.25) {};
			\draw[color=gray] (lb) to (a');
			\node (ls) at (-1, -1) {};
			\draw[color=gray] (ls) to (a');
			\node[color=gray] at (-0.8,-0.6) {$\vdots$};
			\node (lt) at (-1, -0.25) {};
			\draw[color=gray] (lt) to (a');
			
			\node (rb) at (1, -1.25) {};
			\draw[color=gray] (rb) to (a');
			\node (rs) at (1, -1) {};
			\draw[color=gray] (rs) to (a');
			\node[color=gray] at (0.8,-0.6) {$\vdots$};
			\node (rt) at (1, -0.25) {};
			\draw[color=gray] (rt) to (a');
			
			\draw[ultra thick] (l) to (a.center) to (a'.center) to (ls);
		\end{tikzpicture}\\[-3mm]
		$s_3$
\end{minipage}
\begin{minipage}{29mm}
\centering
		\begin{tikzpicture}[vertex/.style={circle,fill=black,minimum size=5pt,inner sep=0pt}]
			\node[vertex, label={[label distance=-2.5pt]$a$}](a) at (0,0) {};
			\node[vertex] (onder) at (0,-1) {};
			\path (a') ++(0.2,0.35) node {$a'$};
			\draw[color=gray] (a) to (a');
			\node (l) at (-1,0) {};
			\draw[color=gray] (l) to (a);
			\node (r) at (1,0) {};
			\draw[color=gray] (r) to (a);
			
			\node (lb) at (-1, -1.25) {};
			\draw[color=gray] (lb) to (a');
			\node (ls) at (-1, -1) {};
			\draw[color=gray] (ls) to (a');
			\node[color=gray] at (-0.8,-0.6) {$\vdots$};
			\node (lt) at (-1, -0.25) {};
			\draw[color=gray] (lt) to (a');
			
			\node (rb) at (1, -1.25) {};
			\draw[color=gray] (rb) to (a');
			\node (rs) at (1, -1) {};
			\draw[color=gray] (rs) to (a');
			\node[color=gray] at (0.8,-0.6) {$\vdots$};
			\node (rt) at (1, -0.25) {};
			\draw[color=gray] (rt) to (a');
			
			\draw[ultra thick] (r) to (a.center) to (a'.center) to (ls);
		\end{tikzpicture}
		\\[-3mm]
		$s_4$
\end{minipage}

\begin{minipage}{29mm}
\centering
		\begin{tikzpicture}[vertex/.style={circle,fill=black,minimum size=5pt,inner sep=0pt}]
			\node[vertex, label={[label distance=-2.5pt]$b_1$}] (b1) at (-0.5,0) {};
			\node[vertex, label={[label distance=-2.5pt]$b_2$}] (b2) at (0.5,0) {};
			\node[vertex, label={[label distance=-2.5pt]$b'_1$}] (bi1) at (-0.5,1) {};
			\node[vertex, label={[label distance=-2.5pt]$b'_2$}] (bi2) at (0.5,1) {};
			\draw[color=gray] (bi1) to (b2) to (b1) to (bi2);
			\node (l) at (-1.25,0) {};
			\draw[color=gray] (l) to (b1); 
			\node (r) at (1.25,0) {};
			\draw[color=gray] (r) to (b2);
			\node (lt) at (-1.25,1.25) {};
			\draw[color=gray] (lt) to (bi1);
			\node (ls) at (-1.25,1) {};
			\draw[color=gray] (ls) to (bi1);
			\node[color=gray] at (-1.1, 0.8) {$\vdots$};
			\node (lb) at (-1.25,0.25) {};
			\draw[color=gray] (lb) to (bi1);
			
			\node (rt) at (1.25,1.25) {};
			\draw[color=gray] (rt) to (bi2);
			\node (rs) at (1.25,1) {};
			\draw[color=gray] (rs) to (bi2);
			\node[color=gray] at (1.1, 0.8) {$\vdots$};
			\node (rb) at (1.25,0.25) {};
			\draw[color=gray] (rb) to (bi2);
			
			\draw[ultra thick] (l) to (b1.center) to (b2.center) to (r);
			\draw[ultra thick] (lt) to (bi1.center) to (lb);
			\draw[ultra thick] (rt) to (bi2.center) to (rb);
		\end{tikzpicture}\\
		$t_1$
\end{minipage}
\begin{minipage}{29mm}
\centering
		\begin{tikzpicture}[vertex/.style={circle,fill=black,minimum size=5pt,inner sep=0pt}]
			\node[vertex, label={[label distance=-2.5pt]$b_1$}] (b1) at (-0.5,0) {};
			\node[vertex, label={[label distance=-2.5pt]$b_2$}] (b2) at (0.5,0) {};
			\node[vertex, label={[label distance=-2.5pt]$b'_1$}] (bi1) at (-0.5,1) {};
			\node[vertex, label={[label distance=-2.5pt]$b'_2$}] (bi2) at (0.5,1) {};
			\draw[color=gray] (bi1) to (b2) to (b1) to (bi2);
			\node (l) at (-1.25,0) {};
			\draw[color=gray] (l) to (b1); 
			\node (r) at (1.25,0) {};
			\draw[color=gray] (r) to (b2);
			\node (lt) at (-1.25,1.25) {};
			\draw[color=gray] (lt) to (bi1);
			\node (ls) at (-1.25,1) {};
			\draw[color=gray] (ls) to (bi1);
			\node[color=gray] at (-1.1, 0.8) {$\vdots$};
			\node (lb) at (-1.25,0.25) {};
			\draw[color=gray] (lb) to (bi1);
			
			\node (rt) at (1.25,1.25) {};
			\draw[color=gray] (rt) to (bi2);
			\node (rs) at (1.25,1) {};
			\draw[color=gray] (rs) to (bi2);
			\node[color=gray] at (1.1, 0.8) {$\vdots$};
			\node (rb) at (1.25,0.25) {};
			\draw[color=gray] (rb) to (bi2);
			\draw[ultra thick] (ls) to (bi1.center) to (b2.center) to (b1.center) to (bi2.center) to (rs);
		\end{tikzpicture}\\
		$t_2$
\end{minipage}
\begin{minipage}{29mm}
\centering
		\begin{tikzpicture}[vertex/.style={circle,fill=black,minimum size=5pt,inner sep=0pt}]
			\node[vertex, label={[label distance=-2.5pt]$b_1$}] (b1) at (-0.5,0) {};
			\node[vertex, label={[label distance=-2.5pt]$b_2$}] (b2) at (0.5,0) {};
			\node[vertex, label={[label distance=-2.5pt]$b'_1$}] (bi1) at (-0.5,1) {};
			\node[vertex, label={[label distance=-2.5pt]$b'_2$}] (bi2) at (0.5,1) {};
			\draw[color=gray] (bi1) to (b2) to (b1) to (bi2);
			\node (l) at (-1.25,0) {};
			\draw[color=gray] (l) to (b1); 
			\node (r) at (1.25,0) {};
			\draw[color=gray] (r) to (b2);
			\node (lt) at (-1.25,1.25) {};
			\draw[color=gray] (lt) to (bi1);
			\node (ls) at (-1.25,1) {};
			\draw[color=gray] (ls) to (bi1);
			\node[color=gray] at (-1.1, 0.8) {$\vdots$};
			\node (lb) at (-1.25,0.25) {};
			\draw[color=gray] (lb) to (bi1);
			
			\node (rt) at (1.25,1.25) {};
			\draw[color=gray] (rt) to (bi2);
			\node (rs) at (1.25,1) {};
			\draw[color=gray] (rs) to (bi2);
			\node[color=gray] at (1.1, 0.8) {$\vdots$};
			\node (rb) at (1.25,0.25) {};
			\draw[color=gray] (rb) to (bi2);
			
			\draw[ultra thick] (ls) to (bi1.center) to (b2.center) to (b1.center) to (l);
			\draw[ultra thick] (rt) to (bi2.center) to (rb);
		\end{tikzpicture}\\
		$t_3$
\end{minipage}
\begin{minipage}{29mm}
\centering
		\begin{tikzpicture}[vertex/.style={circle,fill=black,minimum size=5pt,inner sep=0pt}]
			\node[vertex, label={[label distance=-2.5pt]$b_1$}] (b1) at (-0.5,0) {};
			\node[vertex, label={[label distance=-2.5pt]$b_2$}] (b2) at (0.5,0) {};
			\node[vertex, label={[label distance=-2.5pt]$b'_1$}] (bi1) at (-0.5,1) {};
			\node[vertex, label={[label distance=-2.5pt]$b'_2$}] (bi2) at (0.5,1) {};
			\draw[color=gray] (bi1) to (b2) to (b1) to (bi2);
			\node (l) at (-1.25,0) {};
			\draw[color=gray] (l) to (b1); 
			\node (r) at (1.25,0) {};
			\draw[color=gray] (r) to (b2);
			\node (lt) at (-1.25,1.25) {};
			\draw[color=gray] (lt) to (bi1);
			\node (ls) at (-1.25,1) {};
			\draw[color=gray] (ls) to (bi1);
			\node[color=gray] at (-1.1, 0.8) {$\vdots$};
			\node (lb) at (-1.25,0.25) {};
			\draw[color=gray] (lb) to (bi1);
			
			\node (rt) at (1.25,1.25) {};
			\draw[color=gray] (rt) to (bi2);
			\node (rs) at (1.25,1) {};
			\draw[color=gray] (rs) to (bi2);
			\node[color=gray] at (1.1, 0.8) {$\vdots$};
			\node (rb) at (1.25,0.25) {};
			\draw[color=gray] (rb) to (bi2);
			
			\draw[ultra thick] (ls) to (bi1.center) to (b2.center) to (r);
			\draw[ultra thick] (rs) to (bi2.center) to (b1.center) to (l);
		\end{tikzpicture}\\
		$t_4$
\end{minipage}
 	\caption{All pairwise different traversals of some cycle $\mathfrak{h}$ (in black) through $\{a, a'\}$ (top row) and through $\{b_1, b'_1, b_2, b'_2\}$ (bottom row) up to symmetry. Edges to the left are going to some vertex in $L$, edges to the right are going to some vertex in $R$. \label{fig:passingOptions}}
\end{figure}
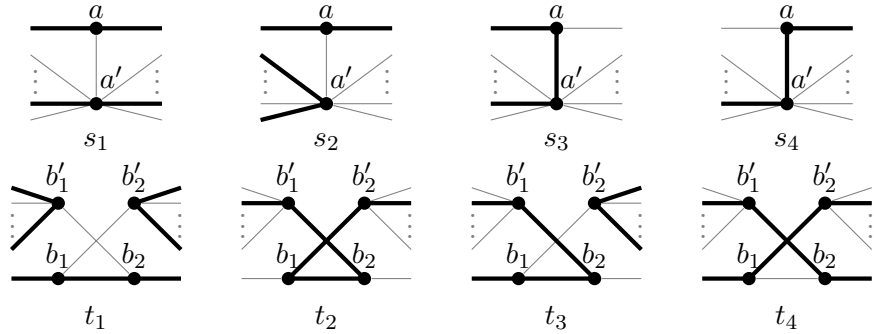

 \begin{lemma}\label{lem:nonHamiltonian}
 	Let $(G_i, a_i, a'_i, b_i, b'_i)$, for $i=1,2$, be two tuples satisfying the gluing property such that $G_1$ and $G_2$ are non-hamiltonian. Then the amalgam $G$ of $G_1$ and $G_2$ is non-hamiltonian.
 \end{lemma}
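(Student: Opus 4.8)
The plan is a proof by contradiction: assume the amalgam $G$ has a hamiltonian cycle $\mathfrak{h}$, and extract from it a hamiltonian cycle of $G_1$ or of $G_2$. First I would record what the gluing property forces inside each $G_i$. Since $b_i$ has degree three and $a'_ib_i\notin E(G_i)$, its neighbours are $a_i$, $b'_i$ and a third vertex lying in $V(G_i)\setminus\{a_i,a'_i,b_i,b'_i\}$; likewise $a_i$ has a third neighbour there, and every neighbour of $a'_i$ (resp.\ $b'_i$) other than $a_i$ (resp.\ $b_i$) lies there as well. In particular $L\neq\emptyset\neq R$, since otherwise $b_i$ would have degree two. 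A direct check of the amalgam then yields: (i) $S:=\{a,a'\}\cup\{b_1,b'_1,b_2,b'_2\}$ separates $L$ from $R$ in $G$, and $G$ has no edge between $\{a,a'\}$ and $\{b_1,b'_1,b_2,b'_2\}$; and (ii) $ab_1,b_1b'_1\in E(G_1)$ and $ab_2,b_2b'_2\in E(G_2)$, but none of these four is an edge of $G$ (they are exactly the edges deleted when forming $G$), whereas $aa'$ is an edge of $G_1$, of $G_2$, and of $G$.

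Next I would describe how $\mathfrak{h}$ meets the two ``gates'' $\{a,a'\}$ and $\{b_1,b'_1,b_2,b'_2\}$. By (i), every maximal subpath of $\mathfrak{h}$ inside $G[S]$ lies wholly in $\{a,a'\}$ or wholly in $\{b_1,b'_1,b_2,b'_2\}$, and inspecting the two $\mathfrak{h}$-edges at each gate vertex shows that, up to the symmetry exchanging $(G_1,L,b_1,b'_1)$ with $(G_2,R,b_2,b'_2)$, the part of $\mathfrak{h}$ meeting $\{a,a'\}$ is one of $s_1,\dots,s_4$ and the part meeting $\{b_1,b'_1,b_2,b'_2\}$ is one of $t_1,\dots,t_4$ of Fig.~\ref{fig:passingOptions}. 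Deleting $S$ from $\mathfrak{h}$ leaves a disjoint union of paths covering $L\cup R$; reading off the cyclic $L/R$-pattern along $\mathfrak{h}$, its number of colour changes is even and equals the total number of traversals $\phi(s_i)+\phi(t_j)$, which is moreover positive because $L$ and $R$ are non-empty. Hence $\phi(s_i)+\phi(t_j)$ is even and at least $2$, which already eliminates every pair $(s_i,t_j)$ whose sum is odd or zero.

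For each surviving pair I would restrict $\mathfrak{h}$ to $G[V(G_1)]$ (whose edge set is exactly $E_1$): this restriction is a system of vertex-disjoint paths covering $V(G_1)$ whose end-vertices all lie in $\{a,a',b_1,b'_1\}$, an isolated vertex counting as two endpoint slots, and the pair $(s_i,t_j)$ pins down exactly which vertices are endpoints and how many paths there are. I would then close this path system into a single spanning cycle by adding a suitable subset of the non-edges $ab_1$, $b_1b'_1$ of $G$, together with $aa'$ in case $\mathfrak{h}$ does not use $aa'$; since all of these are edges of $G_1$ and the paths use only edges of $E_1\subseteq E(G_1)$, the result is a hamiltonian cycle of $G_1$. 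For the pairs $(s_i,t_j)$ in which the $V(G_1)$-side does not close up cleanly --- essentially those in which an endpoint would have to be joined to $a'$ or to $b'_1$, which is impossible since $ab',a'b,a'b'\notin E$ --- the mirror construction on $G[V(G_2)]$, using the deleted edges $ab_2$, $b_2b'_2$ (and $aa'$) of $G_2$, produces a hamiltonian cycle of $G_2$. In both cases we contradict the non-hamiltonicity of $G_1$ and $G_2$.

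The main obstacle is the bookkeeping in this last step: one must check, for each admissible pair $(s_i,t_j)$, that the path-ends on the side one works with pair up so that adding the chosen edges gives a single spanning cycle rather than a union of several disjoint cycles, and that the edges required are always among the three available ones $ab_i$, $b_ib'_i$, $aa'$; this is exactly where the asymmetry of the gluing property ($ab,bb'\in E$ but $ab',a'b,a'b'\notin E$) is used, and it is also what decides on which of $G_1,G_2$ the argument has to be carried out. Exploiting the $L\leftrightarrow R$ symmetry roughly halves the number of genuinely different pairs, and each remaining pair is then a short finite verification.
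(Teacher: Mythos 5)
Your proposal is correct and follows essentially the same route as the paper's proof: classify how the hamiltonian cycle $\mathfrak{h}$ meets the two gates $\{a,a'\}$ and $\{b_1,b'_1,b_2,b'_2\}$ via the configurations of Fig.~\ref{fig:passingOptions}, discard the pairs with $\phi(s_i)+\phi(t_j)$ odd or zero, and close the restriction of $\mathfrak{h}$ to one side into a hamiltonian cycle of $G_i$ by adding suitable edges from $\{a_ib_i,\,b_ib'_i,\,aa'\}$, contradicting non-hamiltonicity. The only difference is that you defer the finite case check (seven pairs up to symmetry, all of which the paper in fact resolves on the $G_1$ side), which is precisely the verification the paper carries out explicitly.
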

 \begin{proof}
 	Suppose that $G$ does contain a hamiltonian cycle $\mathfrak{h}$. It must pass from $L$ to $R$ and back (at least once) via precisely one $s_i$ and one $t_i$ configuration of Fig.~\ref{fig:passingOptions}. For $i,j=1,\ldots,4$, we have that $\phi(s_i)+\phi(t_j)$ must be even and at least $2$ for all such $i$ and $j$. It remains to show all possible combinations up to symmetry lead to a contradiction.
 	
 	Suppose we are in the case of $s_1$ and $t_3$. Then $\mathfrak{h}$ consists of two $aa'$-paths, one of which contains $a$, $a'$, all vertices of $L$ and $b'_1, b_2$ and $b_1$. Removing $b_2$ and adding edges $aa'$ and $b_1b'_1$ to this path yields a hamiltonian cycle in $G_1$, a contradiction. In the case of $s_1$ and $t_4$, we can split $\mathfrak{h}$ into four paths, two of which are a $k_1l_1$-path and a $k_2l_2$-path where all interior vertices are in $L$ and $\{k_1,k_2\}=\{a,a'\}$ and $\{l_1,l_2\} = \{b_1, b'_1\}$. Adding $aa'$ and $b_1b'_1$ to the union of these two paths yields a hamiltonian cycle in $G_1$.
 	
 In the case of $s_2$ and $t_1$, we get an $ab_1$-subpath of $\mathfrak{h}$ where the interior vertices are vertices of $L$ or $a'$ or $b'_1$. Adding $ab_1$ to this path yields a hamiltonian cycle in $G_1$. In the case of $s_2$ and $t_2$ we get an $ab_1$-subpath of $\mathfrak{h}$ in which the interior vertices are vertices of $L$, $a'$, $b'_1$ or $b_2$. Removing $b_2$ and adding edges $b_1b'_1$ and $ab_1$ to this path yields a hamiltonian cycle in $G_1$.
 
In the case of $s_3$ and $t_4$, we get a $b_1b'_1$-subpath of $\mathfrak{h}$ in which the interior vertices are $a$, $a'$ and vertices of $L$. Adding edge $b_1b'_1$ to this path yields a hamiltonian cycle in $G_1$. 

In the case of $s_4$ and $t_1$, we get an $ab_1$-subpath of $\mathfrak{h}$ in which the interior vertices are $a', b'_1$ and vertices of $L$. Adding edge $ab_1$ to this path gives us a hamiltonian cycle in $G_1$. In the case of $s_4$ and $t_2$, we get an $ab_1$-subpath of $\mathfrak{h}$ in which the interior vertices are $a'$, $b'_1$, $b_2$ or vertices of $L$. Removing $b_2$ and adding edges $ab_1$ and $b_1b'_1$ to this path yields a hamiltonian cycle in $G_1$. 

Since the unmentioned cases have either $\phi(s_i)+\phi(t_j)$ odd or zero, we conclude that $G$ must be non-hamiltonian. 
 \end{proof}
 
 In the following proofs, we will often create a cycle $C$ in $G$ from cycles or paths in $G_1$ and $G_2$. To make this more concise we define the following notation. Let $H_1$ be a subgraph of $G_1$; $H_2$ be a subgraph of $G_2$; and $e_1, \ldots, e_m$ be edges in $G$. We can identify $H_1 - b_1b'_1 - a_1b_1$ with a subgraph of $G$. Similarly, we can identify $H_2 - b_2b'_2 - a_2b_2$ with a subgraph in $G$. We define $C(H_1,H_2, e_1, \ldots, e_m)$ to be the subgraph in $G$, which is the union of all these identified subgraphs in $G$ together with the edges $e_1, \ldots, e_m$. We allow this set of edges to be empty in which case we write $C(H_1,H_2)$.
 
\begin{lemma}\label{lem:K2Hamiltonian}
	Let $(G_i, a_i, a'_i, b_i, b'_i)$, for $i=1,2$, be two tuples satisfying the gluing property such that $G_1$ and $G_2$ are $K_2$-hamiltonian, $G_i - a'_i$ has at least two hamiltonian cycles, one containing the edge $b_ib'_i$ and one which does not, and $G_i - v$ is hamiltonian for all $v\in N_{G_i}[b_i]$. Then the amalgam $G$ of $G_1$ and $G_2$ is $K_2$-hamiltonian.
\end{lemma}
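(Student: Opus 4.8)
The plan is to show that for every edge $pq \in E(G)$, the graph $G - p - q$ is hamiltonian, by a case analysis on where $p$ and $q$ lie relative to the partition $V(G) = L \cup R \cup \{a, a', b_1, b'_1, b_2, b'_2\}$. The guiding principle throughout: a hamiltonian cycle of $G - p - q$ should be assembled as $C(H_1, H_2, e_1, \ldots, e_m)$ for suitable subgraphs $H_i \subseteq G_i$ (typically hamiltonian cycles or hamiltonian paths of $G_i$ or of a vertex- or edge-deleted $G_i$), glued along $b$-type vertices via the new edges $b_1b_2, b_1b'_2, b'_1b_2$. The hypotheses are tailored to supply exactly the $H_i$ we need: $K_2$-hamiltonicity of $G_i$ handles deletions of edges interior to one side; the assumption that $G_i - a'_i$ has a hamiltonian cycle through $b_ib'_i$ and one avoiding $b_ib'_i$ lets us control whether the $b_ib'_i$-edge of $H_i$ is used (which matters because $b_ib'_i \notin E(G)$ after amalgamation, but the new triangle $b_1b_2b'_1$, $b_1b'_2$ can simulate its effect on the other side); and $G_i - v$ hamiltonian for $v \in N_{G_i}[b_i]$ covers deletions touching $b_i$ and its neighbours.

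First I would dispose of the "symmetric" cases. If $\{p,q\} \subseteq V(G_1)$ with $p \ne a, a'$ (so the edge $pq$ corresponds to an edge of $G_1$ avoiding the identified vertices, or an edge at $b_1$), then $G_1 - p - q$ is hamiltonian by $K_2$-hamiltonicity of $G_1$ (or by the $G_1 - v$ hypothesis when $b_1 \in \{p,q\}$ and one needs care because $a_1b_1, b_1b'_1$ are not edges of $G$; here I would instead use a hamiltonian cycle or path of $G_1 - p - q$ and reroute through the $b_2, b'_2$ side). Concretely: take a hamiltonian cycle $\mathfrak{h}_1$ of $G_1 - p - q$; it visits $a = a_1$, $a' = a'_1$, and the vertices $b_1, b'_1$ (whichever survive). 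Then $H_2$ should be a hamiltonian path of $G_2$ between two of $\{b_2, b'_2\}$, or a hamiltonian cycle of $G_2 - a'_2$, chosen so that $C(\mathfrak{h}_1, H_2, \ldots)$ closes up correctly using $b_1b_2, b_1b'_2, b'_1b_2$. The case $\{p,q\} \subseteq V(G_2)$ is identical by symmetry; the case $p \in L$, $q \in R$ is impossible since $G$ has no such edges.

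Next come the genuinely mixed cases, where at least one of $p, q$ lies in $\{a, a', b_1, b'_1, b_2, b'_2\}$. The subcase $\{p,q\} = \{a, a'\}$ (recall $aa' \in E(G)$): here $G - a - a'$ is the disjoint-looking union of $G_1 - a_1 - a'_1$ and $G_2 - a_2 - a'_2$ joined only through the $b$-vertices, so I would take a hamiltonian path of $G_i - a_i - a'_i$ in each side (existence to be extracted from $K_2$-hamiltonicity of $G_i$ applied to the edge $a_ia'_i$, giving a hamiltonian cycle of $G_i - a_i - a'_i$, hence in particular a hamiltonian path) with endpoints among $\{b_i, b'_i\}$, and splice the two via $b_1b_2, b'_1b_2, b_1b'_2$. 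The subcases where one vertex is $a$ or $a'$ and the other is some $b$-type vertex, or where $\{p,q\}$ is an edge inside $\{b_1, b'_1, b_2, b'_2\}$ (the edges $b_1b_2$, $b_1b'_2$, $b'_1b_2$), are the most delicate: deleting, say, $b_1$ and $b_2$ removes the triangle that holds the two sides together, so the cycle of $G - b_1 - b_2$ must pass between $L$ and $R$ using only $b'_1b'_2$ — but $b'_1b'_2 \notin E(G)$! So that deletion must be handled differently: one needs $a, a'$ to carry the traversal, i.e.\ a hamiltonian path of $G_i - b_i$ (hypothesis: $G_i - v$ hamiltonian for $v \in N_{G_i}[b_i]$, and $b_i \in N_{G_i}[b_i]$) with suitable endpoints, glued at $a = a_1 = a_2$ and $a' = a'_1 = a'_2$. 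I expect this family of cases — edges and vertex-pairs sitting inside the "gluing gadget" — to be the main obstacle, because the available reconnection edges are few and one must verify in each instance that the endpoints of the chosen hamiltonian paths/cycles of $G_1 - (\cdot)$ and $G_2 - (\cdot)$ can actually be matched up through the surviving edges among $\{aa', b_1b_2, b_1b'_2, b'_1b_2\}$; the degree-three conditions on $a, b_1, b_2$ in the gluing property, which pin down exactly which edges at these vertices exist, are what make the bookkeeping close.

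Finally I would remark that in every case the constructed spanning subgraph is checked to be a single cycle (not a union of cycles) by tracking the traversals as in Figure~\ref{fig:passingOptions} and Lemma~\ref{lem:nonHamiltonian}: the parity constraint $\phi(s_i) + \phi(t_j)$ even forces the pieces to link into one cycle rather than two, and I would invoke exactly that reasoning to conclude each assembled $C(H_1, H_2, e_1, \ldots, e_m)$ is hamiltonian in $G - p - q$. Combined with Lemma~\ref{lem:nonHamiltonian} (which already gives non-hamiltonicity of $G$, though that is not needed for $K_2$-hamiltonicity per se), this establishes that $G$ is $K_2$-hamiltonian.
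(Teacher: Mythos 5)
Your plan follows the same route as the paper's proof: a case analysis on where the deleted adjacent pair sits relative to $L$, $R$ and the gluing gadget $\{a,a',b_1,b'_1,b_2,b'_2\}$, with a hamiltonian cycle of $G-p-q$ assembled as $C(H_1,H_2,e_1,\ldots,e_m)$ from pieces supplied by exactly the hypotheses you cite ($K_2$-hamiltonicity for pairs interior to one side, the two cycles of $G_i-a'_i$ with and without $b_ib'_i$ to control whether the spliced path enters the other side through one or three of the new edges, and $G_i-v$ for $v\in N_{G_i}[b_i]$ for pairs touching $b_i$). You also correctly isolate the one structurally different case, $\{p,q\}=\{b_1,b_2\}$, and its resolution via hamiltonian $a_ia'_i$-paths of $G_i-b_i$ glued at the identified vertices (which is the paper's $C(\mathfrak{h}_1,\mathfrak{h}_2)-aa'$; note the degree-$3$ condition on $a_i$ forces $a_ia'_i\in E(\mathfrak{h}_i)$, so these paths exist).

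That said, what you have written is a road map, not a proof: essentially all of the content of the paper's argument lies in the roughly seven explicit constructions, each of which requires choosing the right subcase (does $\mathfrak{h}_1$ use $a_1b_1$? does it use $b_1b'_1$?) and verifying that the resulting edge set is a single spanning cycle, and none of these is carried out. Two concrete points where the plan as stated would need repair when you fill it in. First, in the case $p,q\in L$ you propose taking for $H_2$ ``a hamiltonian path of $G_2$ between two of $\{b_2,b'_2\}$''; no such path is supplied by the hypotheses, and in fact it would cover $a$ and $a'$ a second time --- the correct objects are hamiltonian paths extracted from cycles of $G_2-a_2-a'_2$ or of $G_2-a'_2$ (with or without $b_2b'_2$), exactly as your alternative clause suggests. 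Second, the subcase ``one vertex is $a$ or $a'$ and the other is a $b$-type vertex'' is empty, since $a_ib_i$ is deleted in the amalgam and $a'_ib_i, a'_ib'_i, a_ib'_i\notin E(G_i)$ by the gluing property; conversely you should not forget the genuine cases $\{b_1,b'_2\}$ and $\{b_1,w\}$ for $w\in N_{G_1}(b_1)\cap L$, which the paper handles with separate constructions ending in ``$-\,b_1$''. Your closing suggestion to certify single-cycleness via the traversal-parity bookkeeping of Lemma~\ref{lem:nonHamiltonian} is workable but unnecessary once the cycles are written down explicitly, which is what the paper does.
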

\begin{proof}
	We show for all adjacent $u,v\in V(G)$ that $G - u - v$ is hamiltonian.
	
	Suppose $u,v\not\in\{a,a',b_i,b'_i \mid i=1,2\}$. Without loss of generality assume that $u,v\in L$. There exists a hamiltonian cycle $\mathfrak{h}_1$ in $G_1 - u - v$. Suppose $a_1b_1\not\in E(\mathfrak{h}_1)$. Then $\mathfrak{h}_1$ must contain $b_1b'_1$ as $b_1$ has degree $3$.
	 We also have a hamiltonian cycle $\mathfrak{h}_2$ in $G_2 - a_2 - a'_2$, which must contain $b_2b'_2$. Then $C(\mathfrak{h}_1, \mathfrak{h}_2, b_1b'_2, b'_1b_2)$ is a hamiltonian cycle in $G - u - v$.
	 Conversely, if $\mathfrak{h}_1$ does contain $a_1b_1$, it can either contain $b_1b'_1$ or not. Suppose the former, then let $\mathfrak{h}_2$ be a hamiltonian cycle containing $b_2b'_2$ in $G_2 - a'_2$. Then $C(\mathfrak{h}_1, \mathfrak{h}_2,b_1b_2,b_1b'_2,b'_1b_2)$ is a hamiltonian cycle in $G - u - v$. Finally, suppose that $\mathfrak{h}_1$ contains $a_1b_1$ but not $b_1b'_1$. Let $\mathfrak{h}_2$ be a hamiltonian cycle in $G_2 - a'_2$ not containing $b_2b'_2$, then $C(\mathfrak{h}_1, \mathfrak{h}_2, b_1b_2)$ is a hamiltonian cycle in $G - u - v$.
	
Suppose $u=a$ and $v\in N_G(a)$. Without loss of generality assume that $v\in L \cup \{a'\}$. Let $\mathfrak{h}_1, \mathfrak{h}_2$ be hamiltonian cycles in $G_1 - a_1 - v$ and $G_2 - a_2 - a'_2$, respectively. Then $C(\mathfrak{h}_1, \mathfrak{h}_2, b_1b'_2, b'_1b_2)$ is a hamiltonian cycle in $G - u - v$.

Suppose $u\neq a$, $v = a'$. Without loss of generality assume that $u\in L$. Let $\mathfrak{h}_1$ be a hamiltonian cycle in $G_1 - u - a'_1$. It can either contain $b_1b'_1$ or not. In the former case, let $\mathfrak{h}_2$ be a hamiltonian cycle in $G_2 - a'_2$ containing $b_2b'_2$. Then $C(\mathfrak{h}_1, \mathfrak{h}_2,b_1b_2,b_1b'_2,b'_1b_2)$ is a hamiltonian cycle in $G - u - v$. In the latter case, let $\mathfrak{h}_2$ be a hamiltonian cycle in $G_2 - a'_2$ not containing $b_2b'_2$. Then $C(\mathfrak{h}_1, \mathfrak{h}_2, b_1b_2)$ is a hamiltonian cycle in $G - u - v$.

Suppose $u = b_1$ and $v = b_2$. Let $\mathfrak{h}_1, \mathfrak{h}_2$ be hamiltonian cycles in $G_1 - b_1$ and $G_2 - b_2$, respectively. Then $C(\mathfrak{h}_1, \mathfrak{h}_2) - aa'$ is a hamiltonian cycle in $G - u - v$.

Suppose $u = b_1$ and $v = b'_2$. Let $\mathfrak{h}_1$ be a hamiltonian cycle in $G_1 - a'_1$ containing $b_1b'_1$ and let $\mathfrak{h}_2$ be a hamiltonian cycle in $G_2 - b'_2$. Then $C(\mathfrak{h}_1, \mathfrak{h}_2, b'_1b_2) - b_1$ is a hamiltonian cycle in $G - u - v$.

Suppose $u = b_1$ and $v\not\in\{b_2, b'_2\}$. Then $v\in L$. Let $\mathfrak{h}_1$ be a hamiltonian cycle in $G_1 - v$ and let $\mathfrak{h}_2$ be a hamiltonian cycle in $G_2 - a'_2$ not containing $b_2b'_2$. Then $C(\mathfrak{h}_1, \mathfrak{h}_2, b'_1b_2) - b_1$ is a hamiltonian cycle in $G - u - v$.

Finally, let $u\neq b_2$ and $v = b'_1$. Then $u\in L$. Let $\mathfrak{h}_1$ be a hamiltonian cycle in $G_1 - u - v$  and $\mathfrak{h}_2$ be a hamiltonian cycle in $G_2 - a'_2$ not containing $b_2b'_2$. Then $C(\mathfrak{h}_1, \mathfrak{h}_2, b_1b_2)$ is a hamiltonian cycle in $G - u - v$. 

Since all unmentioned cases are symmetric to the ones above, we conclude that $G$ is $K_2$-hamiltonian.
\end{proof}

\begin{lemma}\label{lem:preserveEdges}
	Let $(G_i, a_i, a'_i, b_i, b'_i)$, for $i=1,2$, be two tuples satisfying the conditions of Lemma~\ref{lem:nonHamiltonian} and $G$ be the amalgam of $G_1$ and $G_2$. For $v\in V(G_i - b_i)$, if $G_i - v$ is hamiltonian, so is $G - v$. Moreover, for any edge $e\in E(G_i - b_i)$ which is also an edge of $G$, if there is a hamiltonian cycle in $G_i - v$ containing $e$ (not containing $e$), then there exists a hamiltonian cycle in $G - v$ containing $e$ (not containing $e$). Lastly, $G - b_i$ is non-hamiltonian.
\end{lemma}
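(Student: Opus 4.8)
The plan is to prove the three assertions in turn, assuming throughout that $i=1$ (the case $i=2$ being symmetric). Write $c_1$ for the third neighbour of $b_1$ in $G_1$ and $c_2$ for the third neighbour of $b_2$ in $G_2$; the gluing property forces $c_1\in L$, $c_2\in R$, and also $|L|,|R|\geq 2$. Recall that in $G$ the edges $a_1b_1$ and $b_1b_1'$ of $G_1$ are absent, the edges $b_1b_2,b_1b_2',b_1'b_2$ are present, every other edge of $G_1$ and of $G_2$ survives, and $a_1a_1'=a_2a_2'$ is the only edge of $G$ lying in both $E(G_1)$ and $E(G_2)$.

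For the first two assertions, fix $v\in V(G_1)\setminus\{b_1\}$ and a hamiltonian cycle $\mathfrak{h}_1$ of $G_1-v$. Since $v\neq b_1$, the cycle $\mathfrak{h}_1$ meets $b_1$ in exactly two of the edges $a_1b_1,b_1b_1',b_1c_1$, hence uses at least one of the absent edges $a_1b_1,b_1b_1'$. I would split into the three cases determined by which two of these three edges $\mathfrak{h}_1$ uses, delete from $\mathfrak{h}_1$ exactly the absent edge(s) it uses, and then reconnect across the $G_2$-side by reproducing, for the matching sub-case, the splice $C(\mathfrak{h}_1,\mathfrak{h}_2,\dots)$ from the ``$u,v\in L$'' part of the proof of Lemma~\ref{lem:K2Hamiltonian}, with $\mathfrak{h}_2$ the appropriate hamiltonian cycle of $G_2-a_2'$ (one containing and one avoiding $b_2b_2'$) or of $G_2-a_2-a_2'$. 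Deleting the single vertex $v$ rather than two from the $G_1$-side changes nothing in that construction or its verification, so one gets a hamiltonian cycle $\mathfrak{h}$ of $G-v$. The extra point for the ``moreover'' part is that this surgery deletes from $\mathfrak{h}_1$ only edges incident with $b_1$ and adds only edges among $b_1b_2,b_1b_2',b_1'b_2$ together with edges of $\mathfrak{h}_2$; the latter have both endpoints in $R\cup\{a_2,a_2',b_2,b_2'\}$, and since each $\mathfrak{h}_2$ above avoids $a_2'$, none of them is $a_2a_2'$, so none lies in $E(G_1)$. Hence $E(\mathfrak{h})\cap E(G_1-b_1)=E(\mathfrak{h}_1)\cap E(G_1-b_1)$, and as every edge of $G_1-b_1$ survives in $G$, an edge $e\in E(G_1-b_1)$ lies on $\mathfrak{h}$ if and only if it lies on $\mathfrak{h}_1$; taking an $\mathfrak{h}_1$ that contains, resp.\ avoids, $e$ then gives an $\mathfrak{h}$ with the same property.

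For the last assertion, suppose for contradiction that $G-b_1$ has a hamiltonian cycle $\mathfrak{h}$. In $G-b_1$ the vertex $b_2$ has exactly the two neighbours $c_2\in R$ and $b_1'$, so $\mathfrak{h}$ contains the path $d-b_1'-b_2-c_2$, where $d$, the second neighbour of $b_1'$ on $\mathfrak{h}$, lies in $L$ because all other $G$-neighbours of $b_1'$ do. Deleting the consecutive vertices $b_1',b_2$ from $\mathfrak{h}$ leaves a path $P$ from $d$ to $c_2$ on $L\cup R\cup\{a,a',b_2'\}$. In the subgraph of $G$ induced on this set only $a$ and $a'$ have neighbours in both $L$ and $R$ (and $b_2'$ only in $R$), so $P$ crosses between $L$ and $R$ exactly once: it is a subpath $P_L$ ending at some $x\in\{a,a'\}$ and covering $L$, followed by a subpath $P_R$ from $x$ to $c_2$ covering $R\cup\{b_2'\}$, with the other of $a,a'$ lying on exactly one of $P_L,P_R$. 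This gives four cases, and in each one attaches $P_L$ or $P_R$ to a short path through $b_1$, resp.\ $b_2$, built from the $G_1$-edges $a_1a_1',a_1b_1,b_1b_1',b_1'd$, resp.\ the $G_2$-edges $a_2a_2',a_2b_2,b_2c_2$, to obtain a hamiltonian cycle of $G_1$ or of $G_2$, contradicting the non-hamiltonicity of $G_1$ and $G_2$. Thus $G-b_1$ is non-hamiltonian; this runs along the same lines as the proof of Lemma~\ref{lem:nonHamiltonian}.

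The hardest part should be the case bookkeeping for the first assertion: one must check, across the three cases for $\mathfrak{h}_1$ and including the positions $v\in\{a_1,a_1',b_1'\}$ where $G_1$-side degrees drop, that the required hamiltonian cycle of $G_2$ is available under the hypotheses and that each splice really is a single hamiltonian cycle (every vertex of degree two), and for the ``moreover'' part that no edge of $E(G_1-b_1)$ is ever inadvertently added or removed. The third assertion is comparatively routine once one notices that $b_2$ has degree two in $G-b_1$.
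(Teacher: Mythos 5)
Your proposal follows essentially the same route as the paper: the same three-way case split on which two of $b_1$'s edges the cycle $\mathfrak{h}_1$ uses, spliced with the same choices of hamiltonian cycles in $G_2 - a'_2$ or $G_2 - a_2 - a'_2$; the same observation that the surgery only alters edges incident with $b_1$ (giving the ``moreover'' part); and the same degree-two argument at $b_2$ followed by a four-case crossing analysis through $\{a,a'\}$ for the non-hamiltonicity of $G - b_1$. The one caveat you rightly flag --- and it applies equally to the paper's own proof --- is that the hamiltonian cycles of $G_2 - a'_2$ (with and without $b_2b'_2$) and of $G_2 - a_2 - a'_2$ invoked in the first part are not literally supplied by the stated hypotheses (the conditions of Lemma~\ref{lem:nonHamiltonian} alone), but only by the stronger hypotheses of Lemma~\ref{lem:K2Hamiltonian} under which the lemma is actually applied.
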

\begin{proof}
Without loss of generality, let $i = 1$. Let $v\in V(G_1 - b_1)$ and suppose $\mathfrak{h}_1$ is a hamiltonian cycle in $G_1 - v$. 
	
	Assume $a_1b_1\not\in E(\mathfrak{h}_1)$. Let $\mathfrak{h}_2$ be a hamiltonian cycle in $G_2 - a_2 - a'_2$. We see that $C(\mathfrak{h}_1, \mathfrak{h}_2, b_1b'_2,b'_1b_2)$ is a hamiltonian cycle in $G - v$. 	
	Conversely, suppose that $a_1b_1\in E(\mathfrak{h}_1)$. Then either $b_1b'_1\in E(\mathfrak{h}_1)$ or not. Suppose the former and let $\mathfrak{h}_2$ be a hamiltonian cycle in $G_2 - a'_2$ containing $b_2b'_2$. Then $C(\mathfrak{h}_1, \mathfrak{h}_2, b_1b_2, b_1b'_2, b'_1b_2)$ is a hamiltonian cycle in $G - v$. Suppose the latter and let $\mathfrak{h}_2$ be a hamiltonian cycle in $G_2 - a'_2$ not containing $b_2b'_2$. Then $C(\mathfrak{h}_1, \mathfrak{h}_2, b_1b_2)$ is a hamiltonian cycle in $G - v$.
	
	Note that by construction of these cycles any edge of $G_1$ on $\mathfrak{h}_1$ which is an edge in $G$ lies on the hamiltonian cycle of $G - v$ and any edge of $G_1$ not on $\mathfrak{h}_1$ does not lie on the hamiltonian cycle in $G - v$.
	
	Now suppose there exists a hamiltonian cycle $\mathfrak{h}$ in $G - b_1$. Then $b'_1b_2\in E(\mathfrak{h})$ and $ub'_2w$ is a subpath of $\mathfrak{h}$, where $u$ and $w$ are vertices of $R$. Denote the traversals of $\mathfrak{h}$ through $\{b'_1,b_2,b'_2\}$ by $t_5$. Then $\phi(t_5) = 1$. We show that all a priori possible traversals of $\mathfrak{h}$ through $\{a, a'\}$ lead to a contradiction. Since $\phi(s_i) + \phi(t_5)$ is even and at least $2$, we only need to look at $s_2$, $s_4$ and their mirror images, i.e.\ with $L$ and $R$ swapped.
	
	Assume we are in case $s_2$ or $s_4$ and $t_5$, then the intersection of $\mathfrak{h}$ with $G_1$ yields an $a_1b'_1$-path in which the interior vertices are precisely $a'_1$ and the vertices of $L$. Adding edges $b_1b'_1$ and $a_1b_1$  to this path yields a hamiltonian cycle in $G_1$, a contradiction.
	
	Suppose we are in the case where $s_2$ is mirrored or $s_4$ is mirrored and $t_5$. Then the intersection of $\mathfrak{h}$ with $G_2$ yields an $a_2b_2$-path in which the interior vertices are precisely $a'_2$, $b'_2$ and all vertices of $R$. Adding edge $a_2b_2$ to this path leads to a hamiltonian cycle in $G_2$, a contradiction.
\end{proof}

This lemma shows that the operation, when applied to hypohamiltonian graphs, never yields a hypohamiltonian graph.

 Let $G$ be a planar graph and let $V_F$ be the set of all vertices that lie on the boundary of a face $F$ in a plane embedding of $G$. We say vertices $v_0, \ldots, v_{k-1} \in V(G)$ are \emph{co-facial} in a plane embedding of $G$ if they all belong to the same set $V_F$ for some face $F$ of the embedding. We say $v_0,\ldots,v_{k-1}$ \emph{appear in order} in $F$ if for every  $i=0,\ldots, k-1$ there is a $v_iv_{i+1}$-path, taking indices $\bmod\:k$, containing only edges on the boundary of $F$ such that this path does not contain any other $v_j$ for $j=0,\ldots, k-1$. 
 
 \begin{lemma}\label{lem:planar}
 	Let $(G_i, a_i, a'_i, b_i, b'_i)$, for $i=1,2$, be two tuples satisfying the gluing property such that $G_1$ and $G_2$ are planar. If $G_1$ admits a plane embedding such that $a'_1, a_1, b_1, b'_1$ are co-facial and appear in this order and $G_2$ admits a plane embedding such that $a_2, a'_2, b_2, b'_2$ are not co-facial, then the amalgam $G$ of $G_1$ and $G_2$ is a planar graph.
 \end{lemma}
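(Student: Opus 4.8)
The plan is to assemble a plane embedding of $G$ from plane embeddings of $H_1:=G_1-a_1b_1-b_1b'_1$ and $H_2:=G_2-a_2b_2-b_2b'_2$. Writing $a:=a_1=a_2$ and $a':=a'_1=a'_2$, recall that $G$ is obtained from the disjoint union $H_1\sqcup H_2$ by identifying the two copies of $a$ and of $a'$ (so that $E(H_1)$ and $E(H_2)$ overlap only in the edge $aa'$) and adding the three crossing edges $b_1b_2$, $b_1b'_2$, $b'_1b_2$. I would embed $H_1$ with a face $\Phi$ carrying $a',a,b_1,b'_1$ on its boundary in this cyclic order, taken as the outer face; embed $H_2$ with a face $\Psi$ carrying $a,a',b_2,b'_2$ on its boundary in this cyclic order; then draw $H_2$ (minus the edge $aa'$, already present in $H_1$) inside $\Phi$, pinning its copies of $a,a'$ to those of $H_1$ and reflecting $H_2$ if necessary so that $b_1,b'_1$ and $b_2,b'_2$ all lie on a single face $\Omega$ of the resulting plane graph, appearing along $\partial\Omega$ in the cyclic order $a,b_1,b'_1,a',b_2,b'_2$. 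Finally I would route the three crossing edges inside $\Omega$: viewing $\partial\Omega$ as a hexagon with corners labelled in that order, the chords $b'_1b_2$, $b_2b_1$, $b_1b'_2$ are pairwise non-crossing — consecutive chords share a vertex, and the pair $\{b'_1b_2,\,b_1b'_2\}$ does not interleave on the hexagon — so they can be drawn simultaneously. This reduces the lemma to producing the faces $\Phi$ and $\Psi$.

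For $\Phi$: the face of $G_1$ supplied by the hypothesis contains $b_1$, hence is one of the three faces incident to the cubic vertex $b_1$ (the natural case being the face bounded at $b_1$ by $a_1b_1$ and $b_1b'_1$). Deleting $a_1b_1$ and $b_1b'_1$ turns $b_1$ into a pendant vertex and merges the faces at $b_1$ into a single face $\Phi$ of $H_1$; tracing its boundary walk, the deletions only splice in vertices of $L$ and a pendant excursion to $b_1$, so $a',a,b_1,b'_1$ still occur along $\partial\Phi$ in the order $a',a,b_1,b'_1$.

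Producing $\Psi$ is where the hypothesis on $G_2$ is used, and I expect this to be the main obstacle. Fix a plane embedding of $G_2$ in which $a_2,a'_2,b_2,b'_2$ are not co-facial. Since $b_2$ is cubic with $a_2b_2,b_2b'_2\in E(G_2)$, there is a unique face $F_\alpha$ bounded at $b_2$ by these two edges, and $\partial F_\alpha\supseteq\{a_2,b_2,b'_2\}$; non-co-faciality forces $a'_2\notin\partial F_\alpha$. Since $a_2$ is cubic with $a'_2a_2,a_2b_2\in E(G_2)$, there is a unique face $F_\gamma$ bounded at $a_2$ by these two edges; then $\partial F_\gamma$ contains the path $a'_2a_2b_2$, so $F_\gamma$ is incident to $b_2$, and $F_\gamma\neq F_\alpha$ (otherwise $a'_2\in\partial F_\alpha$, a contradiction). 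Hence $F_\gamma$ is a second face at $b_2$, with $a'_2$ sitting on $\partial F_\gamma$ immediately next to $a_2$ along the edge $a_2a'_2$ and on the side away from $b_2$. Deleting $a_2b_2$ and $b_2b'_2$ makes $b_2$ pendant and merges the three faces at $b_2$ into one face $\Psi$ of $H_2$; the forced position of $a'_2$ is precisely what is needed so that, tracing $\partial\Psi$, one meets in order $a'_2$, then $a_2$, then (through vertices of $R$ and a pendant excursion to $b_2$) $b'_2$, then $b_2$ — that is, $a_2,a'_2,b_2,b'_2$ appear along $\partial\Psi$ in the cyclic order $a',a,b'_2,b_2$, which equals $a,a',b_2,b'_2$ after reversing the orientation of the embedding, hence is the order required in the first paragraph. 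Carrying out this boundary trace carefully — bookkeeping the contributions of $F_\alpha$, $F_\gamma$ and the remaining face at $b_2$ as they are spliced together — is the technical core of the proof.

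With $\Phi$ and $\Psi$ in hand, the construction of the first paragraph yields a plane embedding of $G$, so $G$ is planar. The points one still has to watch are low-connectivity degeneracies: if one of the four deleted edges is a bridge of the corresponding $G_i$, or if the face realising the $G_1$-hypothesis is not the expected one at $b_1$, the boundary traces need a small extra case distinction, and one must also check that $H_2$ genuinely fits inside $\Phi$; in the intended application these issues disappear because $G_1$ and $G_2$ are $3$-connected, which pins their embeddings down up to reflection.
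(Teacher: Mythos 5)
Your proposal is correct and follows essentially the same construction as the paper: delete $a_ib_i$ and $b_ib'_i$ to merge the faces at the cubic vertex $b_i$, embed $H_2$ in the face of $H_1$ carrying $a',a,b_1,b'_1$, identify $a$ and $a'$, and add the three cross edges as pairwise non-crossing chords of the resulting hexagonal face. You are in fact more explicit than the paper about the two delicate points it delegates to its figure — the non-crossing check of the chords $b_1b_2,b_1b'_2,b'_1b_2$ and the role of the non-co-faciality hypothesis in forcing the cyclic order $a'_2,a_2,b'_2,b_2$ on the merged face of $H_2$ — and your deferred boundary trace does go through as you describe.
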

 \begin{proof}
 As $a_2$ and $b_2$ have degree $3$ in $G_2$, $a'_2, a_2, b_2$ and $a_2, b_2, b'_2$ are co-facial in $G_2$ and appear in these orders in their respective faces.
 Take a plane embedding of $G_1$ for which the vertices $a_1, a'_1, b_1, b'_1$ lie on the outer face. Note that this is always possible.
 	Hence, since $G_1$ and $G_2$ are plane, removing $a_ib_i$ and $b_ib'_i$ from $G_i$ for $i=1,2$ yields two plane graphs such that $a_i, a'_i, b_i, b'_i$ are co-facial for $i=1,2$. This is depicted at the top of Figure~\ref{fig:amalgam}. Denote these new graphs by $G'_1$ and $G'_2$, respectively. Identifying $a_1$ with $a_2$ and $a'_1$ with $a'_2$ in the disjoint union of $G'_1$ and $G'_2$ also yields a plane graph in which the following vertices are co-facial and appear in order $a', b'_1, b_1, a, b'_2, b_2$ as can be seen at the bottom of Figure~\ref{fig:amalgam}. Hence, we can add edges $b_1b_2$, $b_1b'_2$ and $b'_1b_2$ without creating any crossings.   
 \end{proof}
 
 \begin{figure}[!htb]
		\centering
		\begin{tikzpicture}[vertex/.style={circle,fill=black,minimum size=5pt,inner sep=0pt}]
		
			\node[vertex, label=right:{$a'_1$}](la') at (-2,2) {};
			\node[vertex, label=right:{$a_1$}](la) at (-2,1) {};
			\node[vertex, label={$b_1$}](lb) at (-2,-0.5) {};
			\node[vertex, label={$b'_1$}](lb') at (-2,-2) {};
			\draw (la') to (la);
%			 to (lb) to (lb');
			\path (la) ++(-1,0) node (lal) {};
			\draw (la) to (lal);
			\path (lb) ++(-1,0) node (lbl) {};
			\draw (lb) to (lbl);
			\path (la') ++(-1,0.5) node (la't) {};
			\path (la') ++(-1,-0.5) node (la'b) {};
			\path (la') ++(-0.8,0.1) node {$\vdots$};
			\path (la't) ++(0,0.3) node {$G_1 - a_1b_1 - b_1b'_1$};
			\draw (la't) to (la') to (la'b);
			\path (lb') ++(-1,0.5) node (lb't) {};
			\path (lb') ++(-1,-0.5) node (lb'b) {};
			\path (lb') ++(-0.8,0.1) node {$\vdots$};
			\draw (lb't) to (lb') to (lb'b);
			\draw[dashed, bend left] (lb'b) to[out=90, in=90] (la't);
			\draw[dashed, bend left] (lal) to[out=-90, in=-90] (lbl);
			\draw[dashed, bend left] (lbl) to[out=-90, in=-90] (lb't);
		
			\node[vertex, label=left:{$a'_2$}](ra') at (1,2) {};
			\node[vertex, label=right:{$a_2$}](ra) at (1,1) {};
			\node[vertex, label={$b_2$}](rb) at (1,-0.5) {};
			\node[vertex, label={$b'_2$}](rb') at (1,-2) {};
			\draw (ra') to (ra);
%			 to (rb) to (rb');
			\path (ra') ++(0,0.8) node {$G_2 - a_2b_2 - b_2b'_2$};
			\path (rb) ++(-1,0.3) node {$F$};
			\path (ra) ++(-1,0) node (ral) {};
			\draw (ra) to (ral);
			\path (rb) ++(1,0) node (rbr) {};
			\draw (rb) to (rbr);
			\path (ra') ++(1,0.5) node (ra't) {};
			\path (ra') ++(1,-0.5) node (ra'b) {};
			\path (ra') ++(0.8,0.1) node {$\vdots$};
			\draw (ra't) to (ra') to (ra'b);
			\path (rb') ++(1,0.5) node (rb't) {};
			\path (rb') ++(1,-0.5) node (rb'b) {};
			\path (rb') ++(0.8,0.1) node {$\vdots$};
			\draw (rb't) to (rb') to (rb'b);
			\node (t) at (1,4) {};
			\node (b) at (1,-3) {};
			\draw[dashed, bend left] (t) to[out=-90, in=-90] (b);
			\draw[dashed, bend left] (t) to[out=90, in=90] (b);
			\draw[dashed, bend left] (rbr) to[out=-90, in=-90] (ra'b);
			\draw[dashed, bend left] (ral) to[out=-90, in=-90] (rb'b);
			\draw[dashed, bend left] (rbr) to[out=90, in=90] (rb't);
		\end{tikzpicture}
		\begin{tikzpicture}[vertex/.style={circle,fill=black,minimum size=5pt,inner sep=0pt}]
		
			\node[vertex, label=right:{$a'_1$}](la') at (-2,2) {};
			\node[vertex, label=right:{$a_1$}](la) at (-2,1) {};
			\node[vertex, label={$b_1$}](lb) at (-2,-0.5) {};
			\node[vertex, label={$b'_1$}](lb') at (-2,-2) {};
			\draw (la') to (la);
%			 to (lb) to (lb');
			\path (la) ++(-1,0) node (lal) {};
			\draw (la) to (lal);
			\path (lb) ++(-1,0) node (lbl) {};
			\draw (lb) to (lbl);
			\path (la') ++(-1,0.5) node (la't) {};
			\path (la') ++(-1,-0.5) node (la'b) {};
			\path (la') ++(-0.8,0.1) node {$\vdots$};
			\path (la't) ++(0,0.3) node {$G_1-a_1b_1 - b_1b'_1$};
			\draw (la't) to (la') to (la'b);
			\path (lb') ++(-1,0.5) node (lb't) {};
			\path (lb') ++(-1,-0.5) node (lb'b) {};
			\path (lb') ++(-0.8,0.1) node {$\vdots$};
			\draw (lb't) to (lb') to (lb'b);
			\draw[dashed, bend left] (lb'b) to[out=90, in=90] (la't);
			\draw[dashed, bend left] (lal) to[out=-90, in=-90] (lbl);
			\draw[dashed, bend left] (lbl) to[out=-90, in=-90] (lb't);
		
			\node[vertex, label=left:{$a'_2$}](ra') at (1,2) {};
			\node[vertex, label=left:{$a_2$}](ra) at (1,1) {};
			\node[vertex, label={$b_2$}](rb) at (0,-2) {};
			\node[vertex, label={$b'_2$}](rb') at (0,-0.5) {};
			\draw (ra') to (ra);
%			 to (rb) to (rb');
			\path (ra) ++(1,0) node (rar) {};
			\draw (ra) to (rar);
			\path (rb) ++(1,0) node (rbr) {};
			\draw (rb) to (rbr);
			\path (ra') ++(1,0.5) node (ra't) {};
			\path (ra') ++(1,-0.5) node (ra'b) {};
			\path (ra') ++(0.8,0.1) node {$\vdots$};
			\path (ra't) ++(0,0.3) node {$G_2 - a_2b_2 - b_2b'_2$};
			\path(rar) ++(1.5,-0.5) node {$F$};
			\draw (ra't) to (ra') to (ra'b);
			\path (rb') ++(1,0.5) node (rb't) {};
			\path (rb') ++(1,-0.5) node (rb'b) {};
			\path (rb') ++(0.8,0.1) node {$\vdots$};
			\draw (rb't) to (rb') to (rb'b);
			\draw[dashed] (ra) to (rb't);
			\draw[dashed] (rb'b) to (rbr);
			\draw[dashed, bend left] (rbr) to[out=-90, in=-90] (ra't);
%			\draw[dashed, bend left] (ral) to[out=-90, in=-90] (rb'b);
%			\draw[dashed, bend left] (rbr) to[out=90, in=0] (rb't);
	\draw[dotted] (rb') to (lb) to (rb) to (lb');
		\end{tikzpicture}
		\caption{A visualisation of the amalgam of $G_1$ and $G_2$. Dashed lines represent (part of) the boundary of some face in the embedding. The top image depicts $G_1$, with $a'_1, a_1, b_1, b'_1$ lying on the boundary of the outer face, and $G_2$ after the removal of $a_ib_i$ and $b_ib'_i$. We see that $a'_1, a_1, b_1, b'_1$ still lie on the outer face and that $a'_2, a_2, b_2, b'_2$ now lie on the boundary of some face $F$. In the bottom image we have taken the embedding of $G_2 - a_2b_2 - b_2b'_2$ for which $F$ is the outer face. It is now easily seen that the identification of $a'_1$ with $a'_2$ and $a_1$ with $a_2$ leaves a plane graph and that the addition of the dotted edges $b_1b_2, b'_1b_2$ and $b_1 b'_2$ does as well.}
		\label{fig:amalgam}
	\end{figure}
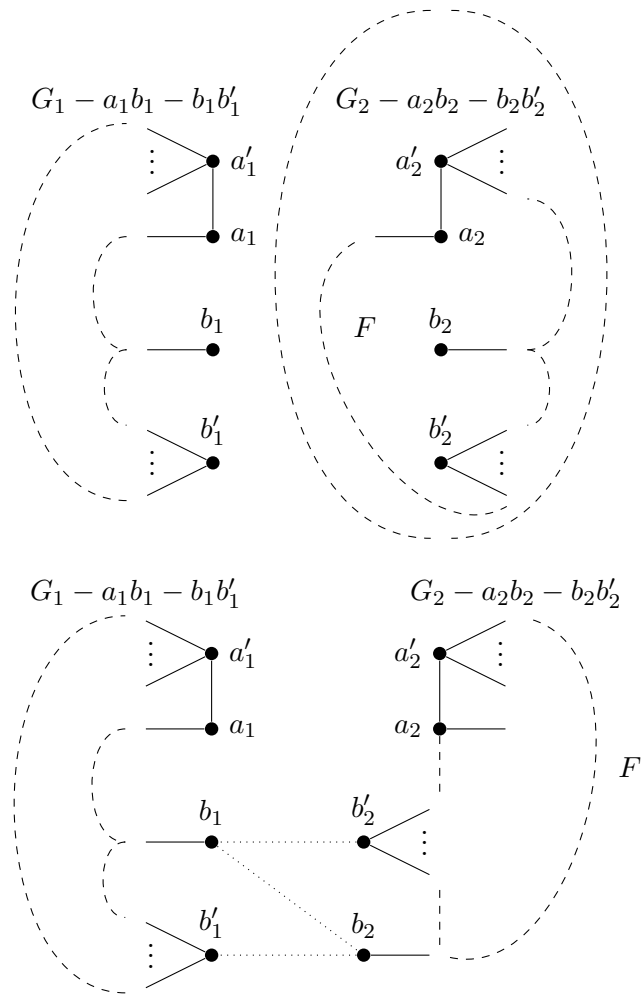
 
 In~\cite{GRWZ22}, the authors showed the existence of a planar $K_2$-hypohamiltonian graph for every order from order $177$ onwards. Using the operation described above, we can improve this bound. 
 
Consider a graph $G$ containing a 5-cycle $C = v_0 \ldots v_4$ such that every $v_i$ is cubic. Denote the neighbour of $v_i$ not on $C$ by $v'_i$. Then the cycle $C$ is called \emph{extendable} if for any $i$, taking indices $\bmod \; 5$, (i)~there exists a hamiltonian cycle $\mathfrak{h}$ in $G - v_i$ with $v_{i-2}v_{i+2} \notin E(\mathfrak{h})$ and (ii)~there exists a hamiltonian cycle $\mathfrak{h}'$ in $G - v'_i$ with $\mathfrak{h}' \cap C = v_{i-2}v_{i-1}v_iv_{i+1}v_{i+2}$. From Lemma~\ref{lem:preserveEdges} we immediately get the following.

\begin{corollary}\label{cor:extCycle}
	Let $(G_i, a_i, a'_i, b_i, b'_i)$, for $i=1,2$, be two tuples satisfying the conditions of Lemma~\ref{lem:preserveEdges} such that $G_1$ contains an extendable $5$-cycle $v_0\ldots v_4$. If $a_1, a'_1, b_1$ and $b'_1$ are disjoint from $v_0, \ldots, v_4$ and their neighbours, then the amalgam $G$ of $G_1$ and $G_2$ contains an extendable $5$-cycle.
\end{corollary}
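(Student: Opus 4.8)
The plan is to show that the given $5$-cycle $C=v_0\ldots v_4$, now regarded as a subgraph of the amalgam $G$, is itself extendable; the two hamiltonicity requirements in the definition of extendability will be pulled back from the corresponding ones in $G_1$ via Lemma~\ref{lem:preserveEdges}.

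The preliminary observation I would record is that $C$ survives the amalgam operation untouched. By hypothesis $a_1,a'_1,b_1,b'_1$ avoid $v_0,\ldots,v_4$ and their neighbours, so in particular $v_0,\ldots,v_4,v'_0,\ldots,v'_4$ all lie in $L=V(G_1)\setminus\{a_1,a'_1,b_1,b'_1\}$. The amalgam deletes only $a_1b_1,b_1b'_1$ (and $a_2b_2,b_2b'_2$), adds only $b_1b_2,b_1b'_2,b'_1b_2$, and identifies $a_1$ with $a_2$ and $a'_1$ with $a'_2$; none of these edges has an endpoint in $L$, and no vertex of $L$ is identified. Hence every vertex of $L$ — in particular each $v_i$ and each $v'_i$ — has exactly the same incident edges in $G$ as in $G_1$. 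Consequently $C$ is still a $5$-cycle of $G$, each $v_i$ is still cubic in $G$, the off-cycle neighbour of $v_i$ in $G$ is still $v'_i$, and every edge of $C$ (including $v_{i-2}v_{i+2}$, which — reading indices modulo $5$ — is the cycle edge $v_{i-3}v_{i-2}$) is an edge of $G_1-b_1$ that is also an edge of $G$. Finally $v_i,v'_i\in L\subseteq V(G_1-b_1)$, so Lemma~\ref{lem:preserveEdges} applies with $v=v_i$ and with $v=v'_i$.

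For condition (i), I would fix $i$: extendability of $C$ in $G_1$ gives a hamiltonian cycle $\mathfrak{h}_1$ of $G_1-v_i$ with $v_{i-2}v_{i+2}\notin E(\mathfrak{h}_1)$, and since $v_{i-2}v_{i+2}$ is an edge of $G_1-b_1$ also present in $G$, the ``does not contain $e$'' part of Lemma~\ref{lem:preserveEdges} yields a hamiltonian cycle $\mathfrak{h}$ of $G-v_i$ with $v_{i-2}v_{i+2}\notin E(\mathfrak{h})$. For condition (ii), again fix $i$: extendability of $C$ in $G_1$ gives a hamiltonian cycle $\mathfrak{h}'_1$ of $G_1-v'_i$ with $\mathfrak{h}'_1\cap C=v_{i-2}v_{i-1}v_iv_{i+1}v_{i+2}$, i.e. $\mathfrak{h}'_1$ uses the four $C$-edges $v_{i-2}v_{i-1},v_{i-1}v_i,v_iv_{i+1},v_{i+1}v_{i+2}$ and omits the fifth one $v_{i+2}v_{i-2}$. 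All five edges of $C$ belong to $E(G_1-b_1)$ and survive in $G$, so transferring $\mathfrak{h}'_1$ through Lemma~\ref{lem:preserveEdges} should produce a hamiltonian cycle $\mathfrak{h}'$ of $G-v'_i$ using exactly those four $C$-edges and not the fifth, i.e. $\mathfrak{h}'\cap C=v_{i-2}v_{i-1}v_iv_{i+1}v_{i+2}$. Once both conditions are verified, $C$ is an extendable $5$-cycle of $G$.

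The one delicate point, and the place I expect to have to argue carefully, is the final appeal to Lemma~\ref{lem:preserveEdges} in the verification of (ii): its edge-transfer clause is phrased for a single edge, whereas (ii) constrains all five edges of $C$ simultaneously. To bridge this I would not quote the statement verbatim but rather its proof — specifically the closing remark there that the hamiltonian cycle built from $\mathfrak{h}'_1$ keeps precisely the edges of $\mathfrak{h}'_1$ that survive in $G$ — or, equivalently, observe directly that none of the construction cases in that proof modifies any edge of $G_1$ disjoint from $\{a_1,b_1,b'_1\}$, which is exactly the situation for every edge of $C$. Granting this, the corollary reduces to the bookkeeping in the paragraphs above and I foresee no further obstacle.
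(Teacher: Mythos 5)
Your proposal is correct and matches the paper's intent exactly: the paper offers no separate argument, stating only that the corollary follows immediately from Lemma~\ref{lem:preserveEdges}, and your write-up is precisely the careful expansion of that deduction (the disjointness hypothesis keeps the cycle, its vertices' degrees and the off-cycle neighbours intact inside $L$, and the edge-transfer clause pulls conditions (i) and (ii) across). Your handling of the only delicate point --- invoking the closing remark in the proof of Lemma~\ref{lem:preserveEdges} to transfer all five cycle edges simultaneously rather than one at a time --- is exactly the right justification.
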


\begin{figure}[!htb]
	\centering
	\begin{minipage}{36mm}
	\begin{tikzpicture}[scale=0.034]
		\definecolor{marked}{rgb}{0.25,0.5,0.25}
		\node [circle,fill,scale=0.25] (50) at (38.980281,41.636865) {};
		\node [circle,fill,scale=0.25] (49) at (45.744355,47.512005) {};
		\node [circle,fill,scale=0.25] (48) at (48.339635,41.125983) {};
		\node [circle,fill,scale=0.25] (47) at (40.860326,34.668438) {};
		\node [circle,fill,scale=0.25] (46) at (32.052724,34.014509) {};
		\node [circle,fill,scale=0.25] (45) at (23.725352,40.594666) {};
		\node [circle,fill,scale=0.25] (44) at (25.503221,48.022887) {};
		\node [circle,fill,scale=0.25] (43) at (32.165119,54.429344) {};
		\node [circle,fill,scale=0.25] (42) at (38.162870,59.047716) {};
		\node [circle,fill,scale=0.25] (41) at (51.496884,56.544395) {};
		\node [circle,fill,scale=0.25] (40) at (55.318279,48.891386) {};
		\node [circle,fill,scale=0.25] (39) at (59.323593,33.391234) {};
		\node [circle,fill,scale=0.25] (38) at (48.114847,27.618269) {};
		\node [circle,fill,scale=0.25] (37) at (42.106877,21.967917) {};
		\node [circle,fill,scale=0.25] (36) at (32.604477,26.187799) {};
		\node [circle,fill,scale=0.25] (35) at (17.094105,37.376111) {};
		\node [circle,fill,scale=0.25] (34) at (17.809341,56.432001) {};
		\node [circle,fill,scale=0.25] (33) at (32.880352,65.454172) {};
		\node [circle,fill,scale=0.25] (32) at (41.606213,68.744251) {};
		\node [circle,fill,scale=0.25] (31) at (55.308062,71.308878) {};
		\node [circle,fill,scale=0.25] (30) at (57.014407,61.918871) {};
		\node [circle,fill,scale=0.25] (29) at (65.770920,45.008685) {};
		\node [circle,fill,scale=0.25] (28) at (67.569224,34.852356) {};
		\node [circle,fill,scale=0.25] (27) at (57.821600,25.983447) {};
		\node [circle,fill,scale=0.25] (26) at (45.631961,15.081231) {};
		\node [circle,fill,scale=0.25] (25) at (24.604068,25.084296) {};
		\node [circle,fill,scale=0.25] (24) at (8.950650,48.268110) {};
		\node [circle,fill,scale=0.25] (23) at (21.477472,67.497700) {};
		\node [circle,fill,scale=0.25] (22) at (35.097579,77.327065) {};
		\node [circle,fill,scale=0.25] (21) at (43.516911,79.922344) {};
		\node [circle,fill,scale=0.25] (20) at (63.727394,72.514559) {};
		\node [circle,fill,scale=0.25] (19) at (72.933484,63.073464) {};
		\node [circle,fill,scale=0.25] (18) at (71.666496,52.559517) {};
		\node [circle,fill,scale=0.25] (17) at (75.947685,29.549402) {};
		\node [circle,fill,scale=0.25] (16) at (60.028610,17.288241) {};
		\node [circle,fill,scale=0.25] (15) at (30.172679,13.528150) {};
		\node [circle,fill,scale=0.25] (14) at (0.000000,49.994891) {};
		\node [circle,fill,scale=0.25] (13) at (23.306428,79.043628) {};
		\node [circle,fill,scale=0.25] (12) at (43.813222,87.677530) {};
		\node [circle,fill,scale=0.25] (11) at (60.447532,82.783282) {};
		\node [circle,fill,scale=0.25] (10) at (80.412792,65.321344) {};
		\node [circle,fill,scale=0.25] (9) at (81.485644,42.413405) {};
		\node [circle,fill,scale=0.25] (8) at (71.247574,15.377542) {};
		\node [circle,fill,scale=0.25] (7) at (25.002556,6.702770) {};
		\node [circle,fill,scale=0.25] (6) at (25.002556,93.297229) {};
		\node [circle,fill,scale=0.25] (5) at (73.188923,83.059160) {};
		\node [circle,fill,scale=0.25] (4) at (90.180852,49.586185) {};
		\node [circle,fill,scale=0.25] (3) at (74.997445,6.702770) {};
		\node [circle,fill,scale=0.25] (2) at (74.997445,93.297229) {};
		\node [circle,fill,scale=0.25] (1) at (99.999999,49.994891) {};
		\draw [black] (50) to (47);
		\draw [black] (50) to (44);
		\draw [black] (50) to (48);
		\draw [black] (49) to (48);
		\draw [black] (49) to (43);
		\draw [black] (49) to (40);
		\draw [black] (48) to (39);
		\draw [black] (47) to (38);
		\draw [black] (47) to (46);
		\draw [black] (46) to (45);
		\draw [black] (46) to (36);
		\draw [black] (45) to (35);
		\draw [black] (45) to (44);
		\draw [black] (44) to (34);
		\draw [black] (43) to (34);
		\draw [black] (43) to (42);
		\draw [black] (42) to (33);
		\draw [black] (42) to (41);
		\draw [black] (41) to (40);
		\draw [black] (41) to (30);
		\draw [black] (40) to (29);
		\draw [black] (39) to (27);
		\draw [black] (39) to (28);
		\draw [black] (38) to (27);
		\draw [black] (38) to (37);
		\draw [black] (37) to (26);
		\draw [black] (37) to (36);
		\draw [black] (36) to (25);
		\draw [black] (35) to (24);
		\draw [black] (35) to (25);
		\draw [black] (34) to (23);
		\draw [black] (34) to (24);
		\draw [black] (33) to (23);
		\draw [black] (33) to (32);
		\draw [black] (32) to (22);
		\draw [black] (32) to (30);
		\draw [black] (31) to (20);
		\draw [black] (31) to (30);
		\draw [black] (31) to (21);
		\draw [black] (30) to (18);
		\draw [black] (29) to (28);
		\draw [black] (29) to (18);
		\draw [black] (28) to (17);
		\draw [black] (27) to (16);
		\draw [black] (26) to (15);
		\draw [black] (26) to (16);
		\draw [black] (25) to (15);
		\draw [black] (24) to (14);
		\draw [black] (23) to (13);
		\draw [black] (22) to (21);
		\draw [black] (22) to (13);
		\draw [black] (21) to (12);
		\draw [black] (20) to (19);
		\draw [black] (20) to (11);
		\draw [black] (19) to (10);
		\draw [black] (19) to (18);
		\draw [black] (18) to (9);
		\draw [black] (17) to (8);
		\draw [black] (17) to (9);
		\draw [black] (16) to (8);
		\draw [black] (15) to (7);
		\draw [black] (14) to (6);
		\draw [black] (14) to (7);
		\draw [black] (13) to (6);
		\draw [black] (12) to (11);
		\draw [black] (12) to (6);
		\draw [black] (11) to (5);
		\draw [black] (10) to (4);
		\draw [black] (10) to (5);
		\draw [black] (9) to (4);
		\draw [black] (8) to (3);
		\draw [black] (7) to (3);
		\draw [black] (6) to (2);
		\draw [black] (5) to (2);
		\draw [black] (4) to (1);
		\draw [black] (3) to (1);
		\draw [black] (2) to (1);
		\begin{pgfonlayer}{bg}
			\path[fill=darkgray] (5.center) -- (10.center) -- (19.center) -- 
			(20.center) -- (11.center);
		\end{pgfonlayer}
		\path[draw=black, very thick] (3) circle[radius=2];
		\path[draw=black, very thick] (8) circle[radius=2];
		\path[draw=black, very thick] (14) circle[radius=2];
		\path[draw=black, very thick] (39) circle[radius=2];
		\path[draw=black, very thick] (48) circle[radius=2];
	\end{tikzpicture}
\end{minipage}\hfill
\begin{minipage}{36mm}
	\begin{tikzpicture}[scale=0.034]
    \definecolor{marked}{rgb}{0.25,0.5,0.25}
    \node [circle,fill,scale=0.25] (52) at (38.308632,49.071934) {};
    \node [circle,fill,scale=0.25] (51) at (46.838770,57.048704) {};
    \node [circle,fill,scale=0.25] (50) at (42.075296,39.010368) {};
    \node [circle,fill,scale=0.25] (49) at (32.717041,49.326587) {};
    \node [circle,fill,scale=0.25] (48) at (40.880959,59.224579) {};
    \node [circle,fill,scale=0.25] (47) at (52.006942,61.992198) {};
    \node [circle,fill,scale=0.25] (46) at (50.485076,48.276636) {};
    \node [circle,fill,scale=0.25] (45) at (48.616901,40.386796) {};
    \node [circle,fill,scale=0.25] (44) at (39.784644,29.537793) {};
    \node [circle,fill,scale=0.25] (43) at (29.371875,40.285709) {};
    \node [circle,fill,scale=0.25] (42) at (21.811524,44.018757) {};
    \node [circle,fill,scale=0.25] (41) at (21.516960,54.728020) {};
    \node [circle,fill,scale=0.25] (40) at (28.828115,58.544523) {};
    \node [circle,fill,scale=0.25] (39) at (38.476756,69.711106) {};
    \node [circle,fill,scale=0.25] (38) at (52.393446,70.120364) {};
    \node [circle,fill,scale=0.25] (37) at (59.657253,54.787101) {};
    \node [circle,fill,scale=0.25] (36) at (58.679839,47.998928) {};
    \node [circle,fill,scale=0.25] (35) at (55.620490,36.037370) {};
    \node [circle,fill,scale=0.25] (34) at (55.525663,29.137399) {};
    \node [circle,fill,scale=0.25] (33) at (46.029418,23.815029) {};
    \node [circle,fill,scale=0.25] (32) at (32.301909,29.945348) {};
    \node [circle,fill,scale=0.25] (31) at (17.367916,37.365777) {};
    \node [circle,fill,scale=0.25] (30) at (16.937763,61.448556) {};
    \node [circle,fill,scale=0.25] (29) at (31.333726,69.155890) {};
    \node [circle,fill,scale=0.25] (28) at (44.730302,75.818371) {};
    \node [circle,fill,scale=0.25] (27) at (61.572465,66.709118) {};
    \node [circle,fill,scale=0.25] (26) at (66.617622,54.595479) {};
    \node [circle,fill,scale=0.25] (25) at (64.281692,41.550647) {};
    \node [circle,fill,scale=0.25] (24) at (65.937031,29.681171) {};
    \node [circle,fill,scale=0.25] (23) at (46.525209,16.152365) {};
    \node [circle,fill,scale=0.25] (22) at (26.006089,25.138729) {};
    \node [circle,fill,scale=0.25] (21) at (8.725079,49.299199) {};
    \node [circle,fill,scale=0.25] (20) at (25.124735,73.949719) {};
    \node [circle,fill,scale=0.25] (19) at (45.516276,83.483839) {};
    \node [circle,fill,scale=0.25] (18) at (65.863244,70.345696) {};
    \node [circle,fill,scale=0.25] (17) at (75.207470,58.590860) {};
    \node [circle,fill,scale=0.25] (16) at (74.583801,42.748095) {};
    \node [circle,fill,scale=0.25] (15) at (61.864907,18.563000) {};
    \node [circle,fill,scale=0.25] (14) at (30.919694,13.834937) {};
    \node [circle,fill,scale=0.25] (13) at (0.000000,49.127333) {};
    \node [circle,fill,scale=0.25] (12) at (29.747768,85.459113) {};
    \node [circle,fill,scale=0.25] (11) at (60.518727,81.671322) {};
    \node [circle,fill,scale=0.25] (10) at (80.102070,65.193001) {};
    \node [circle,fill,scale=0.25] (9) at (80.264372,35.714097) {};
    \node [circle,fill,scale=0.25] (8) at (71.768394,16.250516) {};
    \node [circle,fill,scale=0.25] (7) at (25.748541,6.259267) {};
    \node [circle,fill,scale=0.25] (6) at (24.236782,92.867894) {};
    \node [circle,fill,scale=0.25] (5) at (70.596343,84.518490) {};
    \node [circle,fill,scale=0.25] (4) at (90.014078,50.620256) {};
    \node [circle,fill,scale=0.25] (3) at (75.753431,7.132105) {};
    \node [circle,fill,scale=0.25] (2) at (74.241672,93.740732) {};
    \node [circle,fill,scale=0.25] (1) at (99.999999,50.872839) {};
    \draw [black] (52) to (48);
    \draw [black] (52) to (50);
    \draw [black] (52) to (49);
    \draw [black] (51) to (47);
    \draw [black] (51) to (46);
    \draw [black] (51) to (48);
    \draw [black] (50) to (44);
    \draw [black] (50) to (45);
    \draw [black] (49) to (43);
    \draw [black] (49) to (40);
    \draw [black] (48) to (39);
    \draw [black] (47) to (38);
    \draw [black] (47) to (37);
    \draw [black] (46) to (36);
    \draw [black] (46) to (45);
    \draw [black] (45) to (35);
    \draw [black] (44) to (32);
    \draw [black] (44) to (33);
    \draw [black] (43) to (42);
    \draw [black] (43) to (32);
    \draw [black] (42) to (31);
    \draw [black] (42) to (41);
    \draw [black] (41) to (30);
    \draw [black] (41) to (40);
    \draw [black] (40) to (29);
    \draw [black] (39) to (28);
    \draw [black] (39) to (29);
    \draw [black] (38) to (27);
    \draw [black] (38) to (28);
    \draw [black] (37) to (36);
    \draw [black] (37) to (26);
    \draw [black] (36) to (25);
    \draw [black] (35) to (34);
    \draw [black] (35) to (25);
    \draw [black] (34) to (24);
    \draw [black] (34) to (33);
    \draw [black] (33) to (23);
    \draw [black] (32) to (22);
    \draw [black] (31) to (21);
    \draw [black] (31) to (22);
    \draw [black] (30) to (20);
    \draw [black] (30) to (21);
    \draw [black] (29) to (20);
    \draw [black] (28) to (19);
    \draw [black] (27) to (18);
    \draw [black] (27) to (26);
    \draw [black] (26) to (16);
    \draw [black] (25) to (16);
    \draw [black] (24) to (15);
    \draw [black] (24) to (16);
    \draw [black] (23) to (14);
    \draw [black] (23) to (15);
    \draw [black] (22) to (14);
    \draw [black] (21) to (13);
    \draw [black] (20) to (12);
    \draw [black] (19) to (11);
    \draw [black] (19) to (12);
    \draw [black] (18) to (17);
    \draw [black] (18) to (11);
    \draw [black] (17) to (10);
    \draw [black] (17) to (16);
    \draw [black] (16) to (9);
    \draw [black] (15) to (8);
    \draw [black] (14) to (7);
    \draw [black] (13) to (6);
    \draw [black] (13) to (7);
    \draw [black] (12) to (6);
    \draw [black] (11) to (5);
    \draw [black] (10) to (4);
    \draw [black] (10) to (5);
    \draw [black] (9) to (8);
    \draw [black] (9) to (4);
    \draw [black] (8) to (3);
    \draw [black] (7) to (3);
    \draw [black] (6) to (2);
    \draw [black] (5) to (2);
    \draw [black] (4) to (1);
    \draw [black] (3) to (1);
    \draw [black] (2) to (1);
\begin{pgfonlayer}{bg}
\path[fill=darkgray] (5.center) -- (10.center) -- (17.center) -- (18.center) -- 
(11.center);
\end{pgfonlayer}
\end{tikzpicture}
\end{minipage}\hfill
\begin{minipage}{36mm}
	\begin{tikzpicture}[scale=0.034]
		\definecolor{marked}{rgb}{0.25,0.5,0.25}
		\node [circle,fill,scale=0.25] (53) at (44.493319,93.184638) {};
		\node [circle,fill,scale=0.25] (52) at (39.384647,86.558118) {};
		\node [circle,fill,scale=0.25] (51) at (31.094948,83.097855) {};
		\node [circle,fill,scale=0.25] (50) at (22.755693,84.109674) {};
		\node [circle,fill,scale=0.25] (49) at (28.930459,99.636726) {};
		\node [circle,fill,scale=0.25] (48) at (70.130444,88.083616) {};
		\node [circle,fill,scale=0.25] (47) at (55.115556,85.746962) {};
		\node [circle,fill,scale=0.25] (46) at (44.855992,77.858383) {};
		\node [circle,fill,scale=0.25] (45) at (41.422030,70.070603) {};
		\node [circle,fill,scale=0.25] (44) at (33.458860,73.090793) {};
		\node [circle,fill,scale=0.25] (43) at (20.636641,71.322144) {};
		\node [circle,fill,scale=0.25] (42) at (8.495548,62.049863) {};
		\node [circle,fill,scale=0.25] (41) at (9.644218,31.530242) {};
		\node [circle,fill,scale=0.25] (40) at (0.002854,70.204265) {};
		\node [circle,fill,scale=0.25] (39) at (70.197203,99.999999) {};
		\node [circle,fill,scale=0.25] (38) at (91.164728,66.426166) {};
		\node [circle,fill,scale=0.25] (37) at (78.965010,74.106907) {};
		\node [circle,fill,scale=0.25] (36) at (63.675964,81.622988) {};
		\node [circle,fill,scale=0.25] (35) at (52.706387,75.351964) {};
		\node [circle,fill,scale=0.25] (34) at (45.539094,60.316275) {};
		\node [circle,fill,scale=0.25] (33) at (29.753491,65.749447) {};
		\node [circle,fill,scale=0.25] (32) at (17.613968,62.234641) {};
		\node [circle,fill,scale=0.25] (31) at (12.441720,45.834343) {};
		\node [circle,fill,scale=0.25] (30) at (23.706966,21.527889) {};
		\node [circle,fill,scale=0.25] (29) at (0.360419,28.923397) {};
		\node [circle,fill,scale=0.25] (28) at (99.643788,71.066687) {};
		\node [circle,fill,scale=0.25] (27) at (88.645726,49.233167) {};
		\node [circle,fill,scale=0.25] (26) at (80.377390,48.672076) {};
		\node [circle,fill,scale=0.25] (25) at (75.728544,61.277995) {};
		\node [circle,fill,scale=0.25] (24) at (64.013198,64.261690) {};
		\node [circle,fill,scale=0.25] (23) at (53.131293,62.883726) {};
		\node [circle,fill,scale=0.25] (22) at (38.392990,56.697892) {};
		\node [circle,fill,scale=0.25] (21) at (29.713027,49.994653) {};
		\node [circle,fill,scale=0.25] (20) at (23.804395,39.436415) {};
		\node [circle,fill,scale=0.25] (19) at (29.730423,27.267321) {};
		\node [circle,fill,scale=0.25] (18) at (34.438473,8.485093) {};
		\node [circle,fill,scale=0.25] (17) at (29.802796,0.000000) {};
		\node [circle,fill,scale=0.25] (16) at (99.997146,29.795734) {};
		\node [circle,fill,scale=0.25] (15) at (90.754367,32.495078) {};
		\node [circle,fill,scale=0.25] (14) at (74.762514,35.547339) {};
		\node [circle,fill,scale=0.25] (13) at (68.261065,55.522353) {};
		\node [circle,fill,scale=0.25] (12) at (57.403538,55.465230) {};
		\node [circle,fill,scale=0.25] (11) at (40.713583,48.474372) {};
		\node [circle,fill,scale=0.25] (10) at (33.129008,40.761791) {};
		\node [circle,fill,scale=0.25] (9) at (43.156725,22.154880) {};
		\node [circle,fill,scale=0.25] (8) at (49.632730,12.383889) {};
		\node [circle,fill,scale=0.25] (7) at (71.069539,0.363275) {};
		\node [circle,fill,scale=0.25] (6) at (77.335217,24.166300) {};
		\node [circle,fill,scale=0.25] (5) at (66.502555,38.219234) {};
		\node [circle,fill,scale=0.25] (4) at (53.346958,41.866971) {};
		\node [circle,fill,scale=0.25] (3) at (46.866510,29.954433) {};
		\node [circle,fill,scale=0.25] (2) at (66.408372,11.454558) {};
		\node [circle,fill,scale=0.25] (1) at (58.762003,29.165055) {};
		\draw [black] (53) to (47);
		\draw [black] (53) to (52);
		\draw [black] (53) to (49);
		\draw [black] (52) to (46);
		\draw [black] (52) to (51);
		\draw [black] (51) to (50);
		\draw [black] (51) to (44);
		\draw [black] (50) to (43);
		\draw [black] (50) to (49);
		\draw [black] (49) to (39);
		\draw [black] (49) to (40);
		\draw [black] (48) to (36);
		\draw [black] (48) to (39);
		\draw [black] (48) to (37);
		\draw [black] (47) to (35);
		\draw [black] (47) to (36);
		\draw [black] (46) to (35);
		\draw [black] (46) to (45);
		\draw [black] (45) to (34);
		\draw [black] (45) to (44);
		\draw [black] (44) to (33);
		\draw [black] (43) to (32);
		\draw [black] (43) to (33);
		\draw [black] (42) to (31);
		\draw [black] (42) to (40);
		\draw [black] (42) to (32);
		\draw [black] (41) to (29);
		\draw [black] (41) to (31);
		\draw [black] (41) to (30);
		\draw [black] (40) to (29);
		\draw [black] (39) to (28);
		\draw [black] (38) to (27);
		\draw [black] (38) to (37);
		\draw [black] (38) to (28);
		\draw [black] (37) to (25);
		\draw [black] (36) to (24);
		\draw [black] (35) to (23);
		\draw [black] (34) to (22);
		\draw [black] (34) to (23);
		\draw [black] (33) to (22);
		\draw [black] (32) to (21);
		\draw [black] (31) to (20);
		\draw [black] (30) to (18);
		\draw [black] (30) to (19);
		\draw [black] (29) to (17);
		\draw [black] (28) to (16);
		\draw [black] (27) to (26);
		\draw [black] (27) to (15);
		\draw [black] (26) to (14);
		\draw [black] (26) to (25);
		\draw [black] (25) to (13);
		\draw [black] (24) to (12);
		\draw [black] (24) to (13);
		\draw [black] (23) to (12);
		\draw [black] (22) to (11);
		\draw [black] (21) to (10);
		\draw [black] (21) to (11);
		\draw [black] (20) to (10);
		\draw [black] (20) to (19);
		\draw [black] (19) to (9);
		\draw [black] (18) to (17);
		\draw [black] (18) to (8);
		\draw [black] (17) to (7);
		\draw [black] (16) to (15);
		\draw [black] (16) to (7);
		\draw [black] (15) to (6);
		\draw [black] (14) to (5);
		\draw [black] (14) to (6);
		\draw [black] (13) to (5);
		\draw [black] (12) to (4);
		\draw [black] (11) to (4);
		\draw [black] (10) to (3);
		\draw [black] (9) to (8);
		\draw [black] (9) to (3);
		\draw [black] (8) to (2);
		\draw [black] (7) to (2);
		\draw [black] (6) to (2);
		\draw [black] (5) to (1);
		\draw [black] (4) to (1);
		\draw [black] (3) to (1);
		\draw [black] (2) to (1);
		\begin{pgfonlayer}{bg}
			\path[fill=darkgray] (6.center) -- (14.center) -- (26.center) -- 
			(27.center) -- (15.center);
		\end{pgfonlayer}
		\path[draw=black, very thick] (4) circle[radius=2];
		\path[draw=black, very thick] (21) circle[radius=2];
		\path[draw=black, very thick] (24) circle[radius=2];
		\path[draw=black, very thick] (32) circle[radius=2];
		\path[draw=black, very thick] (36) circle[radius=2];
	\end{tikzpicture}
\end{minipage}
	\caption{Planar $K_2$-hypohamiltonian graphs on $50$, $52$, and $53$
		vertices. A face for which the boundary is an extendable $5$-cycle is filled in. All vertices for which the vertex-deleted subgraphs are non-hamiltonian are circled. Proofs of these facts are given in~\cite{GRWZ22}.}
	\label{fig:planar_graphs} 
\end{figure}
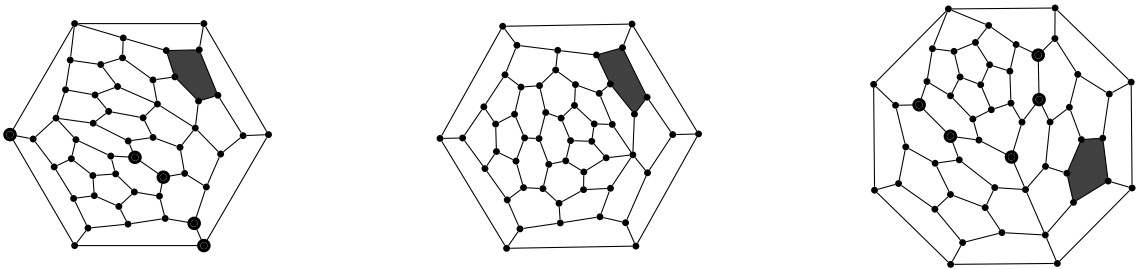
 \begin{theorem}\label{thm:134Planar}
 	There exists a planar $K_2$-hypohamiltonian graph of order $n$ if $n\geq 134$.
 \end{theorem}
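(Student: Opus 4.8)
The plan is to use the amalgam as an engine that both produces planar $K_2$-hypohamiltonian graphs and can be iterated, feeding the output of one amalgamation into the next. First I would bundle the preceding lemmas into one statement: if $(G_i,a_i,a_i',b_i,b_i')$, $i=1,2$, satisfy the hypotheses of Lemmas~\ref{lem:nonHamiltonian}, \ref{lem:K2Hamiltonian} and~\ref{lem:planar}, then the amalgam $G$ of $G_1$ and $G_2$ is a planar $K_2$-hypohamiltonian graph on $\lvert V(G_1)\rvert+\lvert V(G_2)\rvert-2$ vertices --- non-hamiltonicity from Lemma~\ref{lem:nonHamiltonian}, $K_2$-hamiltonicity from Lemma~\ref{lem:K2Hamiltonian}, hence $K_2$-hypohamiltonicity, and planarity from Lemma~\ref{lem:planar}. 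If moreover $G_1$ carries an extendable $5$-cycle disjoint from $\{a_1,a_1',b_1,b_1'\}$ and their neighbours, then Corollary~\ref{cor:extCycle} endows $G$ with an extendable $5$-cycle, and Lemma~\ref{lem:preserveEdges} shows that the hamiltonian-cycle data attached to any gluing tuple of $G_2$ lying entirely in the region $R$ untouched by the surgery is transported unchanged into $G$. So $G$ is again in position to play the role of ``$G_1$'' (with its inherited extendable $5$-cycle) or of ``$G_2$'' in a further amalgam.

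Next I would pin down explicit seeds. The three planar $K_2$-hypohamiltonian graphs of orders $50$, $52$ and $53$ in Figure~\ref{fig:planar_graphs} have, as recorded there and in~\cite{GRWZ22}, the shaded face bounded by an extendable $5$-cycle; a short finite check then exhibits in each of them a gluing tuple disjoint from that $5$-cycle and its neighbours, together with the embedding demanded by Lemma~\ref{lem:planar} (the relevant four vertices co-facial and in the prescribed cyclic order for the ``$G_1$'' role, or not co-facial for the ``$G_2$'' role --- note that, being $3$-connected and planar, each of these graphs has an essentially unique embedding, so the requirement must be read off that one picture). I would also recall the $48$-vertex planar $K_2$-hypohamiltonian graph from~\cite{GRWZ22}, as well as the fact, again from~\cite{GRWZ22}, that planar $K_2$-hypohamiltonian graphs exist for every $n\ge 177$, so that only the window below $177$ really needs to be filled.

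Finally, I would iterate. Each amalgamation with a gadget of order $m$ raises the order by $m-2$, so starting from the seeds and repeatedly amalgamating --- always choosing the fresh gluing tuple inside the most recently attached gadget, in its untouched region, away from the fixed extendable $5$-cycle --- realizes a union of arithmetic progressions of orders; a direct computation with the available gadget orders, supplemented by finitely many ad hoc amalgams of seed graphs to catch the exceptional small values and by the family of~\cite{GRWZ22} for $n\ge 177$, shows this union contains every integer $n\ge 134$. The hard part is not any single hamiltonicity or planarity verification --- those are already isolated in the lemmas --- but maintaining, throughout the iteration, a gluing tuple that simultaneously (i)~satisfies the gluing property, (ii)~lies in the region $R$ untouched by the last surgery so that Lemma~\ref{lem:preserveEdges} applies, (iii)~is disjoint from the fixed inherited extendable $5$-cycle and its neighbours so that Corollary~\ref{cor:extCycle} keeps applying, and (iv)~appears co-facially in the correct cyclic order in the (unique) plane embedding. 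Engineering the gadget(s) so that all four conditions hold at once, and then carrying out the numerical bookkeeping down to $134$, is where essentially all of the work lies.
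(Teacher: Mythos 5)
There is a genuine gap, and it is precisely in the ``numerical bookkeeping'' that you defer to the end. Each amalgamation with one of the available gadgets $G_{50}$, $G_{52}$, $G_{53}$ increases the order by $48$, $50$ or $51$, so iterated amalgamation starting from these seeds only realises orders of the form $2+48a+50b+51c$; these are $50,52,53$, then $98$ and $100$--$104$, then $146$ and $148$--$155$, then $194$--$206$, and so on. In particular the orders $134$--$145$, $147$ and the entire window $156$--$176$ are unreachable, and since the family you import from~\cite{GRWZ22} only starts at $177$, no amount of ``finitely many ad hoc amalgams of seed graphs'' from the stated seed set can close these gaps. The arithmetic progression you hope to generate simply is not dense enough with increments of size $\approx 50$.

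The paper closes the gaps with two ingredients absent from your proposal. First, the orders $134$--$149$ are not obtained by the amalgam of this paper at all, but by gluing five planar $K_2$-hypohamiltonian graphs from Figure~6 of~\cite{GRWZ22} via Lemma~4 of that paper; the amalgam is used only to produce orders $150$--$153$ (via $G_{100}$, the amalgam of $G_{50}$ and $G_{52}$, amalgamated once more with $G_{52}$ or $G_{53}$). Second, and crucially, once twenty planar $K_2$-hypohamiltonian graphs with extendable $5$-cycles exist on the \emph{consecutive} orders $134$--$153$, the \emph{dodecahedron operation} of~\cite{Za21} --- which adds $20$ vertices while preserving planarity, $K_2$-hypohamiltonicity and the extendable $5$-cycle --- is iterated to cover every $n\geq 134$. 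This also explains what the extendable $5$-cycle is actually \emph{for}: it is the input to the dodecahedron operation, not (as your proposal suggests) something that lets the graph ``play the role of $G_1$'' in a further amalgam; that role is governed by the gluing tuple alone. Without an order-preserving small-step operation of this kind, or gadgets of much smaller order, your iteration scheme cannot establish the theorem.
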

 \begin{proof}
 	In~\cite{GRWZ22} it was shown that this holds for $n\geq 177$. Starting from the five planar $K_2$-hypohamiltonian graphs of Figure~$6$ in that paper and gluing them together in the appropriate way using Lemma~$4$ from that paper yields planar $K_2$-hypohamiltonian graphs containing an extendable $5$-cycle on orders 134--149.
 	 We will show the existence of planar $K_2$-hypohamiltonian graphs containing an extendable $5$-cycle on orders 150--153.
 	  Denote by $G_{50}$, $G_{52}$ and $G_{53}$, respectively, the graphs of Figure~\ref{fig:planar_graphs}. In Appendix~\ref{app:planarThm}, we show that these three planar $K_2$-hypohamiltonian graphs have a tuple $(G_i, a_i, a'_i, b_i, b'_i)$ for $i=50,52,53$, that satisfies the gluing property and in which $G_i - a'_i$ has at least two hamiltonian cycles, one containing the edge $b_ib'_i$ and one which does not and in which $G_i - v$ is hamiltonian for all $v\in N_{G_i}[b_i]$. Moreover, $(G_{52}, c_{52}, c'_{52}, d_{52}, d'_{52})$ is a tuple satisfying the same properties, such that $a_{52}, a'_{52}, b_{52}, b'_{52}$ and $c_{52}, c'_{52}, d_{52}, d'_{52}$ are pairwise disjoint. We see that there is an embedding such that $a_i, a'_i, b_i, b'_i$ are co-facial for $i=50,52$ and $c_{52}, c'_{52}, d_{52}, d'_{52}$ and $a_{53}, a'_{53}, b_{53}, b'_{53}$ are not co-facial. 
 	
 	The amalgam $G_{100}$ of $(G_{50}, a_{50}, a'_{50}, b_{50}, b'_{50})$ and $(G_{52},c_{52}, c'_{52}, d_{52}, d'_{52})$ is by Lemma~\ref{lem:nonHamiltonian} and Lemma~\ref{lem:K2Hamiltonian} a $K_2$-hypohamiltonian graph. By Lemma~\ref{lem:planar} it is planar and by Corollary~\ref{cor:extCycle} it contains an extendable $5$-cycle. Moreover, by Lemma~\ref{lem:preserveEdges} $(G_{100}, a_{52}, a'_{52}, b_{52}, b'_{52})$ satisfies all the conditions of Lemma~\ref{lem:K2Hamiltonian}. It is easy to see that this extendable $5$-cycle and $a_{52}, a'_{52}, b_{52}, b'_{52}$ are disjoint and that the latter vertices are co-facial. Hence, the amalgam of $(G_{100}, a_{52}, a'_{52}, b_{52}, b'_{52})$ and  $(G_{52},c_{52}, c'_{52}, d_{52}, d'_{52})$ is then a planar $K_2$-hypohamiltonian graph of order $150$ containing an extendable $5$-cycle by the same reasons as before.
 	
Similarly, we get a planar $K_2$-hypohamiltonian graph of order $151$ with an extendable $5$-cycle by taking the amalgam of $(G_{100}, a_{52}, a'_{52}, b_{52}, b'_{52})$ and $(G_{53},a_{53}, a'_{53}, b_{53}, b'_{53})$.
 
 We get planar $K_2$-hypohamiltonian graphs of order $152$ and $153$ containing an extendable $5$-cycle by taking the amalgam of the appropriate tuples of $G_{52}$ with itself and then taking the amalgam with either $G_{52}$ or $G_{53}$. Applying, ad infinitum, the dodecahedron operation described in~\cite{Za21} to these twenty planar $K_2$-hypohamiltonian graphs, each with an extendable $5$-cycle and having consecutive orders, yields the result. 
 \end{proof}
 
 \subsection{\texorpdfstring{\boldmath{$K_2$}}{K2}-hypohamiltonian graphs with large maximum degree}\label{sec:infinite_family}
 
Thomassen showed in~\cite{Th81} that the maximum degree and maximum number of edges in an $n$-vertex hypohamiltonian graph can be $n/2 - 9$ and $(n-20)^2/4+32$. A similar result is not known for $K_2$-hypohamiltonian graphs, since no non-trivial upper bound on the maximum degree and size of such a graph is known. The third author has shown in~\cite{Za21} that for any integer $d\geq 3$ there exists a $K_2$-hypohamiltonian graph with maximum degree $d$. But in that construction the order of the graph becomes very large  as $d$ increases. In this section, we show the existence of an infinite family consisting of $n$-vertex $K_2$-hypohamiltonian graphs with maximum degree $(n-1)/3$ and maximum size $2n-5$. This family contains both the smallest (the Petersen graph) and second smallest \mbox{($K_2$-)hypohamiltonian} graphs and thus can be seen as yet another generalisation of Petersen's famous graph.

   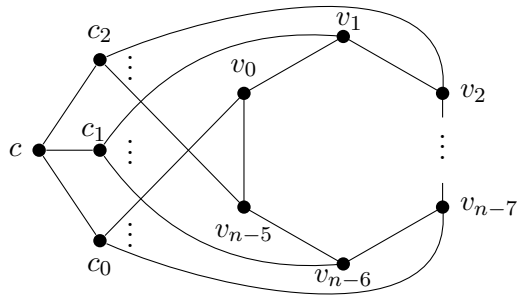
\begin{figure}[!htb]
 \centering
 	\begin{tikzpicture}[vertex/.style = {circle,fill=black,minimum size=5pt,inner sep=0pt}, scale=0.8]
 		\node[vertex, label=left:$c$] (c) at (0,0) {};
 		\node[vertex, label=below:$c_0$] (c0) at (1,-1.5) {};
 		\path (c0) ++(0.5,0.23) node {$\vdots$};
 		\node[vertex] (c1) at (1,0) {};
 		\path(c1) ++(-0.1,0.3) node {$c_1$};
 		\path (c1) ++(0.5,0.1) node {$\vdots$};
 		\node[vertex, label=$c_2$] (c2) at (1,1.5) {};
 		\path (c2) ++(0.5,0) node {$\vdots$};
 		\draw (c) -- (c0);
 		\draw (c) -- (c1);
 		\draw (c) -- (c2);
 		
 		\node[rotate=30,draw=none,minimum size=3cm,regular polygon,regular polygon sides=6] (a) at (5, 0) {};

 		\node[vertex] at (a.corner 1) {};
 		\path (a.corner 1) ++(0.1,0.25) node {$v_1$};
 		\node[vertex, label=above:$v_0$] at (a.corner 2) {};
 		\node[vertex, label=below:$v_{n-5}$] at (a.corner 3) {};
 		\node[vertex] at (a.corner 4) {};
 		\path (a.corner 4) ++(0,-0.25) node {$v_{n-6}$};
 		\node[vertex, label=right:$v_{n-7}$] at (a.corner 5) {};
 		\node[vertex, label=right:$v_2$] at (a.corner 6) {};
 		\path (a.corner 6) ++(0,-0.75) node {$\vdots$};
 		\draw ($ (a.corner 6) + (0,-0.4) $) -- (a.corner 6) -- (a.corner 1) -- (a.corner 2) -- (a.corner 3) -- (a.corner 4) -- (a.corner 5) -- ($ (a.corner 5) + (0,0.4) $);
 		
 		\draw (c0) to (a.corner 2);
 		\draw (c1) to[bend left] (a.corner 1);
 		\draw (c2) to[bend left, in=90] (a.corner 6);
 		\draw (c0) to[bend right, in=-90] (a.corner 5);
 		\draw (c1) to[bend right] (a.corner 4);
 		\draw (c2) to (a.corner 3);
 	\end{tikzpicture}
 	\vspace*{-2em}
 	\caption{Visualisation of a graph with order $n$ from Theorem~\ref{thm:largeMaxDeg}. A vertex $v_i$ is adjacent to $c_j$ if $i\equiv j\bmod\: 3$.}
 	\label{fig:infiniteFamily}
 \end{figure}
 
 \begin{theorem}\label{thm:largeMaxDeg}
 	For every integer $n \geq 10$ which is congruent to $1\bmod\:3$, there exists a hypohamiltonian $K_2$-hamiltonian graph of order $n$, size $2n - 5$ and maximum degree $(n-1)/3$.
 \end{theorem}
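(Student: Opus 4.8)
The plan is to verify directly that the family pictured in Figure~\ref{fig:infiniteFamily} has all the required properties. Write $n=3m+4$ with $m\ge 2$, and let $G_n$ have vertex set $\{c,c_0,c_1,c_2\}\cup\{v_0,\dots,v_{3m-1}\}$, where $c$ is adjacent to $c_0,c_1,c_2$, the vertices $v_i$ form a cycle $C$ (subscripts mod $3m$), and $v_i$ is adjacent to $c_{i\bmod 3}$; call $i\bmod 3$ the \emph{colour} of $v_i$, so colours increase by $1$ along each edge of $C$. The three numerical claims are immediate: $|V(G_n)|=n$; $E(G_n)$ consists of the three edges $cc_j$, the $3m$ edges of $C$, and the $3m$ spokes $v_ic_{i\bmod 3}$, hence $|E(G_n)|=6m+3=2n-5$; and $c$ and each $v_i$ are cubic while $\deg c_j=m+1=(n-1)/3\ge 3$, so $\Delta(G_n)=(n-1)/3$. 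Since $\rho\colon v_i\mapsto v_{i+1},\ c_j\mapsto c_{j+1},\ c\mapsto c$ is an automorphism, the vertex orbits are $\{c\}$, $\{c_0,c_1,c_2\}$, $\{v_0,\dots,v_{3m-1}\}$ and the edge orbits are $\{cc_j\}$, $\{v_iv_{i+1}\}$, $\{v_ic_{i\bmod 3}\}$, so below it suffices to delete one representative from each orbit.

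To show $G_n$ is \emph{non-hamiltonian}, suppose $H$ is a hamiltonian cycle. As $\deg c=3$, exactly one edge $cc_\ell$ lies off $H$; putting $\{j,k\}=\{0,1,2\}\setminus\{\ell\}$, the vertex $c_\ell$ is then joined on $H$ to exactly two colour-$\ell$ vertices, and each of $c_j,c_k$ to $c$ and to exactly one vertex of its own colour. Hence exactly four spokes lie on $H$, whose cycle-endpoints form a set $S$ of four distinct vertices with colour multiset $\{\ell,\ell,j,k\}$. Every $v_i\notin S$ uses both its $C$-edges and every $v_i\in S$ exactly one; the $C$-edges on $H$ contain no cycle (such a cycle would be a $2$-regular subgraph of the cycle $H$, hence all of $H$, impossible since $C$ has fewer vertices than $H$), so they form exactly two arcs $P,Q$ of $C$, with $S$ as their four endpoints and $V(P)\cup V(Q)=V(C)$. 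Writing $C$ as the concatenation of $P$, an edge, $Q$, an edge, say $P$ running from $p_0$ to $p_1$ and $Q$ from $q_0$ (the $C$-successor of $p_1$) to $q_1$ (the $C$-predecessor of $p_0$), we get $|E(P)|+|E(Q)|=3m-2$ and, with $a=\operatorname{col}(p_0)$, colours $\operatorname{col}(p_1)=a+|E(P)|$, $\operatorname{col}(q_0)=a+|E(P)|+1$, $\operatorname{col}(q_1)=a-1$ modulo $3$. A case check on $|E(P)|\bmod 3$ now finishes: if $|E(P)|\equiv 2$ the colour multiset of $S$ is $\{a,a,a+2,a+2\}$, which omits colour $a+1$, contradicting that $S$ meets all three colours; if $|E(P)|\equiv 0$ (resp.\ $|E(P)|\equiv 1$) the repeated colour is carried by the pair $p_0,p_1$ (resp.\ $q_0,q_1$), so the two spokes at those vertices both run to the hub of that colour and $P$ (resp.\ $Q$) together with these two spokes is a proper subcycle of $H$, which is impossible. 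Hence $G_n$ is non-hamiltonian.

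It remains to produce, valid for all $m\ge 2$, hamiltonian cycles in $G_n-x$ for $x\in\{c,c_0,v_0\}$ and in $G_n-u-v$ for $uv$ ranging over one edge of each orbit. The following cyclic vertex sequences do this; each is routine to check, verifying that every consecutive pair is an edge and that the correct number of vertices occurs:
\[
\begin{aligned}
&G_n-c:\quad v_0\,c_0\,v_3\,v_4\,c_1\,v_1\,v_2\,c_2\,v_5\,v_6\,\cdots\,v_{3m-1}\,v_0,\\
&G_n-c_0:\quad v_1\,c_1\,c\,c_2\,v_2\,v_3\,\cdots\,v_{3m-1}\,v_0\,v_1,\\
&G_n-v_0:\quad c_1\,v_1\,v_2\,c_2\,v_{3m-1}\,v_{3m-2}\,\cdots\,v_3\,c_0\,c\,c_1,\\
&G_n-c-c_0:\quad v_1\,c_1\,v_4\,v_3\,v_2\,c_2\,v_5\,v_6\,\cdots\,v_{3m-1}\,v_0\,v_1,\\
&G_n-v_0-v_1:\quad v_2\,v_3\,c_0\,c\,c_1\,v_4\,v_5\,\cdots\,v_{3m-1}\,c_2\,v_2,\\
&G_n-v_0-c_0:\quad v_1\,v_2\,\cdots\,v_{3m-1}\,c_2\,c\,c_1\,v_1.
\end{aligned}
\]
(When $m=2$, runs such as $v_5\,\cdots\,v_{3m-1}$ are to be read as the single vertex $v_5$.) Thus $G_n-x$ is hamiltonian for every vertex $x$ and $G_n-u-v$ is hamiltonian for every edge $uv$; together with non-hamiltonicity this makes $G_n$ a hypohamiltonian $K_2$-hamiltonian graph of order $n$, size $2n-5$, and maximum degree $(n-1)/3$.

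The main obstacle is the non-hamiltonicity argument, and within it the colour bookkeeping that shows that in two of the three residue cases the repeated colour is forced onto the two ends of a single arc, which then closes up with a hub into a short cycle; once the cyclic patterns above are written down, the remaining verifications are mechanical.
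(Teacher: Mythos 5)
Your proposal is correct and constructs exactly the same family as the paper (reparametrised as $n=3m+4$ instead of $n=3m+10$), verifying the vertex- and edge-deleted subgraphs with essentially the same explicit cycles, made cleaner by your explicit reduction to orbit representatives under the rotation automorphism. The non-hamiltonicity argument is organised a little differently --- you classify the intersection of a putative hamiltonian cycle with the outer cycle into two arcs and run a case analysis on $\lvert E(P)\rvert \bmod 3$, whereas the paper fixes the missing hub edge and tracks the arc between the two spokes at $c_0$ --- but both rest on the same colour-counting obstruction, so this is essentially the paper's proof.
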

 \begin{proof}
 	Consider the vertex set 
	$$V_n := \{c, c_0, c_1, c_2\}\cup \{v_i\}_ {i=0}^{n-5}$$ 
	and the edge set
	$$E_n := \{c_0c, c_1c, c_2c, v_0v_{n-5}\}\cup\{v_iv_{i+1}\}_{i=0}^{n-6}\cup\{v_ic_{i\bmod 3}\}_{i=0}^{n-5}.$$
	We define the family $$\mathcal{F} := \{(V_n,E_n)\mid n=3m+10, m\geq 0\}.$$
	Every graph $G$ in $\mathcal{F}$ is non-hamiltonian. Indeed, suppose that $G$ contains a hamiltonian cycle $\mathfrak{h}$ and let the vertices of $G$ be labelled as above. In the remainder of this proof, we will assume all indices are taken modulo $n-4$. Without loss of generality assume that $\mathfrak{h}$ contains $c_1cc_2$ and $v_ic_0v_j$. Then $i\equiv j\equiv 0 \bmod 3$ and $\mathfrak{h}$ must contain either $v_iv_{i+1}\ldots v_{j-1}$ or $v_{i}v_{i-1}\ldots v_{j+1}$. Indeed, suppose $\mathfrak{h}$ contains $v_iv_{i+1}\ldots v_{j-k}$, but not $v_{j-k}v_{j-k+1}$, for $k > 1$, then $\mathfrak{h}$ can only contain either the vertices $v_{j-k+1}, \ldots, v_{j-1}$ or the vertices $v_{j+1},\ldots, v_{i-1}$. In both cases $\mathfrak{h}$ is not hamiltonian. Similarly, we get a contradiction if $\mathfrak{h}$ contains $v_{i}v_{i-1}\ldots v_{j+k}$, but not $v_{j+k}v_{j+k-1}$. Now suppose $\mathfrak{h}$ does contain $v_iv_{i+1}\ldots v_{j-1}$, then $\mathfrak{h}$ must also contain $v_jv_{j+1}\ldots v_{i-1}$. However, $i-1
		\equiv j-1\equiv 2 \bmod 3$, which means there is no pair of edges $v_{i-1}c_1$, $v_{j-1}c_2$ or $v_{i-1}c_2$, $v_{j-1}c_1$ for which both edges are present in $G$, a contradiction. In the case where $\mathfrak{h}$ contains  $v_{i}v_{i-1}\ldots v_{j+1}$, we have the same situation.
		
		We now show that any vertex-deleted subgraph contains a hamiltonian cycle. Denote the order of $G$ by $n$. Suppose $v = c$, then $G - v$ has hamiltonian cycle $$v_0c_0v_3v_4c_1v_1v_2c_2v_5\ldots v_{n-5}.$$
		Suppose $v = c_2$, then $G - v$ has the hamiltonian cycle 
		$$v_0c_0cc_1v_1\ldots v_{n-5}.$$
		For the vertices $c_0$ and $c_1$, the arguments are analogous. Suppose $v = v_i$ and assume without loss of generality that $i\equiv 0\bmod 3$, then $G - v$ has the hamiltonian cycle 
		$$v_{i-1}c_2v_{i+2}v_{i+1}c_1cc_0v_{i+3}v_{i+4}\ldots v_{i-2}.$$
	 The remaining cases are analogous, hence, $G - v$ is hamiltonian for all $v\in V(G)$.
		
		Similarly, we show $G$ is $K_2$-hamiltonian.
		Suppose $v = c, w = c_2$. Then $G - v - w$ has the hamiltonian cycle 
		$$v_0v_1c_1v_4v_3v_2c_2v_5\ldots v_{n-5}.$$
		Suppose $v = c_1$, $w = v_i$, where $i\equiv 1\bmod 3$. Then $G - v - w$ has the hamiltonian cycle 
		$$v_{i-1}c_0cc_2v_{i+1}v_{i+2}\ldots v_{i-2}.$$ 
		Suppose $v = v_i$, $w = v_{i+1}$, such that $i\equiv 0 \bmod 3$. Then $G - v  - w$ has a hamiltonian cycle 
		$$v_{i-1}c_2v_{i+2}v_{i+3}c_0cc_1v_{i+4}v_{i+5}\ldots v_{i-2}.$$ All other cases are analogous to one of these, hence $G$ is $K_2$-hamiltonian. Hence, $G$ is a hypohamiltonian $K_2$-hamiltonian graph.
		
		Now let $n\geq 10$ and $n\equiv 1\bmod 3$, i.e.\ $n = 3m+10$ for some integer $m\geq 0$. Then $(V_n, E_n)$ has maximum degree $m + 3$. 
 \end{proof}

Experiments indicate that for the same orders there also exists an infinite family of hypohamiltonian $K_2$-hypohamiltonian graphs in which a member of order $n$ has size $2n-4$, thus slightly beating the bound above. However, the proofs of e.g.\ non-hamiltonicity are far more tedious, hence we have decided to omit this.

\section*{Acknowledgements}
Several of the computations for this work were carried out using the supercomputer infrastructure provided by the VSC (Flemish Supercomputer Center), funded by the Research
Foundation - Flanders (FWO) and the Flemish Government. The research of Jan Goedgebeur and Jarne Renders was supported by Internal Funds of KU Leuven. The research of Carol T. Zamfirescu was supported by a Postdoctoral Fellowship of the Research Foundation - Flanders (FWO).

\clearpage
% \nocite{*}
% \bibliographystyle{plainurl}
% \bibliography{references}

\appendix
\section{Appendix}
\subsection{Certificates for the proof of Theorem~\ref{thm:134Planar}}\label{app:planarThm}
For the proof of Theorem~\ref{thm:134Planar}, we require four tuples satisfying the gluing property. Note that for readability, we removed all subscripts from the following figures. 

In Fig.~\ref{fig:g50}, we see that $(G_{50}, a_{50}, a'_{50}, b_{50}, b'_{50})$ satisfies the gluing property, such that $G_{50} - a'_{50}$ has at least two hamiltonian cycles, one containing $b_{50}b'_{50}$ and one not containing it. Moreover, we see that in this embedding $a_{50}, a'_{50}, b_{50}, b'_{50}$ are co-facial. Also note that $G_{50} - v$ is hamiltonian for all $v\in \{a_{50}, a'_{50}, b_{50}, b'_{50}\}$ and that $v$ is not among the vertices of the extendable $5$-cycle and their neighbours.
\begin{figure}[!htb]
	\centering
\begin{minipage}{60mm}
\centering
	\begin{tikzpicture}[scale=0.050]
		\definecolor{marked}{rgb}{0.25,0.5,0.25}
		\node [circle,fill,scale=0.25] (50) at (38.980281,41.636865) {};
		\node [circle,fill,scale=0.25] (49) at (45.744355,47.512005) {};
		\node [circle,fill,scale=0.25] (48) at (48.339635,41.125983) {};
		\node [circle,fill,scale=0.25] (47) at (40.860326,34.668438) {};
		\node [circle,fill,scale=0.25] (46) at (32.052724,34.014509) {};
		\node [circle,fill,scale=0.25] (45) at (23.725352,40.594666) {};
		\node [circle,fill,scale=0.25] (44) at (25.503221,48.022887) {};
		\node [circle,fill,scale=0.25] (43) at (32.165119,54.429344) {};
		\node [circle,fill,scale=0.25] (42) at (38.162870,59.047716) {};
		\node [circle,fill,scale=0.25] (41) at (51.496884,56.544395) {};
		\node [circle,fill,scale=0.25] (40) at (55.318279,48.891386) {};
		\node [circle,fill,scale=0.25] (39) at (59.323593,33.391234) {};
		\node [circle,fill,scale=0.25] (38) at (48.114847,27.618269) {};
		\node [circle,fill,scale=0.25] (37) at (42.106877,21.967917) {};
		\node [circle,fill,scale=0.25] (36) at (32.604477,26.187799) {};
		\node [circle,fill,scale=0.25] (35) at (17.094105,37.376111) {};
		\path (35) ++(0.5,4) node {$b$};
		\node [circle,fill,scale=0.25] (34) at (17.809341,56.432001) {};
		\node [circle,fill,scale=0.25] (33) at (32.880352,65.454172) {};
		\node [circle,fill,scale=0.25] (32) at (41.606213,68.744251) {};
		\node [circle,fill,scale=0.25] (31) at (55.308062,71.308878) {};
		\node [circle,fill,scale=0.25] (30) at (57.014407,61.918871) {};
		\node [circle,fill,scale=0.25] (29) at (65.770920,45.008685) {};
		\node [circle,fill,scale=0.25] (28) at (67.569224,34.852356) {};
		\node [circle,fill,scale=0.25] (27) at (57.821600,25.983447) {};
		\node [circle,fill,scale=0.25] (26) at (45.631961,15.081231) {};
		\node [circle,fill,scale=0.25] (25) at (24.604068,25.084296) {};
		\path (25) ++(1,4) node {$a$};
		\node [circle,fill,scale=0.25] (24) at (8.950650,48.268110) {};
		\path (24) ++(0,4) node {$b'$};
		\node [circle,fill,scale=0.25] (23) at (21.477472,67.497700) {};
		\node [circle,fill,scale=0.25] (22) at (35.097579,77.327065) {};
		\node [circle,fill,scale=0.25] (21) at (43.516911,79.922344) {};
		\node [circle,fill,scale=0.25] (20) at (63.727394,72.514559) {};
		\node [circle,fill,scale=0.25] (19) at (72.933484,63.073464) {};
		\node [circle,fill,scale=0.25] (18) at (71.666496,52.559517) {};
		\node [circle,fill,scale=0.25] (17) at (75.947685,29.549402) {};
		\node [circle,fill,scale=0.25] (16) at (60.028610,17.288241) {};
		\node [circle,fill,scale=0.25] (15) at (30.172679,13.528150) {};
		\path (15) ++(2.5,4) node {$a'$};
		\node [circle,fill,scale=0.25] (14) at (0.000000,49.994891) {};
		\node [circle,fill,scale=0.25] (13) at (23.306428,79.043628) {};
		\node [circle,fill,scale=0.25] (12) at (43.813222,87.677530) {};
		\node [circle,fill,scale=0.25] (11) at (60.447532,82.783282) {};
		\node [circle,fill,scale=0.25] (10) at (80.412792,65.321344) {};
		\node [circle,fill,scale=0.25] (9) at (81.485644,42.413405) {};
		\node [circle,fill,scale=0.25] (8) at (71.247574,15.377542) {};
		\node [circle,fill,scale=0.25] (7) at (25.002556,6.702770) {};
		\node [circle,fill,scale=0.25] (6) at (25.002556,93.297229) {};
		\node [circle,fill,scale=0.25] (5) at (73.188923,83.059160) {};
		\node [circle,fill,scale=0.25] (4) at (90.180852,49.586185) {};
		\node [circle,fill,scale=0.25] (3) at (74.997445,6.702770) {};
		\node [circle,fill,scale=0.25] (2) at (74.997445,93.297229) {};
		\node [circle,fill,scale=0.25] (1) at (99.999999,49.994891) {};
		\draw [black] (50) to (47);
		\draw [black] (50) to (44);
		\draw [black] (50) to (48);
		\draw [black] (49) to (48);
		\draw [black] (49) to (43);
		\draw [black] (49) to (40);
		\draw [black] (48) to (39);
		\draw [black] (47) to (38);
		\draw [black] (47) to (46);
		\draw [black] (46) to (45);
		\draw [black] (46) to (36);
		\draw [black] (45) to (35);
		\draw [black] (45) to (44);
		\draw [black] (44) to (34);
		\draw [black] (43) to (34);
		\draw [black] (43) to (42);
		\draw [black] (42) to (33);
		\draw [black] (42) to (41);
		\draw [black] (41) to (40);
		\draw [black] (41) to (30);
		\draw [black] (40) to (29);
		\draw [black] (39) to (27);
		\draw [black] (39) to (28);
		\draw [black] (38) to (27);
		\draw [black] (38) to (37);
		\draw [black] (37) to (26);
		\draw [black] (37) to (36);
		\draw [black] (36) to (25);
		\draw [black] (35) to (24);
		\draw [black] (35) to (25);
		\draw [black] (34) to (23);
		\draw [black] (34) to (24);
		\draw [black] (33) to (23);
		\draw [black] (33) to (32);
		\draw [black] (32) to (22);
		\draw [black] (32) to (30);
		\draw [black] (31) to (20);
		\draw [black] (31) to (30);
		\draw [black] (31) to (21);
		\draw [black] (30) to (18);
		\draw [black] (29) to (28);
		\draw [black] (29) to (18);
		\draw [black] (28) to (17);
		\draw [black] (27) to (16);
		\draw [black] (26) to (15);
		\draw [black] (26) to (16);
		\draw [black] (25) to (15);
		\draw [black] (24) to (14);
		\draw [black] (23) to (13);
		\draw [black] (22) to (21);
		\draw [black] (22) to (13);
		\draw [black] (21) to (12);
		\draw [black] (20) to (19);
		\draw [black] (20) to (11);
		\draw [black] (19) to (10);
		\draw [black] (19) to (18);
		\draw [black] (18) to (9);
		\draw [black] (17) to (8);
		\draw [black] (17) to (9);
		\draw [black] (16) to (8);
		\draw [black] (15) to (7);
		\draw [black] (14) to (6);
		\draw [black] (14) to (7);
		\draw [black] (13) to (6);
		\draw [black] (12) to (11);
		\draw [black] (12) to (6);
		\draw [black] (11) to (5);
		\draw [black] (10) to (4);
		\draw [black] (10) to (5);
		\draw [black] (9) to (4);
		\draw [black] (8) to (3);
		\draw [black] (7) to (3);
		\draw [black] (6) to (2);
		\draw [black] (5) to (2);
		\draw [black] (4) to (1);
		\draw [black] (3) to (1);
		\draw [black] (2) to (1);
		\draw [red, ultra thick] (7) -- (3) -- (1) -- (2) -- (5) -- (10) -- (4) -- (9) -- (18) -- (19) -- (20) -- (11) -- (12) -- (6) -- (13) -- (23) -- (33) -- (32) -- (22) -- (21) -- (31) -- (30) -- (41) -- (42) -- (43) -- (49) -- (40) -- (29) -- (28) -- (17) -- (8) -- (16) -- (26) -- (37) -- (36) -- (25) -- (35) -- (45) -- (46) -- (47) -- (38) -- (27) -- (39) -- (48) -- (50) -- (44) -- (34) -- (24) -- (14) -- (7);

		\begin{pgfonlayer}{bg}
			\path[fill=darkgray] (5.center) -- (10.center) -- (19.center) -- 
			(20.center) -- (11.center);
		\end{pgfonlayer}
		\path[draw=black, very thick] (3) circle[radius=2];
		\path[draw=black, very thick] (8) circle[radius=2];
		\path[draw=black, very thick] (14) circle[radius=2];
		\path[draw=black, very thick] (39) circle[radius=2];
		\path[draw=black, very thick] (48) circle[radius=2];
	\end{tikzpicture}
\end{minipage}
\begin{minipage}{60mm}
\centering
	\begin{tikzpicture}[scale=0.050]
				\node [circle,fill,scale=0.25] (50) at (38.980281,41.636865) {};
		\node [circle,fill,scale=0.25] (49) at (45.744355,47.512005) {};
		\node [circle,fill,scale=0.25] (48) at (48.339635,41.125983) {};
		\node [circle,fill,scale=0.25] (47) at (40.860326,34.668438) {};
		\node [circle,fill,scale=0.25] (46) at (32.052724,34.014509) {};
		\node [circle,fill,scale=0.25] (45) at (23.725352,40.594666) {};
		\node [circle,fill,scale=0.25] (44) at (25.503221,48.022887) {};
		\node [circle,fill,scale=0.25] (43) at (32.165119,54.429344) {};
		\node [circle,fill,scale=0.25] (42) at (38.162870,59.047716) {};
		\node [circle,fill,scale=0.25] (41) at (51.496884,56.544395) {};
		\node [circle,fill,scale=0.25] (40) at (55.318279,48.891386) {};
		\node [circle,fill,scale=0.25] (39) at (59.323593,33.391234) {};
		\node [circle,fill,scale=0.25] (38) at (48.114847,27.618269) {};
		\node [circle,fill,scale=0.25] (37) at (42.106877,21.967917) {};
		\node [circle,fill,scale=0.25] (36) at (32.604477,26.187799) {};
		\node [circle,fill,scale=0.25] (35) at (17.094105,37.376111) {};
		\path (35) ++(0.5,4) node {$b$};
		\node [circle,fill,scale=0.25] (34) at (17.809341,56.432001) {};
		\node [circle,fill,scale=0.25] (33) at (32.880352,65.454172) {};
		\node [circle,fill,scale=0.25] (32) at (41.606213,68.744251) {};
		\node [circle,fill,scale=0.25] (31) at (55.308062,71.308878) {};
		\node [circle,fill,scale=0.25] (30) at (57.014407,61.918871) {};
		\node [circle,fill,scale=0.25] (29) at (65.770920,45.008685) {};
		\node [circle,fill,scale=0.25] (28) at (67.569224,34.852356) {};
		\node [circle,fill,scale=0.25] (27) at (57.821600,25.983447) {};
		\node [circle,fill,scale=0.25] (26) at (45.631961,15.081231) {};
		\node [circle,fill,scale=0.25] (25) at (24.604068,25.084296) {};
		\path (25) ++(1,4) node {$a$};
		\node [circle,fill,scale=0.25] (24) at (8.950650,48.268110) {};
		\path (24) ++(0,4) node {$b'$};
		\node [circle,fill,scale=0.25] (23) at (21.477472,67.497700) {};
		\node [circle,fill,scale=0.25] (22) at (35.097579,77.327065) {};
		\node [circle,fill,scale=0.25] (21) at (43.516911,79.922344) {};
		\node [circle,fill,scale=0.25] (20) at (63.727394,72.514559) {};
		\node [circle,fill,scale=0.25] (19) at (72.933484,63.073464) {};
		\node [circle,fill,scale=0.25] (18) at (71.666496,52.559517) {};
		\node [circle,fill,scale=0.25] (17) at (75.947685,29.549402) {};
		\node [circle,fill,scale=0.25] (16) at (60.028610,17.288241) {};
		\node [circle,fill,scale=0.25] (15) at (30.172679,13.528150) {};
		\path (15) ++(2.5,4) node {$a'$};
		\node [circle,fill,scale=0.25] (14) at (0.000000,49.994891) {};
		\node [circle,fill,scale=0.25] (13) at (23.306428,79.043628) {};
		\node [circle,fill,scale=0.25] (12) at (43.813222,87.677530) {};
		\node [circle,fill,scale=0.25] (11) at (60.447532,82.783282) {};
		\node [circle,fill,scale=0.25] (10) at (80.412792,65.321344) {};
		\node [circle,fill,scale=0.25] (9) at (81.485644,42.413405) {};
		\node [circle,fill,scale=0.25] (8) at (71.247574,15.377542) {};
		\node [circle,fill,scale=0.25] (7) at (25.002556,6.702770) {};
		\node [circle,fill,scale=0.25] (6) at (25.002556,93.297229) {};
		\node [circle,fill,scale=0.25] (5) at (73.188923,83.059160) {};
		\node [circle,fill,scale=0.25] (4) at (90.180852,49.586185) {};
		\node [circle,fill,scale=0.25] (3) at (74.997445,6.702770) {};
		\node [circle,fill,scale=0.25] (2) at (74.997445,93.297229) {};
		\node [circle,fill,scale=0.25] (1) at (99.999999,49.994891) {};
		\draw [black] (50) to (47);
		\draw [black] (50) to (44);
		\draw [black] (50) to (48);
		\draw [black] (49) to (48);
		\draw [black] (49) to (43);
		\draw [black] (49) to (40);
		\draw [black] (48) to (39);
		\draw [black] (47) to (38);
		\draw [black] (47) to (46);
		\draw [black] (46) to (45);
		\draw [black] (46) to (36);
		\draw [black] (45) to (35);
		\draw [black] (45) to (44);
		\draw [black] (44) to (34);
		\draw [black] (43) to (34);
		\draw [black] (43) to (42);
		\draw [black] (42) to (33);
		\draw [black] (42) to (41);
		\draw [black] (41) to (40);
		\draw [black] (41) to (30);
		\draw [black] (40) to (29);
		\draw [black] (39) to (27);
		\draw [black] (39) to (28);
		\draw [black] (38) to (27);
		\draw [black] (38) to (37);
		\draw [black] (37) to (26);
		\draw [black] (37) to (36);
		\draw [black] (36) to (25);
		\draw [black] (35) to (24);
		\draw [black] (35) to (25);
		\draw [black] (34) to (23);
		\draw [black] (34) to (24);
		\draw [black] (33) to (23);
		\draw [black] (33) to (32);
		\draw [black] (32) to (22);
		\draw [black] (32) to (30);
		\draw [black] (31) to (20);
		\draw [black] (31) to (30);
		\draw [black] (31) to (21);
		\draw [black] (30) to (18);
		\draw [black] (29) to (28);
		\draw [black] (29) to (18);
		\draw [black] (28) to (17);
		\draw [black] (27) to (16);
		\draw [black] (26) to (15);
		\draw [black] (26) to (16);
		\draw [black] (25) to (15);
		\draw [black] (24) to (14);
		\draw [black] (23) to (13);
		\draw [black] (22) to (21);
		\draw [black] (22) to (13);
		\draw [black] (21) to (12);
		\draw [black] (20) to (19);
		\draw [black] (20) to (11);
		\draw [black] (19) to (10);
		\draw [black] (19) to (18);
		\draw [black] (18) to (9);
		\draw [black] (17) to (8);
		\draw [black] (17) to (9);
		\draw [black] (16) to (8);
		\draw [black] (15) to (7);
		\draw [black] (14) to (6);
		\draw [black] (14) to (7);
		\draw [black] (13) to (6);
		\draw [black] (12) to (11);
		\draw [black] (12) to (6);
		\draw [black] (11) to (5);
		\draw [black] (10) to (4);
		\draw [black] (10) to (5);
		\draw [black] (9) to (4);
		\draw [black] (8) to (3);
		\draw [black] (7) to (3);
		\draw [black] (6) to (2);
		\draw [black] (5) to (2);
		\draw [black] (4) to (1);
		\draw [black] (3) to (1);
		\draw [black] (2) to (1);
		\draw [red, ultra thick] (7) -- (3) -- (8) -- (17) -- (9) -- (4) -- (1) -- (2) -- (5) -- (10) -- (19) -- (18) -- (30) -- (41) -- (40) -- (29) -- (28) -- (39) -- (27) -- (16) -- (26) -- (37) -- (38) -- (47) -- (50) -- (48) -- (49) -- (43) -- (42) -- (33) -- (32) -- (22) -- (21) -- (31) -- (20) -- (11) -- (12) -- (6) -- (13) -- (23) -- (34) -- (44) -- (45) -- (46) -- (36) -- (25) -- (35) -- (24) -- (14) -- (7);
		\begin{pgfonlayer}{bg}
			\path[fill=darkgray] (5.center) -- (10.center) -- (19.center) -- 
			(20.center) -- (11.center);
		\end{pgfonlayer}
		\path[draw=black, very thick] (3) circle[radius=2];
		\path[draw=black, very thick] (8) circle[radius=2];
		\path[draw=black, very thick] (14) circle[radius=2];
		\path[draw=black, very thick] (39) circle[radius=2];
		\path[draw=black, very thick] (48) circle[radius=2];
	\end{tikzpicture}
\end{minipage}
	\caption{The planar $K_2$-hypohamiltonian graph $G_{50}$. A face of which the boundary is an extendable $5$-cycle is filled in. All vertices for which the vertex-deleted subgraph is non-hamiltonian are circled. On the left-hand side a hamiltonian cycle of $G_{50} - a'$ not containing $bb'$ and on the right-hand side one which does contain $bb'$ are marked by a thick line.}
	\label{fig:g50}
\end{figure}
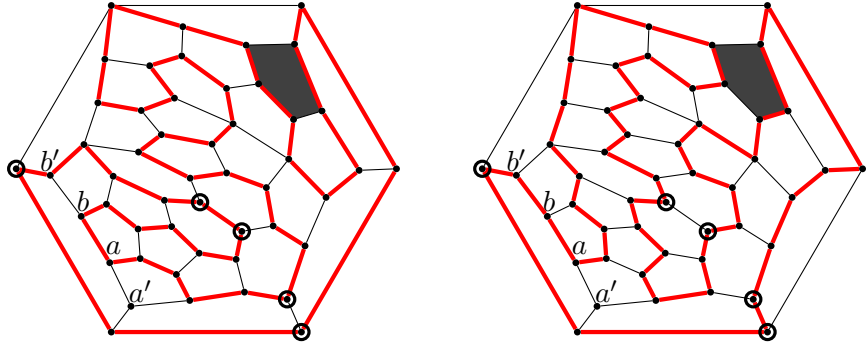

In Fig.~\ref{fig:g52}, we see that $(G_{52}, a_{52}, a'_{52}, b_{52}, b'_{52})$ satisfies the gluing property, such that $G_{52} - a'_{52}$ has at least two hamiltonian cycles, one containing $b_{52}b'_{52}$ and one not containing it. Moreover, we see that in this embedding $a_{52}, a'_{52}, b_{52}, b'_{52}$ are co-facial. Also note that $G_{52} - v$ is hamiltonian for $v\in \{a_{52}, a'_{52}, b_{52}, b'_{52}\}$ and that $v$ is not among the vertices of the extendable $5$-cycle and their neighbours.
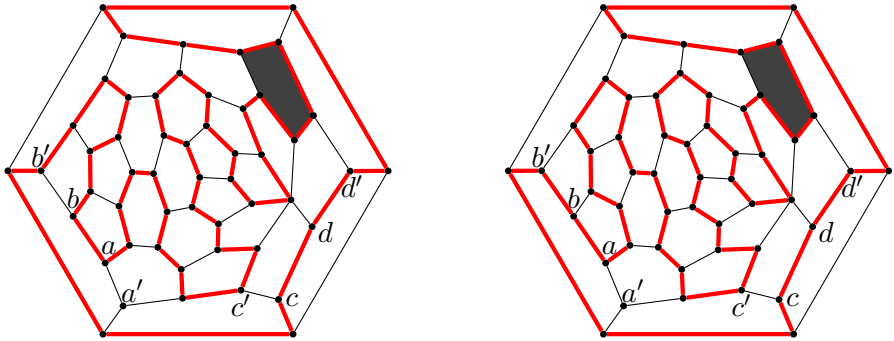
\begin{figure}[!htb]
	\centering
\begin{minipage}{60mm}
	\begin{tikzpicture}[scale=0.050]
    \definecolor{marked}{rgb}{0.25,0.5,0.25}
    \node [circle,fill,scale=0.25] (52) at (38.295997,49.276143) {};
    \node [circle,fill,scale=0.25] (51) at (46.962729,57.101634) {};
    \node [circle,fill,scale=0.25] (50) at (41.885942,39.151914) {};
    \node [circle,fill,scale=0.25] (49) at (32.710553,49.628290) {};
    \node [circle,fill,scale=0.25] (48) at (41.044701,59.380809) {};
    \node [circle,fill,scale=0.25] (47) at (52.215590,61.953439) {};
    \node [circle,fill,scale=0.25] (46) at (50.454855,48.268611) {};
    \node [circle,fill,scale=0.25] (45) at (48.449572,40.413774) {};
    \node [circle,fill,scale=0.25] (44) at (39.430693,29.722195) {};
    \node [circle,fill,scale=0.25] (43) at (29.208645,40.648538) {};
    \node [circle,fill,scale=0.25] (42) at (21.715737,44.512375) {};
    \node [circle,fill,scale=0.25] (41) at (21.608137,55.223516) {};
    \node [circle,fill,scale=0.25] (40) at (28.983662,58.911279) {};
    \node [circle,fill,scale=0.25] (39) at (38.824218,69.906094) {};
    \node [circle,fill,scale=0.25] (38) at (52.743811,70.072386) {};
    \node [circle,fill,scale=0.25] (37) at (59.737844,54.617041) {};
    \node [circle,fill,scale=0.25] (36) at (58.642275,47.847991) {};
    \node [circle,fill,scale=0.25] (35) at (55.375132,35.943461) {};
    \node [circle,fill,scale=0.25] (34) at (55.159932,29.047247) {};
    \node [circle,fill,scale=0.25] (33) at (45.573704,23.892205) {};
    \node [circle,fill,scale=0.25] (32) at (31.957349,30.260199) {};
    \node [circle,fill,scale=0.25] (31) at (17.157389,37.938962) {};
    \node [circle,fill,scale=0.25] (30) at (17.147606,62.021912) {};
    \node [circle,fill,scale=0.25] (29) at (31.673676,69.475692) {};
    \node [circle,fill,scale=0.25] (28) at (45.182430,75.902376) {};
    \node [circle,fill,scale=0.25] (27) at (61.860509,66.502006) {};
    \node [circle,fill,scale=0.25] (26) at (66.692749,54.304021) {};
    \node [circle,fill,scale=0.25] (25) at (64.129901,41.303923) {};
    \node [circle,fill,scale=0.25] (24) at (65.577617,29.409177) {};
    \node [circle,fill,scale=0.25] (23) at (45.935633,16.223224) {};
    \node [circle,fill,scale=0.25] (22) at (25.579573,25.564904) {};
    \node [circle,fill,scale=0.25] (21) at (8.725420,50.019564) {};
    \node [circle,fill,scale=0.25] (20) at (25.550227,74.376406) {};
    \node [circle,fill,scale=0.25] (19) at (46.101925,83.551795) {};
    \node [circle,fill,scale=0.25] (18) at (66.213438,70.062604) {};
    \node [circle,fill,scale=0.25] (17) at (75.349699,58.148294) {};
    \node [circle,fill,scale=0.25] (16) at (74.449767,42.321237) {};
    \node [circle,fill,scale=0.25] (15) at (61.312724,18.365451) {};
    \node [circle,fill,scale=0.25] (14) at (30.294433,14.178813) {};
    \node [circle,fill,scale=0.25] (13) at (0.000000,50.000001) {};
    \node [circle,fill,scale=0.25] (12) at (30.372688,85.801624) {};
    \node [circle,fill,scale=0.25] (11) at (61.068178,81.478041) {};
    \node [circle,fill,scale=0.25] (10) at (80.358014,64.663015) {};
    \node [circle,fill,scale=0.25] (9) at (80.005867,35.190258) {};
    \node [circle,fill,scale=0.25] (8) at (71.172842,15.880858) {};
    \node [circle,fill,scale=0.25] (7) at (24.992662,6.695686) {};
    \node [circle,fill,scale=0.25] (6) at (24.992662,93.304313) {};
    \node [circle,fill,scale=0.25] (5) at (71.192407,84.148490) {};
    \node [circle,fill,scale=0.25] (4) at (90.012712,49.921746) {};
    \node [circle,fill,scale=0.25] (3) at (74.997552,6.695686) {};
    \node [circle,fill,scale=0.25] (2) at (74.997552,93.304313) {};
    \node [circle,fill,scale=0.25] (1) at (99.999999,50.000001) {};
    \path (31) ++(0,5) node {$b$};
    \path (22) ++(1,4) node {$a$};
    \path (21) ++(0,4) node {$b'$};
    \path (15) ++(0,-4) node {$c'$};
    \path (14) ++(2.5,4) node {$a'$};
    \path (9) ++(3.5,-1.5) node {$d$};
    \path (8) ++(3.5,0) node {$c$};
    \path (4) ++(0.5,-4) node {$d'$};

    \draw [black] (52) to (48);
    \draw [black] (52) to (50);
    \draw [black] (52) to (49);
    \draw [black] (51) to (47);
    \draw [black] (51) to (46);
    \draw [black] (51) to (48);
    \draw [black] (50) to (44);
    \draw [black] (50) to (45);
    \draw [black] (49) to (43);
    \draw [black] (49) to (40);
    \draw [black] (48) to (39);
    \draw [black] (47) to (38);
    \draw [black] (47) to (37);
    \draw [black] (46) to (36);
    \draw [black] (46) to (45);
    \draw [black] (45) to (35);
    \draw [black] (44) to (32);
    \draw [black] (44) to (33);
    \draw [black] (43) to (42);
    \draw [black] (43) to (32);
    \draw [black] (42) to (31);
    \draw [black] (42) to (41);
    \draw [black] (41) to (30);
    \draw [black] (41) to (40);
    \draw [black] (40) to (29);
    \draw [black] (39) to (28);
    \draw [black] (39) to (29);
    \draw [black] (38) to (27);
    \draw [black] (38) to (28);
    \draw [black] (37) to (36);
    \draw [black] (37) to (26);
    \draw [black] (36) to (25);
    \draw [black] (35) to (34);
    \draw [black] (35) to (25);
    \draw [black] (34) to (24);
    \draw [black] (34) to (33);
    \draw [black] (33) to (23);
    \draw [black] (32) to (22);
    \draw [black] (31) to (21);
    \draw [black] (31) to (22);
    \draw [black] (30) to (20);
    \draw [black] (30) to (21);
    \draw [black] (29) to (20);
    \draw [black] (28) to (19);
    \draw [black] (27) to (18);
    \draw [black] (27) to (26);
    \draw [black] (26) to (16);
    \draw [black] (25) to (16);
    \draw [black] (24) to (15);
    \draw [black] (24) to (16);
    \draw [black] (23) to (14);
    \draw [black] (23) to (15);
    \draw [black] (22) to (14);
    \draw [black] (21) to (13);
    \draw [black] (20) to (12);
    \draw [black] (19) to (11);
    \draw [black] (19) to (12);
    \draw [black] (18) to (17);
    \draw [black] (18) to (11);
    \draw [black] (17) to (10);
    \draw [black] (17) to (16);
    \draw [black] (16) to (9);
    \draw [black] (15) to (8);
    \draw [black] (14) to (7);
    \draw [black] (13) to (6);
    \draw [black] (13) to (7);
    \draw [black] (12) to (6);
    \draw [black] (11) to (5);
    \draw [black] (10) to (4);
    \draw [black] (10) to (5);
    \draw [black] (9) to (8);
    \draw [black] (9) to (4);
    \draw [black] (8) to (3);
    \draw [black] (7) to (3);
    \draw [black] (6) to (2);
    \draw [black] (5) to (2);
    \draw [black] (4) to (1);
    \draw [black] (3) to (1);
    \draw [black] (2) to (1);
    \draw [red, ultra thick] (7) -- (3) -- (8) -- (9) -- (4) -- (1) -- (2) -- (6) -- (12) -- (19) -- (11) -- (5) -- (10) -- (17) -- (18) -- (27) -- (26) -- (16) -- (25) -- (36) -- (37) -- (47) -- (38) -- (28) -- (39) -- (48) -- (51) -- (46) -- (45) -- (35) -- (34) -- (24) -- (15) -- (23) -- (33) -- (44) -- (50) -- (52) -- (49) -- (43) -- (32) -- (22) -- (31) -- (42) -- (41) -- (40) -- (29) -- (20) -- (30) -- (21) -- (13) -- (7);
\begin{pgfonlayer}{bg}
\path[fill=darkgray] (5.center) -- (10.center) -- (17.center) -- (18.center) -- 
(11.center);
\end{pgfonlayer}
\end{tikzpicture}
\end{minipage}
\begin{minipage}{60mm}
\centering
	\begin{tikzpicture}[scale=0.050]
    \definecolor{marked}{rgb}{0.25,0.5,0.25}
    \node [circle,fill,scale=0.25] (52) at (38.295997,49.276143) {};
    \node [circle,fill,scale=0.25] (51) at (46.962729,57.101634) {};
    \node [circle,fill,scale=0.25] (50) at (41.885942,39.151914) {};
    \node [circle,fill,scale=0.25] (49) at (32.710553,49.628290) {};
    \node [circle,fill,scale=0.25] (48) at (41.044701,59.380809) {};
    \node [circle,fill,scale=0.25] (47) at (52.215590,61.953439) {};
    \node [circle,fill,scale=0.25] (46) at (50.454855,48.268611) {};
    \node [circle,fill,scale=0.25] (45) at (48.449572,40.413774) {};
    \node [circle,fill,scale=0.25] (44) at (39.430693,29.722195) {};
    \node [circle,fill,scale=0.25] (43) at (29.208645,40.648538) {};
    \node [circle,fill,scale=0.25] (42) at (21.715737,44.512375) {};
    \node [circle,fill,scale=0.25] (41) at (21.608137,55.223516) {};
    \node [circle,fill,scale=0.25] (40) at (28.983662,58.911279) {};
    \node [circle,fill,scale=0.25] (39) at (38.824218,69.906094) {};
    \node [circle,fill,scale=0.25] (38) at (52.743811,70.072386) {};
    \node [circle,fill,scale=0.25] (37) at (59.737844,54.617041) {};
    \node [circle,fill,scale=0.25] (36) at (58.642275,47.847991) {};
    \node [circle,fill,scale=0.25] (35) at (55.375132,35.943461) {};
    \node [circle,fill,scale=0.25] (34) at (55.159932,29.047247) {};
    \node [circle,fill,scale=0.25] (33) at (45.573704,23.892205) {};
    \node [circle,fill,scale=0.25] (32) at (31.957349,30.260199) {};
    \node [circle,fill,scale=0.25] (31) at (17.157389,37.938962) {};
    \node [circle,fill,scale=0.25] (30) at (17.147606,62.021912) {};
    \node [circle,fill,scale=0.25] (29) at (31.673676,69.475692) {};
    \node [circle,fill,scale=0.25] (28) at (45.182430,75.902376) {};
    \node [circle,fill,scale=0.25] (27) at (61.860509,66.502006) {};
    \node [circle,fill,scale=0.25] (26) at (66.692749,54.304021) {};
    \node [circle,fill,scale=0.25] (25) at (64.129901,41.303923) {};
    \node [circle,fill,scale=0.25] (24) at (65.577617,29.409177) {};
    \node [circle,fill,scale=0.25] (23) at (45.935633,16.223224) {};
    \node [circle,fill,scale=0.25] (22) at (25.579573,25.564904) {};
    \node [circle,fill,scale=0.25] (21) at (8.725420,50.019564) {};
    \node [circle,fill,scale=0.25] (20) at (25.550227,74.376406) {};
    \node [circle,fill,scale=0.25] (19) at (46.101925,83.551795) {};
    \node [circle,fill,scale=0.25] (18) at (66.213438,70.062604) {};
    \node [circle,fill,scale=0.25] (17) at (75.349699,58.148294) {};
    \node [circle,fill,scale=0.25] (16) at (74.449767,42.321237) {};
    \node [circle,fill,scale=0.25] (15) at (61.312724,18.365451) {};
    \node [circle,fill,scale=0.25] (14) at (30.294433,14.178813) {};
    \node [circle,fill,scale=0.25] (13) at (0.000000,50.000001) {};
    \node [circle,fill,scale=0.25] (12) at (30.372688,85.801624) {};
    \node [circle,fill,scale=0.25] (11) at (61.068178,81.478041) {};
    \node [circle,fill,scale=0.25] (10) at (80.358014,64.663015) {};
    \node [circle,fill,scale=0.25] (9) at (80.005867,35.190258) {};
    \node [circle,fill,scale=0.25] (8) at (71.172842,15.880858) {};
    \node [circle,fill,scale=0.25] (7) at (24.992662,6.695686) {};
    \node [circle,fill,scale=0.25] (6) at (24.992662,93.304313) {};
    \node [circle,fill,scale=0.25] (5) at (71.192407,84.148490) {};
    \node [circle,fill,scale=0.25] (4) at (90.012712,49.921746) {};
    \node [circle,fill,scale=0.25] (3) at (74.997552,6.695686) {};
    \node [circle,fill,scale=0.25] (2) at (74.997552,93.304313) {};
    \node [circle,fill,scale=0.25] (1) at (99.999999,50.000001) {};
    \path (31) ++(0,5) node {$b$};
    \path (22) ++(1,4) node {$a$};
    \path (21) ++(0,4) node {$b'$};
    \path (15) ++(0,-4) node {$c'$};
    \path (14) ++(2.5,4) node {$a'$};
    \path (9) ++(3.5,-1.5) node {$d$};
    \path (8) ++(3.5,0) node {$c$};
    \path (4) ++(0.5,-4) node {$d'$};
    \draw [black] (52) to (48);
    \draw [black] (52) to (50);
    \draw [black] (52) to (49);
    \draw [black] (51) to (47);
    \draw [black] (51) to (46);
    \draw [black] (51) to (48);
    \draw [black] (50) to (44);
    \draw [black] (50) to (45);
    \draw [black] (49) to (43);
    \draw [black] (49) to (40);
    \draw [black] (48) to (39);
    \draw [black] (47) to (38);
    \draw [black] (47) to (37);
    \draw [black] (46) to (36);
    \draw [black] (46) to (45);
    \draw [black] (45) to (35);
    \draw [black] (44) to (32);
    \draw [black] (44) to (33);
    \draw [black] (43) to (42);
    \draw [black] (43) to (32);
    \draw [black] (42) to (31);
    \draw [black] (42) to (41);
    \draw [black] (41) to (30);
    \draw [black] (41) to (40);
    \draw [black] (40) to (29);
    \draw [black] (39) to (28);
    \draw [black] (39) to (29);
    \draw [black] (38) to (27);
    \draw [black] (38) to (28);
    \draw [black] (37) to (36);
    \draw [black] (37) to (26);
    \draw [black] (36) to (25);
    \draw [black] (35) to (34);
    \draw [black] (35) to (25);
    \draw [black] (34) to (24);
    \draw [black] (34) to (33);
    \draw [black] (33) to (23);
    \draw [black] (32) to (22);
    \draw [black] (31) to (21);
    \draw [black] (31) to (22);
    \draw [black] (30) to (20);
    \draw [black] (30) to (21);
    \draw [black] (29) to (20);
    \draw [black] (28) to (19);
    \draw [black] (27) to (18);
    \draw [black] (27) to (26);
    \draw [black] (26) to (16);
    \draw [black] (25) to (16);
    \draw [black] (24) to (15);
    \draw [black] (24) to (16);
    \draw [black] (23) to (14);
    \draw [black] (23) to (15);
    \draw [black] (22) to (14);
    \draw [black] (21) to (13);
    \draw [black] (20) to (12);
    \draw [black] (19) to (11);
    \draw [black] (19) to (12);
    \draw [black] (18) to (17);
    \draw [black] (18) to (11);
    \draw [black] (17) to (10);
    \draw [black] (17) to (16);
    \draw [black] (16) to (9);
    \draw [black] (15) to (8);
    \draw [black] (14) to (7);
    \draw [black] (13) to (6);
    \draw [black] (13) to (7);
    \draw [black] (12) to (6);
    \draw [black] (11) to (5);
    \draw [black] (10) to (4);
    \draw [black] (10) to (5);
    \draw [black] (9) to (8);
    \draw [black] (9) to (4);
    \draw [black] (8) to (3);
    \draw [black] (7) to (3);
    \draw [black] (6) to (2);
    \draw [black] (5) to (2);
    \draw [black] (4) to (1);
    \draw [black] (3) to (1);
    \draw [black] (2) to (1);
    \draw [red, ultra thick] (7) -- (3) -- (8) -- (9) -- (4) -- (1) -- (2) -- (6) -- (12) -- (19) -- (11) -- (5) -- (10) -- (17) -- (18) -- (27) -- (26) -- (16) -- (25) -- (36) -- (37) -- (47) -- (38) -- (28) -- (39) -- (48) -- (51) -- (46) -- (45) -- (35) -- (34) -- (24) -- (15) -- (23) -- (33) -- (44) -- (50) -- (52) -- (49) -- (40) -- (29) -- (20) -- (30) -- (41) -- (42) -- (43) -- (32) -- (22) -- (31) -- (21) -- (13) -- (7);
\begin{pgfonlayer}{bg}
\path[fill=darkgray] (5.center) -- (10.center) -- (17.center) -- (18.center) -- 
(11.center);
\end{pgfonlayer}
\end{tikzpicture}
\end{minipage}

	\caption{The planar $K_2$-hypohamiltonian graph $G_{52}$. A face of which the boundary is an extendable $5$-cycle is filled in. All of its vertex-deleted subgraphs are hamiltonian. On the left-hand side a hamiltonian cycle of $G_{52} - a'$ not containing $bb'$ and on the right-hand side one which does contain $bb'$ are marked by a thick line.}
	\label{fig:g52}
\end{figure}

In Fig.~\ref{fig:g52v2}, we see that $(G_{52}, c_{52}, c'_{52}, d_{52}, d'_{52})$ satisfies the gluing property, such that $G_{52} - c'_{52}$ has at least two hamiltonian cycles, one containing $d_{52}d'_{52}$ and one not containing it. Moreover, we see that in this embedding $c_{52}, c'_{52}, d_{52}, d'_{52}$ are not co-facial. Also note that $G_{52} - v$ is hamiltonian for $v\in \{c_{52}, c'_{52}, d_{52}, d'_{52}\}$. However, $v$ is not disjoint from the vertices of the extendable $5$-cycle and their neighbours, but this is of no concern for the proof.
\begin{figure}[!htb]
	\centering
	\begin{minipage}{60mm}
\centering
	\begin{tikzpicture}[scale=0.050]
    \definecolor{marked}{rgb}{0.25,0.5,0.25}
    \node [circle,fill,scale=0.25] (52) at (38.295997,49.276143) {};
    \node [circle,fill,scale=0.25] (51) at (46.962729,57.101634) {};
    \node [circle,fill,scale=0.25] (50) at (41.885942,39.151914) {};
    \node [circle,fill,scale=0.25] (49) at (32.710553,49.628290) {};
    \node [circle,fill,scale=0.25] (48) at (41.044701,59.380809) {};
    \node [circle,fill,scale=0.25] (47) at (52.215590,61.953439) {};
    \node [circle,fill,scale=0.25] (46) at (50.454855,48.268611) {};
    \node [circle,fill,scale=0.25] (45) at (48.449572,40.413774) {};
    \node [circle,fill,scale=0.25] (44) at (39.430693,29.722195) {};
    \node [circle,fill,scale=0.25] (43) at (29.208645,40.648538) {};
    \node [circle,fill,scale=0.25] (42) at (21.715737,44.512375) {};
    \node [circle,fill,scale=0.25] (41) at (21.608137,55.223516) {};
    \node [circle,fill,scale=0.25] (40) at (28.983662,58.911279) {};
    \node [circle,fill,scale=0.25] (39) at (38.824218,69.906094) {};
    \node [circle,fill,scale=0.25] (38) at (52.743811,70.072386) {};
    \node [circle,fill,scale=0.25] (37) at (59.737844,54.617041) {};
    \node [circle,fill,scale=0.25] (36) at (58.642275,47.847991) {};
    \node [circle,fill,scale=0.25] (35) at (55.375132,35.943461) {};
    \node [circle,fill,scale=0.25] (34) at (55.159932,29.047247) {};
    \node [circle,fill,scale=0.25] (33) at (45.573704,23.892205) {};
    \node [circle,fill,scale=0.25] (32) at (31.957349,30.260199) {};
    \node [circle,fill,scale=0.25] (31) at (17.157389,37.938962) {};
    \node [circle,fill,scale=0.25] (30) at (17.147606,62.021912) {};
    \node [circle,fill,scale=0.25] (29) at (31.673676,69.475692) {};
    \node [circle,fill,scale=0.25] (28) at (45.182430,75.902376) {};
    \node [circle,fill,scale=0.25] (27) at (61.860509,66.502006) {};
    \node [circle,fill,scale=0.25] (26) at (66.692749,54.304021) {};
    \node [circle,fill,scale=0.25] (25) at (64.129901,41.303923) {};
    \node [circle,fill,scale=0.25] (24) at (65.577617,29.409177) {};
    \node [circle,fill,scale=0.25] (23) at (45.935633,16.223224) {};
    \node [circle,fill,scale=0.25] (22) at (25.579573,25.564904) {};
    \node [circle,fill,scale=0.25] (21) at (8.725420,50.019564) {};
    \node [circle,fill,scale=0.25] (20) at (25.550227,74.376406) {};
    \node [circle,fill,scale=0.25] (19) at (46.101925,83.551795) {};
    \node [circle,fill,scale=0.25] (18) at (66.213438,70.062604) {};
    \node [circle,fill,scale=0.25] (17) at (75.349699,58.148294) {};
    \node [circle,fill,scale=0.25] (16) at (74.449767,42.321237) {};
    \node [circle,fill,scale=0.25] (15) at (61.312724,18.365451) {};
    \node [circle,fill,scale=0.25] (14) at (30.294433,14.178813) {};
    \node [circle,fill,scale=0.25] (13) at (0.000000,50.000001) {};
    \node [circle,fill,scale=0.25] (12) at (30.372688,85.801624) {};
    \node [circle,fill,scale=0.25] (11) at (61.068178,81.478041) {};
    \node [circle,fill,scale=0.25] (10) at (80.358014,64.663015) {};
    \node [circle,fill,scale=0.25] (9) at (80.005867,35.190258) {};
    \node [circle,fill,scale=0.25] (8) at (71.172842,15.880858) {};
    \node [circle,fill,scale=0.25] (7) at (24.992662,6.695686) {};
    \node [circle,fill,scale=0.25] (6) at (24.992662,93.304313) {};
    \node [circle,fill,scale=0.25] (5) at (71.192407,84.148490) {};
    \node [circle,fill,scale=0.25] (4) at (90.012712,49.921746) {};
    \node [circle,fill,scale=0.25] (3) at (74.997552,6.695686) {};
    \node [circle,fill,scale=0.25] (2) at (74.997552,93.304313) {};
    \node [circle,fill,scale=0.25] (1) at (99.999999,50.000001) {};
    \path (31) ++(0,5) node {$b$};
    \path (22) ++(1,4) node {$a$};
    \path (21) ++(0,4) node {$b'$};
    \path (15) ++(0,-4) node {$c'$};
    \path (14) ++(2.5,4) node {$a'$};
    \path (9) ++(3.5,-1.5) node {$d$};
    \path (8) ++(3.5,0) node {$c$};
    \path (4) ++(0.5,-4) node {$d'$};
    \draw [black] (52) to (48);
    \draw [black] (52) to (50);
    \draw [black] (52) to (49);
    \draw [black] (51) to (47);
    \draw [black] (51) to (46);
    \draw [black] (51) to (48);
    \draw [black] (50) to (44);
    \draw [black] (50) to (45);
    \draw [black] (49) to (43);
    \draw [black] (49) to (40);
    \draw [black] (48) to (39);
    \draw [black] (47) to (38);
    \draw [black] (47) to (37);
    \draw [black] (46) to (36);
    \draw [black] (46) to (45);
    \draw [black] (45) to (35);
    \draw [black] (44) to (32);
    \draw [black] (44) to (33);
    \draw [black] (43) to (42);
    \draw [black] (43) to (32);
    \draw [black] (42) to (31);
    \draw [black] (42) to (41);
    \draw [black] (41) to (30);
    \draw [black] (41) to (40);
    \draw [black] (40) to (29);
    \draw [black] (39) to (28);
    \draw [black] (39) to (29);
    \draw [black] (38) to (27);
    \draw [black] (38) to (28);
    \draw [black] (37) to (36);
    \draw [black] (37) to (26);
    \draw [black] (36) to (25);
    \draw [black] (35) to (34);
    \draw [black] (35) to (25);
    \draw [black] (34) to (24);
    \draw [black] (34) to (33);
    \draw [black] (33) to (23);
    \draw [black] (32) to (22);
    \draw [black] (31) to (21);
    \draw [black] (31) to (22);
    \draw [black] (30) to (20);
    \draw [black] (30) to (21);
    \draw [black] (29) to (20);
    \draw [black] (28) to (19);
    \draw [black] (27) to (18);
    \draw [black] (27) to (26);
    \draw [black] (26) to (16);
    \draw [black] (25) to (16);
    \draw [black] (24) to (15);
    \draw [black] (24) to (16);
    \draw [black] (23) to (14);
    \draw [black] (23) to (15);
    \draw [black] (22) to (14);
    \draw [black] (21) to (13);
    \draw [black] (20) to (12);
    \draw [black] (19) to (11);
    \draw [black] (19) to (12);
    \draw [black] (18) to (17);
    \draw [black] (18) to (11);
    \draw [black] (17) to (10);
    \draw [black] (17) to (16);
    \draw [black] (16) to (9);
    \draw [black] (15) to (8);
    \draw [black] (14) to (7);
    \draw [black] (13) to (6);
    \draw [black] (13) to (7);
    \draw [black] (12) to (6);
    \draw [black] (11) to (5);
    \draw [black] (10) to (4);
    \draw [black] (10) to (5);
    \draw [black] (9) to (8);
    \draw [black] (9) to (4);
    \draw [black] (8) to (3);
    \draw [black] (7) to (3);
    \draw [black] (6) to (2);
    \draw [black] (5) to (2);
    \draw [black] (4) to (1);
    \draw [black] (3) to (1);
    \draw [black] (2) to (1);
    \draw [red, ultra thick] (8) -- (3) -- (7) -- (14) -- (23) -- (33) -- (44) -- (50) -- (45) -- (46) -- (51) -- (47) -- (38) -- (28) -- (39) -- (48) -- (52) -- (49) -- (43) -- (32) -- (22) -- (31) -- (42) -- (41) -- (40) -- (29) -- (20) -- (30) -- (21) -- (13) -- (6) -- (12) -- (19) -- (11) -- (5) -- (2) -- (1) -- (4) -- (10) -- (17) -- (18) -- (27) -- (26) -- (37) -- (36) -- (25) -- (35) -- (34) -- (24) -- (16) -- (9) -- (8);
\begin{pgfonlayer}{bg}
\path[fill=darkgray] (5.center) -- (10.center) -- (17.center) -- (18.center) -- 
(11.center);
\end{pgfonlayer}
\end{tikzpicture}
\end{minipage}
\begin{minipage}{60mm}
\centering
	\begin{tikzpicture}[scale=0.050]
    \definecolor{marked}{rgb}{0.25,0.5,0.25}
    \node [circle,fill,scale=0.25] (52) at (38.295997,49.276143) {};
    \node [circle,fill,scale=0.25] (51) at (46.962729,57.101634) {};
    \node [circle,fill,scale=0.25] (50) at (41.885942,39.151914) {};
    \node [circle,fill,scale=0.25] (49) at (32.710553,49.628290) {};
    \node [circle,fill,scale=0.25] (48) at (41.044701,59.380809) {};
    \node [circle,fill,scale=0.25] (47) at (52.215590,61.953439) {};
    \node [circle,fill,scale=0.25] (46) at (50.454855,48.268611) {};
    \node [circle,fill,scale=0.25] (45) at (48.449572,40.413774) {};
    \node [circle,fill,scale=0.25] (44) at (39.430693,29.722195) {};
    \node [circle,fill,scale=0.25] (43) at (29.208645,40.648538) {};
    \node [circle,fill,scale=0.25] (42) at (21.715737,44.512375) {};
    \node [circle,fill,scale=0.25] (41) at (21.608137,55.223516) {};
    \node [circle,fill,scale=0.25] (40) at (28.983662,58.911279) {};
    \node [circle,fill,scale=0.25] (39) at (38.824218,69.906094) {};
    \node [circle,fill,scale=0.25] (38) at (52.743811,70.072386) {};
    \node [circle,fill,scale=0.25] (37) at (59.737844,54.617041) {};
    \node [circle,fill,scale=0.25] (36) at (58.642275,47.847991) {};
    \node [circle,fill,scale=0.25] (35) at (55.375132,35.943461) {};
    \node [circle,fill,scale=0.25] (34) at (55.159932,29.047247) {};
    \node [circle,fill,scale=0.25] (33) at (45.573704,23.892205) {};
    \node [circle,fill,scale=0.25] (32) at (31.957349,30.260199) {};
    \node [circle,fill,scale=0.25] (31) at (17.157389,37.938962) {};
    \node [circle,fill,scale=0.25] (30) at (17.147606,62.021912) {};
    \node [circle,fill,scale=0.25] (29) at (31.673676,69.475692) {};
    \node [circle,fill,scale=0.25] (28) at (45.182430,75.902376) {};
    \node [circle,fill,scale=0.25] (27) at (61.860509,66.502006) {};
    \node [circle,fill,scale=0.25] (26) at (66.692749,54.304021) {};
    \node [circle,fill,scale=0.25] (25) at (64.129901,41.303923) {};
    \node [circle,fill,scale=0.25] (24) at (65.577617,29.409177) {};
    \node [circle,fill,scale=0.25] (23) at (45.935633,16.223224) {};
    \node [circle,fill,scale=0.25] (22) at (25.579573,25.564904) {};
    \node [circle,fill,scale=0.25] (21) at (8.725420,50.019564) {};
    \node [circle,fill,scale=0.25] (20) at (25.550227,74.376406) {};
    \node [circle,fill,scale=0.25] (19) at (46.101925,83.551795) {};
    \node [circle,fill,scale=0.25] (18) at (66.213438,70.062604) {};
    \node [circle,fill,scale=0.25] (17) at (75.349699,58.148294) {};
    \node [circle,fill,scale=0.25] (16) at (74.449767,42.321237) {};
    \node [circle,fill,scale=0.25] (15) at (61.312724,18.365451) {};
    \node [circle,fill,scale=0.25] (14) at (30.294433,14.178813) {};
    \node [circle,fill,scale=0.25] (13) at (0.000000,50.000001) {};
    \node [circle,fill,scale=0.25] (12) at (30.372688,85.801624) {};
    \node [circle,fill,scale=0.25] (11) at (61.068178,81.478041) {};
    \node [circle,fill,scale=0.25] (10) at (80.358014,64.663015) {};
    \node [circle,fill,scale=0.25] (9) at (80.005867,35.190258) {};
    \node [circle,fill,scale=0.25] (8) at (71.172842,15.880858) {};
    \node [circle,fill,scale=0.25] (7) at (24.992662,6.695686) {};
    \node [circle,fill,scale=0.25] (6) at (24.992662,93.304313) {};
    \node [circle,fill,scale=0.25] (5) at (71.192407,84.148490) {};
    \node [circle,fill,scale=0.25] (4) at (90.012712,49.921746) {};
    \node [circle,fill,scale=0.25] (3) at (74.997552,6.695686) {};
    \node [circle,fill,scale=0.25] (2) at (74.997552,93.304313) {};
    \node [circle,fill,scale=0.25] (1) at (99.999999,50.000001) {};
    \path (31) ++(0,5) node {$b$};
    \path (22) ++(1,4) node {$a$};
    \path (21) ++(0,4) node {$b'$};
    \path (15) ++(0,-4) node {$c'$};
    \path (14) ++(2.5,4) node {$a'$};
    \path (9) ++(3.5,-1.5) node {$d$};
    \path (8) ++(3.5,0) node {$c$};
    \path (4) ++(0.5,-4) node {$d'$};
    \draw [black] (52) to (48);
    \draw [black] (52) to (50);
    \draw [black] (52) to (49);
    \draw [black] (51) to (47);
    \draw [black] (51) to (46);
    \draw [black] (51) to (48);
    \draw [black] (50) to (44);
    \draw [black] (50) to (45);
    \draw [black] (49) to (43);
    \draw [black] (49) to (40);
    \draw [black] (48) to (39);
    \draw [black] (47) to (38);
    \draw [black] (47) to (37);
    \draw [black] (46) to (36);
    \draw [black] (46) to (45);
    \draw [black] (45) to (35);
    \draw [black] (44) to (32);
    \draw [black] (44) to (33);
    \draw [black] (43) to (42);
    \draw [black] (43) to (32);
    \draw [black] (42) to (31);
    \draw [black] (42) to (41);
    \draw [black] (41) to (30);
    \draw [black] (41) to (40);
    \draw [black] (40) to (29);
    \draw [black] (39) to (28);
    \draw [black] (39) to (29);
    \draw [black] (38) to (27);
    \draw [black] (38) to (28);
    \draw [black] (37) to (36);
    \draw [black] (37) to (26);
    \draw [black] (36) to (25);
    \draw [black] (35) to (34);
    \draw [black] (35) to (25);
    \draw [black] (34) to (24);
    \draw [black] (34) to (33);
    \draw [black] (33) to (23);
    \draw [black] (32) to (22);
    \draw [black] (31) to (21);
    \draw [black] (31) to (22);
    \draw [black] (30) to (20);
    \draw [black] (30) to (21);
    \draw [black] (29) to (20);
    \draw [black] (28) to (19);
    \draw [black] (27) to (18);
    \draw [black] (27) to (26);
    \draw [black] (26) to (16);
    \draw [black] (25) to (16);
    \draw [black] (24) to (15);
    \draw [black] (24) to (16);
    \draw [black] (23) to (14);
    \draw [black] (23) to (15);
    \draw [black] (22) to (14);
    \draw [black] (21) to (13);
    \draw [black] (20) to (12);
    \draw [black] (19) to (11);
    \draw [black] (19) to (12);
    \draw [black] (18) to (17);
    \draw [black] (18) to (11);
    \draw [black] (17) to (10);
    \draw [black] (17) to (16);
    \draw [black] (16) to (9);
    \draw [black] (15) to (8);
    \draw [black] (14) to (7);
    \draw [black] (13) to (6);
    \draw [black] (13) to (7);
    \draw [black] (12) to (6);
    \draw [black] (11) to (5);
    \draw [black] (10) to (4);
    \draw [black] (10) to (5);
    \draw [black] (9) to (8);
    \draw [black] (9) to (4);
    \draw [black] (8) to (3);
    \draw [black] (7) to (3);
    \draw [black] (6) to (2);
    \draw [black] (5) to (2);
    \draw [black] (4) to (1);
    \draw [black] (3) to (1);
    \draw [black] (2) to (1);
    \draw [red, ultra thick] (8) -- (3) -- (7) -- (14) -- (23) -- (33) -- (44) -- (50) -- (52) -- (48) -- (39) -- (28) -- (38) -- (47) -- (51) -- (46) -- (45) -- (35) -- (34) -- (24) -- (16) -- (25) -- (36) -- (37) -- (26) -- (27) -- (18) -- (17) -- (10) -- (5) -- (11) -- (19) -- (12) -- (20) -- (29) -- (40) -- (49) -- (43) -- (32) -- (22) -- (31) -- (42) -- (41) -- (30) -- (21) -- (13) -- (6) -- (2) -- (1) -- (4) -- (9) -- (8);
\begin{pgfonlayer}{bg}
\path[fill=darkgray] (5.center) -- (10.center) -- (17.center) -- (18.center) -- 
(11.center);
\end{pgfonlayer}
\end{tikzpicture}
\end{minipage}
	\caption{The planar $K_2$-hypohamiltonian graph $G_{52}$. A face of which the boundary is an extendable $5$-cycle is filled in. All vertices for which the vertex-deleted subgraph is non-hamiltonian are encircled. On the left-hand side a hamiltonian cycle of $G_{52} - c'$ not containing $dd'$ and on the right-hand side one which does contain $dd'$ are marked by a thick line.}
	\label{fig:g52v2}
\end{figure}
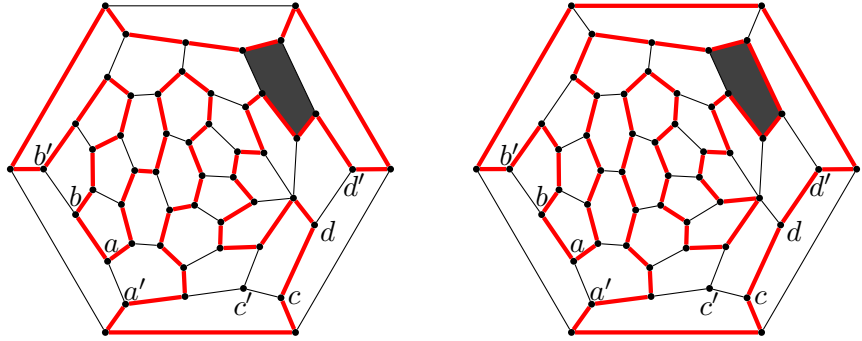

In Fig.~\ref{fig:g53}, we see that $(G_{53}, a_{53}, a'_{53}, b_{53}, b'_{53})$ satisfies the gluing property, such that $G_{53} - a'_{53}$ has at least two hamiltonian cycles, one containing $b_{53}b'_{53}$ and one not containing it. Moreover, we see that in this embedding $a_{53}, a'_{53}, b_{53}, b'_{53}$ are not co-facial. Also note that $G_{53} - v$ is hamiltonian for $v\in \{a_{53}, a'_{53}, b_{53}, b'_{53}\}$ and that $v$ is not among the vertices of the extendable $5$-cycle and their neighbours.

\begin{figure}[!htb]
	\centering
	\begin{minipage}{60mm}
\centering
	\begin{tikzpicture}[scale=0.050]
		\definecolor{marked}{rgb}{0.25,0.5,0.25}
		\node [circle,fill,scale=0.25] (53) at (60.944676,88.850605) {};
    	\node [circle,fill,scale=0.25] (52) at (54.184141,85.045931) {};
	    \node [circle,fill,scale=0.25] (51) at (45.855801,85.095862) {};
    	\node [circle,fill,scale=0.25] (50) at (39.105252,88.980423) {};
    	\node [circle,fill,scale=0.25] (49) at (49.999999,99.995004) {};
    	\node [circle,fill,scale=0.25] (48) at (80.976631,75.209704) {};
    	\node [circle,fill,scale=0.25] (47) at (67.315756,78.654880) {};
    	\node [circle,fill,scale=0.25] (46) at (55.702016,75.639102) {};
    	\node [circle,fill,scale=0.25] (45) at (49.950069,70.236666) {};
    	\node [circle,fill,scale=0.25] (44) at (44.248052,75.699019) {};
    	\node [circle,fill,scale=0.25] (43) at (32.664270,78.834628) {};
    	\node [circle,fill,scale=0.25] (42) at (18.943479,75.319549) {};
    	\node [circle,fill,scale=0.25] (41) at (8.867585,48.856599) {};
    	\node [circle,fill,scale=0.25] (40) at (14.649492,85.355499) {};
    	\node [circle,fill,scale=0.25] (39) at (85.350507,85.355499) {};
    	\node [circle,fill,scale=0.25] (38) at (91.082483,49.106250) {};
    	\node [circle,fill,scale=0.25] (37) at (83.453162,60.080885) {};
    	\node [circle,fill,scale=0.25] (36) at (73.127619,72.034150) {};
    	\node [circle,fill,scale=0.25] (35) at (61.493908,70.656079) {};
    	\node [circle,fill,scale=0.25] (34) at (49.930097,60.420409) {};
    	\node [circle,fill,scale=0.25] (33) at (38.426203,70.775911) {};
    	\node [circle,fill,scale=0.25] (32) at (26.792490,72.173954) {};
    	\node [circle,fill,scale=0.25] (31) at (16.436989,60.050927) {};
    	\node [circle,fill,scale=0.25] (30) at (17.245857,35.225684) {};
    	\node [circle,fill,scale=0.25] (29) at (0.000000,49.995005) {};
    	\node [circle,fill,scale=0.25] (28) at (99.999999,49.995005) {};
    	\node [circle,fill,scale=0.25] (27) at (82.704213,35.345516) {};
    	\node [circle,fill,scale=0.25] (26) at (75.444377,37.861992) {};
    	\node [circle,fill,scale=0.25] (25) at (76.043538,50.304572) {};
    	\node [circle,fill,scale=0.25] (24) at (67.126023,57.095065) {};
    	\node [circle,fill,scale=0.25] (23) at (57.339723,59.861192) {};
    	\node [circle,fill,scale=0.25] (22) at (42.520471,59.921108) {};
    	\node [circle,fill,scale=0.25] (21) at (32.684243,57.344715) {};
    	\node [circle,fill,scale=0.25] (20) at (23.816658,50.474334) {};
    	\node [circle,fill,scale=0.25] (19) at (24.465750,37.941880) {};
    	\node [circle,fill,scale=0.25] (18) at (21.679650,20.206712) {};
    	\node [circle,fill,scale=0.25] (17) at (14.649492,14.644497) {};
   		\node [circle,fill,scale=0.25] (16) at (85.350507,14.644497) {};
    	\node [circle,fill,scale=0.25] (15) at (78.440182,20.296584) {};
    	\node [circle,fill,scale=0.25] (14) at (65.897742,28.694827) {};
    	\node [circle,fill,scale=0.25] (13) at (67.585379,48.097662) {};
    	\node [circle,fill,scale=0.25] (12) at (58.298381,51.982223) {};
    	\node [circle,fill,scale=0.25] (11) at (41.521869,52.062111) {};
    	\node [circle,fill,scale=0.25] (10) at (32.254843,48.227480) {};
    	\node [circle,fill,scale=0.25] (9) at (34.072299,28.714800) {};
    	\node [circle,fill,scale=0.25] (8) at (36.059517,18.029758) {};
    	\node [circle,fill,scale=0.25] (7) at (49.999999,0.004995) {};
    	\node [circle,fill,scale=0.25] (6) at (63.970439,18.049731) {};
    	\node [circle,fill,scale=0.25] (5) at (59.816256,33.967445) {};
    	\node [circle,fill,scale=0.25] (4) at (49.910125,41.846414) {};
    	\node [circle,fill,scale=0.25] (3) at (40.063911,34.027361) {};
    	\node [circle,fill,scale=0.25] (2) at (50.039943,11.159379) {};
    	\node [circle,fill,scale=0.25] (1) at (49.930097,29.044338) {};
		\path (41) ++(3.5,0) node {$b$};
		\path (31) ++(-3,1) node {$b'$};
		\path (30) ++(1,4) node {$a$};
		\path (18) ++(3,4) node {$a'$};

		\draw [black] (53) to (47);
		\draw [black] (53) to (52);
		\draw [black] (53) to (49);
		\draw [black] (52) to (46);
		\draw [black] (52) to (51);
		\draw [black] (51) to (50);
		\draw [black] (51) to (44);
		\draw [black] (50) to (43);
		\draw [black] (50) to (49);
		\draw [black] (49) to (39);
		\draw [black] (49) to (40);
		\draw [black] (48) to (36);
		\draw [black] (48) to (39);
		\draw [black] (48) to (37);
		\draw [black] (47) to (35);
		\draw [black] (47) to (36);
		\draw [black] (46) to (35);
		\draw [black] (46) to (45);
		\draw [black] (45) to (34);
		\draw [black] (45) to (44);
		\draw [black] (44) to (33);
		\draw [black] (43) to (32);
		\draw [black] (43) to (33);
		\draw [black] (42) to (31);
		\draw [black] (42) to (40);
		\draw [black] (42) to (32);
		\draw [black] (41) to (29);
		\draw [black] (41) to (31);
		\draw [black] (41) to (30);
		\draw [black] (40) to (29);
		\draw [black] (39) to (28);
		\draw [black] (38) to (27);
		\draw [black] (38) to (37);
		\draw [black] (38) to (28);
		\draw [black] (37) to (25);
		\draw [black] (36) to (24);
		\draw [black] (35) to (23);
		\draw [black] (34) to (22);
		\draw [black] (34) to (23);
		\draw [black] (33) to (22);
		\draw [black] (32) to (21);
		\draw [black] (31) to (20);
		\draw [black] (30) to (18);
		\draw [black] (30) to (19);
		\draw [black] (29) to (17);
		\draw [black] (28) to (16);
		\draw [black] (27) to (26);
		\draw [black] (27) to (15);
		\draw [black] (26) to (14);
		\draw [black] (26) to (25);
		\draw [black] (25) to (13);
		\draw [black] (24) to (12);
		\draw [black] (24) to (13);
		\draw [black] (23) to (12);
		\draw [black] (22) to (11);
		\draw [black] (21) to (10);
		\draw [black] (21) to (11);
		\draw [black] (20) to (10);
		\draw [black] (20) to (19);
		\draw [black] (19) to (9);
		\draw [black] (18) to (17);
		\draw [black] (18) to (8);
		\draw [black] (17) to (7);
		\draw [black] (16) to (15);
		\draw [black] (16) to (7);
		\draw [black] (15) to (6);
		\draw [black] (14) to (5);
		\draw [black] (14) to (6);
		\draw [black] (13) to (5);
		\draw [black] (12) to (4);
		\draw [black] (11) to (4);
		\draw [black] (10) to (3);
		\draw [black] (9) to (8);
		\draw [black] (9) to (3);
		\draw [black] (8) to (2);
		\draw [black] (7) to (2);
		\draw [black] (6) to (2);
		\draw [black] (5) to (1);
		\draw [black] (4) to (1);
		\draw [black] (3) to (1);
		\draw [black] (2) to (1);
		\draw [red, ultra thick] (8) -- (2) -- (1) -- (5) -- (13) -- (25) -- (26) -- (14) -- (6) -- (15) -- (27) -- (38) -- (37) -- (48) -- (39) -- (28) -- (16) -- (7) -- (17) -- (29) -- (41) -- (30) -- (19) -- (20) -- (31) -- (42) -- (40) -- (49) -- (50) -- (51) -- (44) -- (45) -- (34) -- (23) -- (35) -- (46) -- (52) -- (53) -- (47) -- (36) -- (24) -- (12) -- (4) -- (11) -- (22) -- (33) -- (43) -- (32) -- (21) -- (10) -- (3) -- (9) -- (8);
		\begin{pgfonlayer}{bg}
			\path[fill=darkgray] (6.center) -- (14.center) -- (26.center) -- 
			(27.center) -- (15.center);
		\end{pgfonlayer}
		\path[draw=black, very thick] (4) circle[radius=2];
		\path[draw=black, very thick] (21) circle[radius=2];
		\path[draw=black, very thick] (24) circle[radius=2];
		\path[draw=black, very thick] (32) circle[radius=2];
		\path[draw=black, very thick] (36) circle[radius=2];
	\end{tikzpicture}
\end{minipage}
\begin{minipage}{60mm}
\centering
	\begin{tikzpicture}[scale=0.050]
		\definecolor{marked}{rgb}{0.25,0.5,0.25}
		\node [circle,fill,scale=0.25] (53) at (60.944676,88.850605) {};
    	\node [circle,fill,scale=0.25] (52) at (54.184141,85.045931) {};
	    \node [circle,fill,scale=0.25] (51) at (45.855801,85.095862) {};
    	\node [circle,fill,scale=0.25] (50) at (39.105252,88.980423) {};
    	\node [circle,fill,scale=0.25] (49) at (49.999999,99.995004) {};
    	\node [circle,fill,scale=0.25] (48) at (80.976631,75.209704) {};
    	\node [circle,fill,scale=0.25] (47) at (67.315756,78.654880) {};
    	\node [circle,fill,scale=0.25] (46) at (55.702016,75.639102) {};
    	\node [circle,fill,scale=0.25] (45) at (49.950069,70.236666) {};
    	\node [circle,fill,scale=0.25] (44) at (44.248052,75.699019) {};
    	\node [circle,fill,scale=0.25] (43) at (32.664270,78.834628) {};
    	\node [circle,fill,scale=0.25] (42) at (18.943479,75.319549) {};
    	\node [circle,fill,scale=0.25] (41) at (8.867585,48.856599) {};
    	\node [circle,fill,scale=0.25] (40) at (14.649492,85.355499) {};
    	\node [circle,fill,scale=0.25] (39) at (85.350507,85.355499) {};
    	\node [circle,fill,scale=0.25] (38) at (91.082483,49.106250) {};
    	\node [circle,fill,scale=0.25] (37) at (83.453162,60.080885) {};
    	\node [circle,fill,scale=0.25] (36) at (73.127619,72.034150) {};
    	\node [circle,fill,scale=0.25] (35) at (61.493908,70.656079) {};
    	\node [circle,fill,scale=0.25] (34) at (49.930097,60.420409) {};
    	\node [circle,fill,scale=0.25] (33) at (38.426203,70.775911) {};
    	\node [circle,fill,scale=0.25] (32) at (26.792490,72.173954) {};
    	\node [circle,fill,scale=0.25] (31) at (16.436989,60.050927) {};
    	\node [circle,fill,scale=0.25] (30) at (17.245857,35.225684) {};
    	\node [circle,fill,scale=0.25] (29) at (0.000000,49.995005) {};
    	\node [circle,fill,scale=0.25] (28) at (99.999999,49.995005) {};
    	\node [circle,fill,scale=0.25] (27) at (82.704213,35.345516) {};
    	\node [circle,fill,scale=0.25] (26) at (75.444377,37.861992) {};
    	\node [circle,fill,scale=0.25] (25) at (76.043538,50.304572) {};
    	\node [circle,fill,scale=0.25] (24) at (67.126023,57.095065) {};
    	\node [circle,fill,scale=0.25] (23) at (57.339723,59.861192) {};
    	\node [circle,fill,scale=0.25] (22) at (42.520471,59.921108) {};
    	\node [circle,fill,scale=0.25] (21) at (32.684243,57.344715) {};
    	\node [circle,fill,scale=0.25] (20) at (23.816658,50.474334) {};
    	\node [circle,fill,scale=0.25] (19) at (24.465750,37.941880) {};
    	\node [circle,fill,scale=0.25] (18) at (21.679650,20.206712) {};
    	\node [circle,fill,scale=0.25] (17) at (14.649492,14.644497) {};
   		\node [circle,fill,scale=0.25] (16) at (85.350507,14.644497) {};
    	\node [circle,fill,scale=0.25] (15) at (78.440182,20.296584) {};
    	\node [circle,fill,scale=0.25] (14) at (65.897742,28.694827) {};
    	\node [circle,fill,scale=0.25] (13) at (67.585379,48.097662) {};
    	\node [circle,fill,scale=0.25] (12) at (58.298381,51.982223) {};
    	\node [circle,fill,scale=0.25] (11) at (41.521869,52.062111) {};
    	\node [circle,fill,scale=0.25] (10) at (32.254843,48.227480) {};
    	\node [circle,fill,scale=0.25] (9) at (34.072299,28.714800) {};
    	\node [circle,fill,scale=0.25] (8) at (36.059517,18.029758) {};
    	\node [circle,fill,scale=0.25] (7) at (49.999999,0.004995) {};
    	\node [circle,fill,scale=0.25] (6) at (63.970439,18.049731) {};
    	\node [circle,fill,scale=0.25] (5) at (59.816256,33.967445) {};
    	\node [circle,fill,scale=0.25] (4) at (49.910125,41.846414) {};
    	\node [circle,fill,scale=0.25] (3) at (40.063911,34.027361) {};
    	\node [circle,fill,scale=0.25] (2) at (50.039943,11.159379) {};
    	\node [circle,fill,scale=0.25] (1) at (49.930097,29.044338) {};
		\path (41) ++(3.5,0) node {$b$};
		\path (31) ++(-3,1) node {$b'$};
		\path (30) ++(1,4) node {$a$};
		\path (18) ++(3,4) node {$a'$};
		
		\draw [black] (53) to (47);
		\draw [black] (53) to (52);
		\draw [black] (53) to (49);
		\draw [black] (52) to (46);
		\draw [black] (52) to (51);
		\draw [black] (51) to (50);
		\draw [black] (51) to (44);
		\draw [black] (50) to (43);
		\draw [black] (50) to (49);
		\draw [black] (49) to (39);
		\draw [black] (49) to (40);
		\draw [black] (48) to (36);
		\draw [black] (48) to (39);
		\draw [black] (48) to (37);
		\draw [black] (47) to (35);
		\draw [black] (47) to (36);
		\draw [black] (46) to (35);
		\draw [black] (46) to (45);
		\draw [black] (45) to (34);
		\draw [black] (45) to (44);
		\draw [black] (44) to (33);
		\draw [black] (43) to (32);
		\draw [black] (43) to (33);
		\draw [black] (42) to (31);
		\draw [black] (42) to (40);
		\draw [black] (42) to (32);
		\draw [black] (41) to (29);
		\draw [black] (41) to (31);
		\draw [black] (41) to (30);
		\draw [black] (40) to (29);
		\draw [black] (39) to (28);
		\draw [black] (38) to (27);
		\draw [black] (38) to (37);
		\draw [black] (38) to (28);
		\draw [black] (37) to (25);
		\draw [black] (36) to (24);
		\draw [black] (35) to (23);
		\draw [black] (34) to (22);
		\draw [black] (34) to (23);
		\draw [black] (33) to (22);
		\draw [black] (32) to (21);
		\draw [black] (31) to (20);
		\draw [black] (30) to (18);
		\draw [black] (30) to (19);
		\draw [black] (29) to (17);
		\draw [black] (28) to (16);
		\draw [black] (27) to (26);
		\draw [black] (27) to (15);
		\draw [black] (26) to (14);
		\draw [black] (26) to (25);
		\draw [black] (25) to (13);
		\draw [black] (24) to (12);
		\draw [black] (24) to (13);
		\draw [black] (23) to (12);
		\draw [black] (22) to (11);
		\draw [black] (21) to (10);
		\draw [black] (21) to (11);
		\draw [black] (20) to (10);
		\draw [black] (20) to (19);
		\draw [black] (19) to (9);
		\draw [black] (18) to (17);
		\draw [black] (18) to (8);
		\draw [black] (17) to (7);
		\draw [black] (16) to (15);
		\draw [black] (16) to (7);
		\draw [black] (15) to (6);
		\draw [black] (14) to (5);
		\draw [black] (14) to (6);
		\draw [black] (13) to (5);
		\draw [black] (12) to (4);
		\draw [black] (11) to (4);
		\draw [black] (10) to (3);
		\draw [black] (9) to (8);
		\draw [black] (9) to (3);
		\draw [black] (8) to (2);
		\draw [black] (7) to (2);
		\draw [black] (6) to (2);
		\draw [black] (5) to (1);
		\draw [black] (4) to (1);
		\draw [black] (3) to (1);
		\draw [black] (2) to (1);
		\draw [red, ultra thick] (8) -- (2) -- (1) -- (4) -- (11) -- (21) -- (32) -- (43) -- (50) -- (51) -- (52) -- (53) -- (49) -- (39) -- (48) -- (36) -- (47) -- (35) -- (46) -- (45) -- (44) -- (33) -- (22) -- (34) -- (23) -- (12) -- (24) -- (13) -- (5) -- (14) -- (6) -- (15) -- (27) -- (26) -- (25) -- (37) -- (38) -- (28) -- (16) -- (7) -- (17) -- (29) -- (40) -- (42) -- (31) -- (41) -- (30) -- (19) -- (20) -- (10) -- (3) -- (9) -- (8);
		\begin{pgfonlayer}{bg}
			\path[fill=darkgray] (6.center) -- (14.center) -- (26.center) -- 
			(27.center) -- (15.center);
		\end{pgfonlayer}
		\path[draw=black, very thick] (4) circle[radius=2];
		\path[draw=black, very thick] (21) circle[radius=2];
		\path[draw=black, very thick] (24) circle[radius=2];
		\path[draw=black, very thick] (32) circle[radius=2];
		\path[draw=black, very thick] (36) circle[radius=2];
	\end{tikzpicture}
\end{minipage}
	\caption{The planar $K_2$-hypohamiltonian graph $G_{53}$. A face of which the boundary is an extendable $5$-cycle is filled in. All vertices for which the vertex-deleted subgraph is non-hamiltonian are circled. On the left-hand side a hamiltonian cycle of $G_{53} - a'$ not containing $bb'$ and on the right-hand side one which does contain $bb'$ are marked by a thick line.}
	\label{fig:g53}
\end{figure}
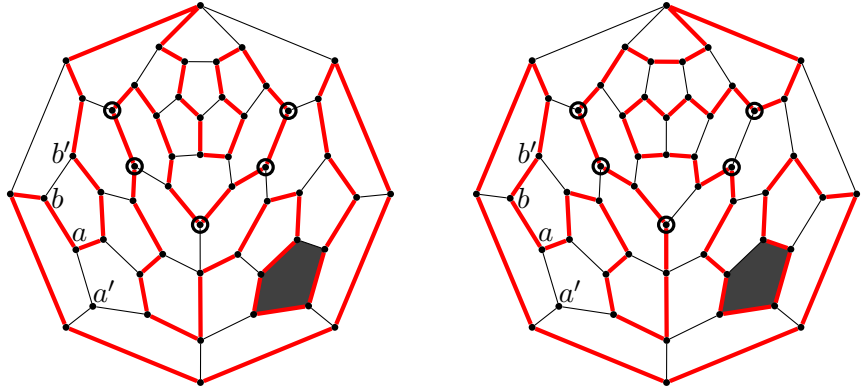
\end{document}